

\documentclass[11pt, letterpaper, oneside]{article}

\headheight=8pt \topmargin=0pt \textheight=624pt \textwidth=432pt
\oddsidemargin=18pt \evensidemargin=18pt

\usepackage{latexsym, amsmath, amssymb, amsfonts, amscd}
\usepackage{lmodern}
\usepackage{nicefrac,mathtools,enumitem}
\usepackage{microtype}
\usepackage{amsthm}
\usepackage{t1enc}
\usepackage[mathscr]{eucal}
\usepackage{indentfirst}
\usepackage{graphicx}
\usepackage{fancyhdr}
\usepackage{fancybox}
\usepackage{psfrag}
\usepackage[all,poly,web,knot]{xy}
\usepackage{epsfig,diagrams}
\usepackage{enumitem}

\theoremstyle{plain}
\newtheorem{thm}{Theorem}[section]

\newtheorem{prop}[thm]{Proposition}
\newtheorem{lemma}[thm]{Lemma}
\newtheorem{cor}[thm]{Corollary}

\newtheorem{assump}[thm]{Assumptions}

\renewcommand{\latticebody}{\drop@{}}

\theoremstyle{definition}
\newtheorem{defi}[thm]{Definition}
\newtheorem{pdef}[thm]{Proposition-Definition}

\theoremstyle{remark}
\newtheorem{remark}[thm]{Remark}

\newcommand{\la}{\leftarrow}
\newcommand{\ra}{\rightarrow}
\newcommand{\lra}{\longrightarrow}
\newcommand{\lla}{\longleftarrow}
\newcommand{\rra}{\Rightarrow}

\newcommand{\thla}{\twoheadleftarrow}
\newcommand{\thra}{\twoheadrightarrow}

\newcommand{\N}{\ensuremath{\mathbb N}}

\newcommand{\R}{\ensuremath{\mathbb R}}

\renewcommand{\k}{\ensuremath{\mathfrak{k}}}

\renewcommand{\d}{d}

\newcommand{\D}{\mathscr D}

\newcommand{\cA}{\mathcal{A}}
\newcommand{\cC}{\mathcal{C}}            

\newcommand{\cX}{\mathcal{X}}
\newcommand{\cY}{\mathcal{Y}}
\newcommand{\cZ}{\mathcal{Z}}
\newcommand{\cG}{\mathcal{G}}
\newcommand{\cH}{\mathcal{H}}
\newcommand{\cT}{\mathcal{T}}             

\newcommand{\bepsilon}{\mbox{\boldmath $\epsilon$}}
\newcommand{\bDelta}{\mbox {\boldmath $\Delta$}}
\newcommand{\equivalence}{hypercover}
\newcommand{\equivalences}{hypercovers}

\newcommand{\Equivalences}{Hypercovers}

\DeclareMathOperator{\pr}{pr} 
\DeclareMathOperator{\id}{id}

\DeclareMathOperator{\Sk}{Sk}
\DeclareMathOperator{\Cosk}{Cosk}

\newcommand{\tG}{\tilde{G}}

\newcommand{\tU}{\tilde{U}}

\newcommand{\teta}{\tilde{\eta}}

\newcommand{\be}{\bar{e}}
\newcommand{\bg}{\bar{g}}
\newcommand{\bareta}{\bar{\eta}}
\newcommand{\bgamma}{\bar{\gamma}}
\newcommand{\bphi}{\bar{\phi}}

\newcommand{\bt}{\mathbf{t}}                  
\newcommand{\bs}{\mathbf{s}}                  
\newcommand{\bbt}{\bar{\mathbf{t}}}           
\newcommand{\bbs}{\bar{\mathbf{s}}}           

\def\U{{\mathcal U}}
\def\L{\Lambda}
\def\D{\Delta}
\newarrow{into} C--->
\newarrow{Eq} =====
\newarrow{dashto} ....>

\def\pD{\partial\D}
\def\lht{$\infty$-groupoid objects in $(\cC, \cT)$}
\def\ngpd{$n$-groupoid objects in $(\cC, \cT)$}

\def\PB(#1,#2,#3,#4){
\left\{\begin{matrix}#1&\!\!\!\stackrel{?}{\longrightarrow}&\!\!\!#2\\
\downarrow&&\!\!\!\downarrow\\
#3&\!\!\!\stackrel{?}{\longrightarrow}&\!\!\!#4\end{matrix}\right\}}

\def\pb(#1,#2,#3,#4){ \hom(#1 \to #3, #2 \to #4)}

\newtheorem{df}[thm]{Definition}


\hyphenation {group-oid group-oids}
\newcommand \Kan {\mathit {Kan}}
\newcommand \Lmor {\mathit {Lmor}}
\newcommand \yon {\mathit {yon}}
\newcommand \Pb {\mathit {PB}}
\newcommand \an {a } 
\renewcommand \id {\mathit {id}}
\renewcommand \pr {\mathit {pr}}
\newcommand \sk {\mathit {sk}}
\renewcommand \exp {\mathit {exp}}
\newcommand \op {\mathit {op}}

\begin{document}

\title{ $n$-groupoids and stacky groupoids}
\author{Chenchang Zhu \thanks{Research partially supported by the Liftoff fellowship 2004 of the Clay
Institute}\\
Mathematisches Institut\\
Bunsenstr. 3-5 D-37073 G\"ottingen Germany \\
  \small{(zhu@uni-math.gwdg.de) }}
\date{\today}

\maketitle

\begin{abstract}
We discuss two generalizations of Lie groupoids. One consists of
Lie $n$-groupoids defined as simplicial manifolds with trivial
$\pi_{k\geq n+1}$. The other consists of stacky Lie groupoids
$\cG\rra M$ with $\cG$ a differentiable stack. We build a $1$--$1$
correspondence between Lie $2$-groupoids and stacky Lie groupoids up
to a certain Morita equivalence. We prove this in a general set-up so
that the statement is valid in both differential and topological
categories.  \Equivalences \ of higher
groupoids in various categories are also described.
\end{abstract}

\noindent KEY WORDS \quad stacks, groupoids, simplicial objects,
Morita equivalence

\tableofcontents

\section{Introduction}

Recently there has been much interest in higher group(oid)s, which
generalize the notion of group(oid)s in various ways. Some of them
turn out to be unavoidable to study problems in differential
geometry. An example comes from the string group, which is a
$3$-connected cover of $Spin(n)$. More generally, to any compact
simply connected group $G$ one can associate its string group
$String_G$. It has various models, given by Stolz and Teichner
\cite{stolz,st} using an infinite-dimensional extension of
$G$, by Brylinski \cite{bry-mc1} using a $U(1)$-gerbe with the
connection over $G$, and recently by Baez et al.
\cite{baez:str-gp} using Lie $2$-groups and Lie $2$-algebras.
Henriques \cite{henriques} constructs the string group as a higher
group that we study in this paper and  as an integration object of
a certain Lie $2$-algebra with an integration procedure which is also
studied in \cite{getzler, s:funny, z:lie2}.

Other examples come from a kind of \'etale stacky groupoid (called a Weinstein
groupoid) \cite{tz} built upon the very important work of \cite{cafe, cf}.
These stacky groupoids are the  global objects  in $1$--$1$ correspondence with  Lie
algebroids. A Lie algebroid can be understood as a degree-$1$ super
manifold with a degree-$1$ homological vector field, or more precisely as
a vector bundle $A\to M$ equipped with a Lie bracket $[, ]$ on the
sections of $A$ and a vector bundle morphism $\rho: A \to TM$,
satisfying a Leibniz rule, 
\[ [X, fY] = f[X, Y] + \rho(X) (f) Y.\]
When the base $M$ is a point, the Lie algebroid becomes a Lie algebra.
Notice that unlike (finite-dimensional) Lie algebras which always have associated Lie groups,
Lie algebroids do not always have  associated Lie groupoids
\cite{am1, am2}. One needs to enter the world of stacky groupoids
to obtain the desired $1$--$1$ correspondence. Since Lie algebroids are
closely related to Poisson geometry, this result applies to
complete the first step of Weinstein's program of quantization of
Poisson manifolds: to associate to Poisson manifolds their
symplectic groupoids \cite{w-poisson, wx}. It turns out that some
``non-integrable'' Poisson manifolds cannot have symplectic (Lie)
groupoids. A solution to this problem is given in \cite{tz2} with the above
result so that every Poisson manifold has a corresponding 
stacky symplectic groupoid.

2-group(oid)s were already studied in the early twentieth
century by Whitehead and his followers under various terms, such
as crossed modules. They are  also studied from the aspect of
``gr-champ'' (i.e. stacky groups) by Breen \cite{breen}. Recently,
various versions of 2-groups, with different strictness, have been
studied by Baez's school (the best thing is to read their
n-category caf\'e on http://golem.ph.utexas.edu/category/). These authors also
study 
a lot of developments on the subjects
surrounding 2-groups such as 2-bundles, 2-connections and the relation
with gerbes.  

It seems that it is required now to have a uniform method to
describe 2-groups so that it opens a way to treat all higher
groupoids. In this paper, we apply a simplicial method to describe all
higher groupoid objects in various categories in an elegant way, and
prove when $n=2$, they are the same as stacky groupoid objects in these category. This idea (set theoretically) was known much
earlier  by Duskin
and Glenn \cite{duskin, glenn}. 
The 0-simplices
correspond to the objects, the 1-simplices correspond to the arrows (or
1-morphisms), and the higher dimensional simplices correspond to the higher
morphisms. This method  becomes much more suitable when dealing with the differential or topological category.

Recall that a simplicial set (respectively manifold) $X$ is made
up of sets (respectively manifolds) $X_n$ and structure maps
\[ d^n_i: X_n \to X_{n-1} \;\text{(face maps)}\quad s^n_i: X_n \to X_{n+1} \; \text{(degeneracy maps)},\;\; \text{for}\;i\in \{0, 1, 2,\dots, n\} \]
that satisfy the coherence conditions
\begin{equation}\label{eq:face-degen}
\begin{split}
 d^{n-1}_i d^{n}_j = d^{n-1}_{j-1} d^n_i \; \text{if}\; i<j, &\quad s^{n}_i s^{n-1}_j = s^{n}_{j+1} s^{n-1}_i \; \text{if}\; i\leq j,
 \\
 d^n_i s^{n-1}_j = s^{n-2}_{j-1} d^{n-1}_i \; \text{if}\; i<j, &\quad
 d^n_j s^{n-1}_j=\id=d^n_{j+1} s^{n-1}_j,\quad
 d^n_i s^{n-1}_j = s^{n-2}_j d^{n-1}_{i-1} \; \text{if}\; i> j+1.
\end{split}
\end{equation}

The first two examples of simplicial sets are the simplicial $m$-simplex $\D[m]$ and
the horn $\L[m,j]$ with
\begin{equation}\label{eq:simplex-horn}
\begin{split}
(\D[m])_n & = \{ f: (0,1,\dots,n) \to (0,1,\dots, m)\mid f(i)\leq
f(j),
\forall i \leq j\}, \\
(\L[m,j])_n & = \{ f\in (\D[m])_n\mid \{0,\dots,j-1,j+1,\dots,m\}
\nsubseteq \{ f(0),\dots, f(n)\} \}.
\end{split}
\end{equation}
In fact the horn $\L[m,j]$ is a simplicial set obtained from the
simplicial $m$-simplex $\D[m]$ by taking away its unique
non-degenerate $m$-simplex as well as the $j$-th of its $m+1$
non-degenerate $(m-1)$-simplices, as in the following picture (in
this paper all the arrows are oriented from bigger numbers to
smaller numbers): \vspace{.6cm}

\centerline{\epsfig{file=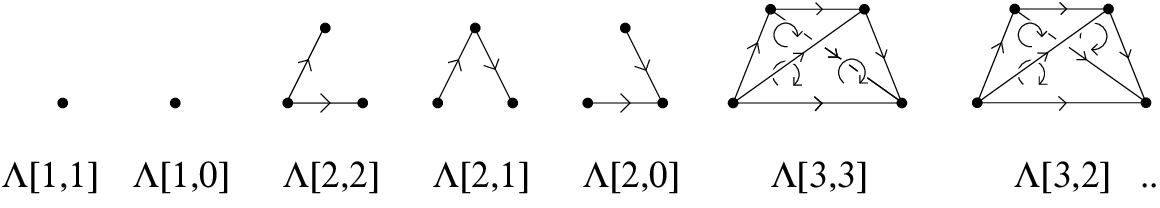,height=1.9cm}}
\vspace{.6cm}

A simplicial set $X$ is \emph {Kan} if any map from the horn
$\L[m,j]$ to $X$ ($m\ge 1$, $j=0,\dots,m$), extends to a map from
$\D[m]$. Let us call $\Kan(m,j)$ the Kan condition for the horn
$\L[m,j]$. A \emph {Kan simplicial set} is therefore a simplicial set
satisfying $\Kan(m,j)$ for all $m\ge 1$ and $0\leq j\leq m$. In the
language of groupoids, the Kan condition corresponds to the
possibility of composing and inverting various morphisms. For example, the
existence of a composition for arrows is given by the condition
$\Kan(2,1)$, whereas the composition of an arrow with the inverse
of another is given by $\Kan(2,0)$ and $\Kan(2,2)$. \vspace{.6cm}

\begin{equation}\label{compo}
\epsfig{file=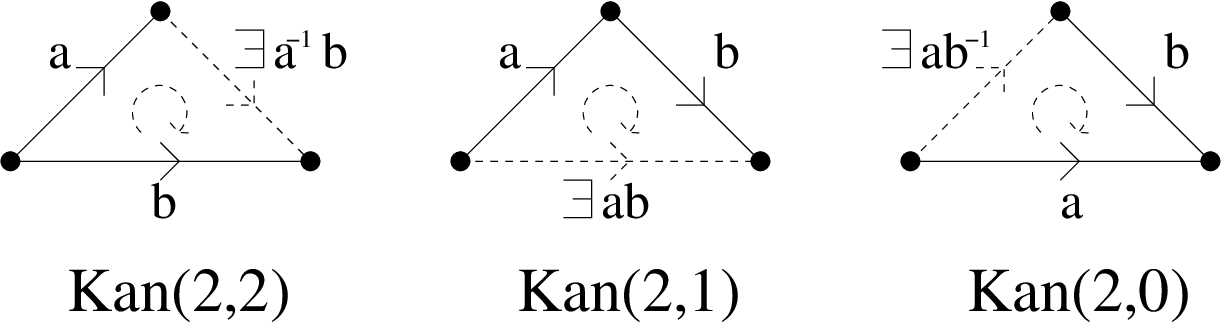,height=2cm}
\end{equation}

Note that the composition of two arrows is in general not unique,
but any two of them can be joined by a $2$-morphism $h$ given by
$\Kan(3,2)$. \vspace{.6cm}

\begin{equation}\label{composition}
\epsfig{file=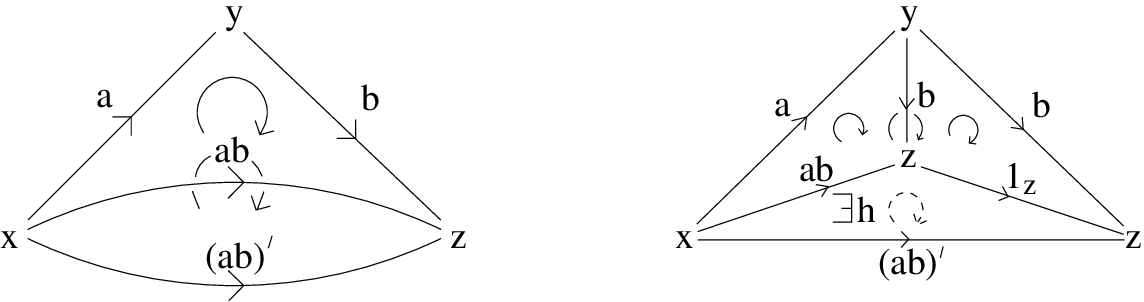,height=2.5cm}
\end{equation}

Here, $h$ ought to be a bigon, but since we do not have any bigons
in a simplicial set, we view it as a triangle with one of its
edges degenerate. The degenerate $1$-simplex above $z$ is denoted
$1_z$.

In an $n$-groupoid, the only well-defined composition law is the
one for $n$-morphisms. This motivates the following definition.

\begin{df}\label{groupoid}
An $n$-groupoid ($n\in\N \cup \infty$) $X$ is a simplicial set
that satisfies $\Kan(m,j)$ for all $0\le j\le m\ge 1$ and
$\Kan!(m,j)$ for all $0\le j\le m>n$, where\\
\noindent
\begin{tabular}{p{1.6cm}p{10cm}}
$\Kan(m,j)$:&  Any map $\L[m,j]\to X$ extends to a map $\D[m]\to X$.\\
$\Kan!(m,j)$: &
Any map $\L[m,j]\to X$ extends to a unique map $\D[m]\to X$.\\
\end{tabular}\\
\end{df}

An \emph {$n$-group} is an $n$-groupoid for which  $X_0$ is a point.
When $n=2$, they are different from the various kinds of
$2$-group(oid)s or double groupoids in \cite{bala:2gp,
br-sp} (see \cite[Appendix]{henriques:l-infty-v1} for an explanation of the
relation between our 2-group and the one in \cite{bala:2gp}), and are  not exactly
the same as in \cite{noohi}, as he requires a choice of composition and strict
units; however, they are the same as in \cite{duskin}.
 A usual groupoid (category with
only isomorphisms) is equivalent to a $1$-groupoid in the sense of
Def.\ \ref{groupoid}. Indeed, from a usual groupoid, one can
form a simplicial set whose $n$-simplices are given by sequences
of $n$ composable arrows. This is a standard construction called
the \emph {nerve} of a groupoid and one can check that it
satisfies the required Kan conditions.

On the other hand, a $1$-groupoid $X$ in the sense of Def.\
\ref{groupoid} gives us a usual groupoid with objects and arrows
given respectively by the $0$-simplices and $1$-simplices of $X$. The
unit is provided by the degeneracy $X_0\to X_1$, the inverse and
composition are given by the Kan conditions $\Kan(2,0)$, $\Kan(2,1)$
and $\Kan(2,2)$ as in (\ref{compo}), and the associativity is given
by $\Kan(3,2)$ and $\Kan!(2,1)$. \vspace{.6cm}

\centerline{\epsfig{file=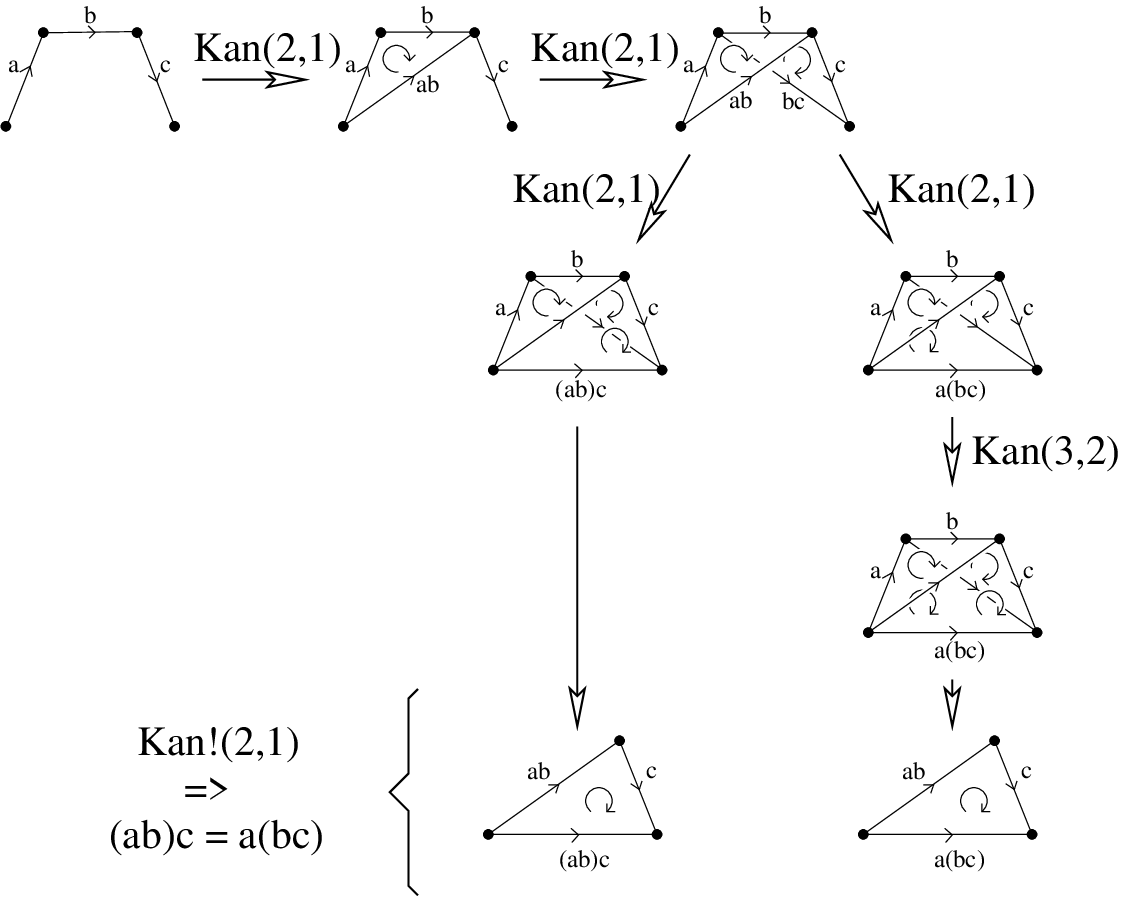,height=9cm}}
\centerline{Proof of associativity.} \vspace{.6cm}

This motivates the corresponding definition in a category $\cC$ with a
singleton Grothendieck pretopology $\cT$ which satisfies some additional mild
assumptions (see Assumptions \ref{assump:1}). We shall assume $\cC$ has
all coproducts.

\begin{defi}
A \emph {singleton Grothendieck pretopology}\footnote{The original
  definition of Grothendieck pretopology \cite{SGA4} requires
a collection of morphisms $U_i\to X$ for a cover. But since we assume that
$\cC$ has coproducts, a Grothendieck pretopology is given by a
singleton Grothendieck pretopology by 
declaring $\{U_i\to X\}$ to be a cover if $\Pi_{i}U_i \to X$ is a
cover. Hence when coproducts exist, these two concepts are the same. } on $\cC$ is a collection of morphisms, called covers, subject the following three axioms:
Isomorphisms are covers. The composition of two covers is a cover. If $U\to X$ is a cover and $Y\to X$ is a morphism, then the pull-back $Y\times_X  U$ exists, and the natural morphism $Y\times_X U \to Y$ is a cover.
\end{defi}

We list examples of categories equipped with singleton Grothendieck
pretopologies in Table \ref{table:1}, among which $(\cC_i, \cT'_i)$
for $i=1,2,3$ 
satisfy Assumptions \ref{assump:1} (with the terminal object $*$
being a point). 

\begin{table} [h!b!p!]
\caption{Categories and pretopologies } \label{table:1}
\begin{tabular}{|l|l|l|} 
\hline
Notation & $\cC$    & cover \\
\hline
$(\cC_1, \cT_1)$ & Banach manifolds\footnotemark\
and smooth morphisms &   surjective \'etale morphisms \\
\hline
$(\cC_1, \cT'_1)$ & Banach manifolds and smooth morphisms &  surjective submersions \\
\hline
$(\cC_2, \cT_2)$ & Topological spaces and continuous morphisms &surjective \'etale morphisms  \\
\hline
$(\cC_2, \cT'_2)$ & Topological spaces and continuous morphisms &   surjective continuous morphisms \\
\hline
$(\cC_3, \cT_3)$ & Affine schemes and smooth morphisms &surjective \'etale morphisms  \\
\hline
$(\cC_3, \cT'_3)$ & Affine schemes and smooth morphisms & surjective
smooth morphisms  \\ \hline
\end{tabular}
\end{table}
\footnotetext{See \cite[Section
  4]{henriques} for the convention on Banach manifolds that we use.}

\begin{df}\cite{henriques}\label{defngroupoids} 
An $n$-groupoid object $X$ ($n\in\N \cup \infty$) in $(\cC, \cT)$ is a simplicial
object in $(\cC, \cT)$ that satisfies $\Kan(m,j)$ for all $0\le j\le m\ge 1$ and
$\Kan!(m,j)$ $0\le j\le m>n$, where\\ \noindent
\begin{tabular}{p{1.6cm}p{10.6cm}}
$\Kan(m,j)$:& The restriction map $\hom(\D[m],X)\to\hom(\L[m,j],X)$
is a cover in $(\cC, \cT)$.\\
$\Kan!(m,j)$:& The restriction map
$\hom(\D[m],X)\to\hom(\L[m,j],X)$ is an isomorphism in $\cC$.\\
\end{tabular} \\
\end{df}

The notation $\hom(S, X)$, when $S$ is a simplicial set and $X$ is a
simplicial object in $\cC$, has the same meaning as in \cite[Section
2]{henriques}; in the case of a {\bf Lie $n$-groupoid} \cite[Def.\
1.2]{henriques}, which is an $n$-groupoid object in $(\cC, \cT)= (\cC_1,
\cT'_1)$, we can view simplicial sets $\D[m]$ and $\L[m,j]$ as
simplicial manifolds with their discrete topology so that $\hom(S,X)$
denotes the set of homomorphisms of simplicial manifolds with its
natural topology. Thus $\hom(\D[m],X)$ is just another name for $X_m$.
However it is not obvious that $\hom(\L[m,j],X)$ is still an object in
$\cC$, and it is a result of \cite[Corollary 2.5]{henriques} (see
Section \ref{sect:lie-n-gpd} for details). Moreover, \emph {a Lie
  $n$-group} is a Lie $n$-groupoid $X$ where $X_0=pt$.

On the other hand, a \emph {stacky Lie (SLie) groupoid} $\cG \rra
M$, following the concept of Weinstein (W-) groupoid in \cite{tz},
is a groupoid object in the world of differentiable stacks with
its base $M$ an honest manifold. When $\cG$ is also a manifold, $\cG \rra M$
is obviously a Lie groupoid.  \emph {W-groupoids}, which are \emph
{\'etale SLie} groupoids, provide a way to build the $1$--$1$
correspondence with Lie algebroids. This concept can be also
adapted to stacky groupoids in various categories (see Def.\ \ref{def:sliegpd}). 

Given these two  higher generalizations of Lie groupoids, Lie
$n$-groupoids and SLie groupoids, arising from different
motivations and constructions, we ask the following questions:
\begin{itemize}
\item Are SLie groupoids the same as Lie $n$-groupoids for some $n$?
\item If not exactly, to which extent they are the same?
\item Is there a way to also realize Lie $n$-groupoids as
integration objects of Lie algebroids?
\end{itemize}

In this paper, we answer the two first questions by

\begin{thm} \label{2-w}
There is a one-to-one correspondence between SLie (respectively
W-) groupoids and Lie $2$-groupoids (respectively Lie $2$-groupoids
whose $X_2$ is \'etale over $\hom(\L[2,j],X)$) modulo $1$-Morita
equivalences\footnote{Morita equivalences preserving $X_0$} of Lie
$2$-groupoids.
\end{thm}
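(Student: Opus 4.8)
The plan is to set up a functor in each direction between the category of Lie $2$-groupoids (localized at $1$-Morita equivalences) and the category of SLie groupoids, and then show these functors are mutually inverse. The central idea is that a differentiable stack is encoded by a groupoid presentation up to Morita equivalence, so a stacky groupoid $\cG\rra M$ ought to unfold into a simplicial object once we replace $\cG$ by its nerve-like resolution.

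First I would construct the passage from a Lie $2$-groupoid $X$ to an SLie groupoid. The key observation is that for a $2$-groupoid the simplicial manifold is determined by its low-degree data: by $\Kan!(m,j)$ for $m>2$, the objects $X_m$ for $m\geq 3$ are canonically isomorphic to the appropriate horn spaces, so the essential data is $X_0, X_1, X_2$ together with the face and degeneracy maps among them. I would take $M := X_0$ as the base, and build $\cG$ as the differentiable stack presented by the groupoid $X_2 \rightrightarrows X_1$ (with structure maps extracted from the simplicial face maps, using the $\Kan(2,j)$ and $\Kan(3,j)$ conditions to produce the groupoid multiplication and inverse on this presentation). The source and target $\cG\rra M$ come from $d_1,d_0 : X_1\to X_0$, the unit from the degeneracy $s_0:X_0\to X_1$, and the stacky multiplication from the $\Kan(2,1)$-composition together with the $\Kan(3,j)$ conditions that witness its associativity up to coherent $2$-morphisms. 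The étale refinement (W-groupoids) corresponds exactly to requiring $X_2$ étale over $\hom(\L[2,j],X)$, which makes the presenting groupoid $X_2\rightrightarrows X_1$ étale and hence $\cG$ an étale stack.

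Next I would construct the reverse passage, from an SLie groupoid $\cG\rra M$ to a Lie $2$-groupoid. I would choose an atlas $X_1\to\cG$ presenting the stack $\cG$ by a groupoid $X_2\rightrightarrows X_1$, set $X_0:=M$, and then assemble a simplicial manifold by taking $X_n$ for $n\geq 3$ to be the iterated fiber products forced by coskeletality (equivalently by $\Kan!(m,j)$ for $m>2$). I would verify that the groupoid structure on $\cG\rra M$ — source, target, unit, multiplication, and the associativity/unit coherence $2$-morphisms inherent in a stacky groupoid — translate precisely into the $\Kan(2,j)$ and $\Kan(3,j)$ conditions, so that the result is genuinely a $2$-groupoid object in $(\cC,\cT)$. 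Changing the atlas $X_1\to\cG$ replaces $X$ by a $1$-Morita-equivalent $2$-groupoid (the base $X_0=M$ is preserved since we never change $M$), which is exactly why the correspondence is only claimed modulo $1$-Morita equivalence.

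Finally I would check that the two constructions are mutually inverse up to the relevant equivalences: starting from $X$, presenting the associated stack and reassembling recovers a $2$-groupoid $1$-Morita equivalent to $X$; starting from $\cG\rra M$, unfolding and then restacking recovers $\cG\rra M$ up to isomorphism of stacky groupoids, independent of the atlas chosen. I expect the main obstacle to be the bookkeeping that translates the stacky coherence data (the associator $2$-isomorphism and its pentagon/unit axioms for $\cG\rra M$) into the simplicial Kan data in degree three, and conversely showing that the $\Kan(3,j)$ conditions supply exactly a well-defined, coherently associative multiplication on the stack with no leftover higher obstruction — this is precisely the point where $n=2$ is essential, since $\Kan!(m,j)$ for $m>2$ must kill all higher homotopy so that the stacky groupoid sees only a $2$-truncated object. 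Carrying this out cleanly in the general $(\cC,\cT)$ set-up, rather than just for manifolds, is what makes the argument delicate, and it is where the singleton-pretopology axioms and the cover/isomorphism distinction between $\Kan$ and $\Kan!$ do the real work.
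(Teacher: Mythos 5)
Your overall strategy (two constructions, checked to be mutually inverse up to $1$-Morita equivalence) matches the paper's, but the central structural identification is wrong, and it is the one point on which the whole proof turns. You propose that $\cG$ be presented by the groupoid ``$X_2\rightrightarrows X_1$'' (and, conversely, that $X_2$ be the arrow space of a presenting groupoid of $\cG$). But $X_2$ carries \emph{three} face maps to $X_1$ and is not a groupoid over $X_1$; already for the nerve of an ordinary Lie groupoid $G\rra M$ one has $X_2=G\times_M G$, which does not present the stack $G$. In the paper the roles are: $X_1=G_0$ is a \emph{chart} of $\cG$, $X_2=E_m$ is the Hilsum--Skandalis bibundle presenting the multiplication $m:\cG\times_M\cG\to\cG$ (with $d_0,d_2$ the two components of $J_l:E_m\to G_0\times_M G_0$ and $d_1=J_r$), and the presenting groupoid is recovered only as the space of bigons $G_1=d_2^{-1}(s_0(X_0))\subset X_2$. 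This is forced by $\Kan(2,1)$, which demands a cover $X_2\thra X_1\times_{X_0}X_1$: it is $J_l$ on $E_m$, not the source--target map of $G_1$, that surjects onto all composable pairs of ``arrows'' $G_0\times_M G_0$. With your identification the $2$-Kan conditions fail and the associated stack is not $\cG$.

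Because of this, the actual content of the proof is missing from your outline. Concretely: (i) one must prove that $E_m\to\hom(\L[2,j],X)$ is a cover for $j=0,2$ as well, which the paper does by showing $(g_1,g_2)\mapsto(g_1g_2,g_2)$ is an isomorphism of stacks (Lemma \ref{lemma:kan22}); (ii) the associator $2$-morphism $a$ must be converted into the $3$-multiplications $m_i:\L(X)_{3,i}\to X_2$ and the cube/pentagon axiom into their associativity (and back), which is a nontrivial diagram chase on quotient bibundles; (iii) the degeneracy $s_0:X_0\to X_1$ requires the identity $\bar e:M\to\cG$ to lift to an embedding $e:M\to G_0$ into some chart --- the existence of such ``good charts'' is a separate geometric theorem (Section \ref{sec:embedding}) without which the construction cannot begin, and it is exactly why the statement is phrased modulo $1$-Morita equivalence rather than Morita equivalence; (iv) the inverse axioms must be shown equivalent to $E_m\times_{J_r,G_0,e}M$ being a Morita bibundle from $G$ to $G^{\op}$ (Section \ref{sec:inverse}), which is what produces $\Kan(3,0)$ and $\Kan!(3,0)$. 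None of these steps is routine bookkeeping, and the first one cannot even be set up until $X_2$ is correctly identified with $E_m$.
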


The last question will be answered positively in a future work \cite{z:lie2}:

\begin{thm}\label{thm:2-a}
Let $A$ be a Lie algebroid and let $\Lmor(-,-)$ be the space of Lie
algebroid homomorphisms satisfying suitable boundary conditions.
Then
\[ \Lmor(T\Delta^2, A)/ \Lmor(T\D^3 , A) \Rrightarrow \Lmor(T\Delta^1, A) \rra
\Lmor(T\Delta^0, A), \] is a Lie $2$-groupoid corresponding to the
W-groupoid $\cG(A)$ constructed in \cite{tz} under the
correspondence in the above theorem.
\end{thm}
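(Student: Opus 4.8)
The plan is to realize the displayed object as the $2$-truncation of the full simplicial object $\Lmor(T\Delta^\bullet, A)$, where $\Delta^\bullet$ is the cosimplicial manifold of standard simplices and the face and degeneracy maps are induced by its cosimplicial structure (restriction of a Lie algebroid morphism to a face, and pullback along a degeneracy). First I would pin down precisely the boundary conditions defining each $\Lmor(T\Delta^m, A)$ --- $A$-paths with fixed endpoints at level $1$, $A$-homotopies with fixed boundary at level $2$, and so on, following the Crainic--Fernandes picture of \cite{cf} --- and check that each $\Lmor(T\Delta^m, A)$ is a Banach manifold in $(\cC_1, \cT'_1)$ with smooth structure maps. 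Setting $X_0 = \Lmor(T\Delta^0, A) = M$, $X_1 = \Lmor(T\Delta^1, A)$, and $X_2 = \Lmor(T\Delta^2, A)/\Lmor(T\Delta^3, A)$, the coherence identities \eqref{eq:face-degen} descend from the cosimplicial identities of $\Delta^\bullet$, and above level $2$ one takes the $2$-coskeleton so that the structure is determined by $X_0, X_1, X_2$.

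The analytic heart is to prove that $X_2$ is a manifold and that the Kan conditions of Definition \ref{defngroupoids} hold. For $\Kan(m,j)$ I would construct horn-fillers geometrically: a map $\L[m,j]\to X$ is a compatible family of $A$-paths and $A$-homotopies on all faces of the horn, and one extends it over $\Delta^m$ by integrating the time-dependent section of $A$ prescribed by the horn, i.e.\ by the $A$-exponential flow of a connection on $A$. This yields a canonical filler depending smoothly and submersively on the horn data, giving the surjective submersions required by $\Kan(m,j)$. For $\Kan!(m,j)$ with $m>2$ the $2$-coskeletal structure reduces everything to $\Kan!(3,j)$, which states that a $3$-dimensional $A$-homotopy is determined by its boundary modulo $4$-homotopy; this rigidity is exactly what the quotient by $\Lmor(T\Delta^3, A)$ installs. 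The crucial point, and the main obstacle, is that $X_2 = \Lmor(T\Delta^2, A)/\Lmor(T\Delta^3, A)$ is a genuine Banach manifold: the $\Lmor(T\Delta^3, A)$-action on $A$-homotopies must be free and proper with smooth quotient. This is precisely where the non-integrability of $A$ would obstruct an honest Lie groupoid, and the content is that passing from level $1$ to level $2$ relocates the monodromy obstruction into the discrete fibers of the filling map, so that the level-$2$ quotient stays smooth even when $\cG(A)$ fails to be a manifold.

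With the Lie $2$-groupoid established, I would identify its image under Theorem \ref{2-w}. That correspondence sends $X_\bullet$ to a stacky groupoid $\cG \rra X_0$ whose arrow stack is presented by the groupoid of bigons, $\{\text{$2$-simplices with one degenerate edge}\} \rightrightarrows X_1$; here this is $[P(A)/\text{$A$-homotopy}]$ with $P(A)=X_1$ the space of $A$-paths, which is exactly the Weinstein groupoid $\cG(A)$ of \cite{tz}. I would then check that source, target, unit, and multiplication match: the source and target are the two face maps $X_1\to X_0$ evaluating the endpoints of an $A$-path, the unit is the degeneracy $s_0\colon X_0\to X_1$, and the multiplication is read off from the $\Kan(2,1)$ filler, all reproducing the concatenation of $A$-paths that defines $\cG(A)$. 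Finally, to land in the \'etale (W-groupoid) case of Theorem \ref{2-w}, I would verify that the filling map $X_2\to\hom(\L[2,j],X)$ is \'etale --- its fiber over a horn is the torsor of $A$-homotopies rel boundary modulo $3$-homotopy, which is discrete (the monodromy group) --- matching both the \'etale condition singled out in the statement and the \'etale property of W-groupoids in \cite{tz}.
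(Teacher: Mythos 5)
The first thing to note is that the paper does not actually prove Theorem \ref{thm:2-a}: it is stated in the introduction with the explicit remark that the question ``will be answered positively in a future work \cite{z:lie2}'', and no argument for it appears anywhere in the present text. So there is no in-paper proof to measure your proposal against; all one can check is whether your sketch is consistent with the framework the paper does establish (Theorem \ref{2-w}, the bigon construction of Section 4.2, and the W-groupoid $\cG(A)$ of \cite{tz}). At that level your outline points in the right direction: taking $X_0=M$, $X_1=\Lmor(T\Delta^1,A)$ the space of $A$-paths, $X_2$ the $2$-morphisms modulo $3$-morphisms, recovering $\cG(A)$ as the stack presented by the groupoid of bigons $d_2^{-1}(s_0(X_0))\rra X_1$, and locating the \'etaleness of $X_2\to\hom(\L[2,j],X)$ in the discreteness of the monodromy is exactly how the correspondence of Theorem \ref{2-w} is meant to specialize here.

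The genuine gap is in the step you yourself call ``the analytic heart''. You phrase the smoothness of $X_2=\Lmor(T\Delta^2,A)/\Lmor(T\D^3,A)$ as requiring a ``free and proper $\Lmor(T\Delta^3,A)$-action with smooth quotient'', but $\Lmor(T\Delta^3,A)$ is not a group and does not act on $\Lmor(T\Delta^2,A)$; the quotient is by the equivalence relation ``cobounding a $3$-dimensional algebroid morphism rel boundary'', i.e.\ the higher analogue of the $A$-homotopy foliation of \cite{cf}. Showing that this relation is regular enough for the quotient to be a Banach manifold is precisely the content deferred to \cite{z:lie2}, and asserting it does not close the argument: the analogous quotient one level down, $\Lmor(T\Delta^1,A)/\Lmor(T\Delta^2,A)$, is the source-simply-connected groupoid of Crainic--Fernandes and fails to be a manifold exactly when $A$ is non-integrable, so some genuinely new input is needed to explain why the level-$2$ quotient behaves better rather than inheriting the same pathology. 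Likewise your horn fillers ``by the $A$-exponential flow of a connection'' must be shown to descend to the quotient $X_2$ and to be submersions there, which again rests on the unproved smooth structure. So the proposal is a reasonable road map consistent with the paper's correspondence, but it does not constitute a proof, and there is nothing in this paper against which it could be compared.
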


With a mild assumption about ``good charts'', we are able to prove a
stronger version of Theorem \ref{2-w} in various other categories, such as
topological categories (see
Theorem \ref{thm:1-1}). If we view a manifold as a set with additional
structure, then we can view our SLie groupoid $\cG \rra
M$ as a groupoid where the space $\cG$ of arrows is itself a category
with certain additional structure. From
this viewpoint,  our result is the
analogue in geometry of Duskin's result \cite{duskin2} in category
theory. Moreover, our stacks are required to be presentable by certain
charts in $\cC$. For example, when $(\cC, \cT) = (\cC_1, \cT'_1)$ the
differential category, our stacks are \emph {not} just categories
fibred in groupoids over $\cC_1$,  but furthermore can be
presented by Lie groupoids. They are called differentiable
stacks. Hence to prove our result, we use the equivalence of the $2$-category of differentiable
stacks, morphisms and $2$-morphisms and the $2$-category of Lie groupoids,
Hilsum--Skandalis (H.S.) bibundles  \cite{hs, m:bibundle}  and $2$-morphisms. This can be viewed as an
enrichment of Duskin's set-theoretical method. Then of course, this
enrichment requires a different approach and solutions of many
technical issues in geometry and topology that we prepare in Section \ref{sect:lie-n-gpd} and \ref{sect:sgpd}.  

Furthermore, a subtle point in the theory of stacks and groupoids is
that a stack can be presented by many Morita equivalent groupoids. Hence, for Theorem \ref{2-w} and \ref{thm:1-1}, we also develop the theory of \emph {morphisms} and
\emph {Morita
equivalence} of $n$-groupoids,  which  is expected to be useful in
the theory of $n$-stacks and $n$-gerbes and should correspond to
Morita equivalence of stacky groupoids in \cite{bz} when $n=2$.

The reader's first guess about the morphisms of \ngpd \  is probably
that a morphism $f:X\to Y$  ought to be a simplicial morphism, namely a collection of morphisms  $f_n:X_n\to Y_n$ in $\cC$ that commute with faces and
degeneracies. In the language of categories, this is just a
natural transformation from the functor $X$ to the functor $Y$. We
shall call such a natural transformation a \emph {strict map} from
$X$ to $Y$. Unfortunately, it is known that, already in the case
of usual Lie groupoids, such strict notions are not good enough.
Indeed there are strict maps that are not invertible even though
they ought to be isomorphisms. That's why people introduced the
notion of H.S. bibundles. Here is an
example of such a map: consider a manifold $M$ with an open cover
$\{\U_\alpha\}$. The simplicial manifold $X$ with
$X_n=\bigsqcup_{\alpha_1,\ldots,\alpha_n}\U_{\alpha_1}\cap\cdots\cap\U_{\alpha_n}$
maps naturally to the constant simplicial manifold $M$. All the
fibers of that map are simplices, in particular they are
contractible simplicial sets. Nevertheless, that map has no
inverse. 

The second guess is then to define a special class of strict maps
which we shall call \emph {\equivalences}. A map from $X$ to $Y$ would
then be a \emph {zig-zag} of strict maps
$X\stackrel{\sim}{\leftarrow}Z\to Y$, where the map $Z\stackrel{\sim}{\to} X$ is one of
these \equivalences. This will be equivalent to bibundle approach. The
notion of \equivalence \ is nevertheless very
useful (e.g., to define sheaf cohomology of \ngpd) and we study it
in Section \ref{sec:equi-lht}.

We also find some technical improvements of the concept of SLie
groupoid: it turns out that an SLie groupoid $\cG\rra M$ always
has a ``good groupoid presentation'' $G$ of $\cG$, which possesses
a strict groupoid map $M\to G$. Moreover the condition on the
inverse map can be simplified.\\
\\
\noindent {\bf Notation Chart}\\
$\cG \rra M$, $\cH \rra N$:  stacky groupoids; \\
$\bbs$, $\bbt$, $\bar{e}$, $\bar{i}$, $m$:  source,
target, identity, inverse  and multiplication of a stacky groupoid; \\
$G:=G_1\rra G_0$: a groupoid presentation of $\cG$; \\ 
$\bs_G$, $\bt_G$, $e_G$, $i_G$:   the source,
target, identity and inverse of the groupoid $G$ respectively; \\ 
$\bs, \bt: G_0 \to M$: the morphisms presenting $\bbs,\bbt: \cG \to M$
respectively; \\
$\eta_i$, $\gamma_i$: the face facing the vertex $i$, moreover
$\gamma_i$ belongs to $G_1$;\\
$\eta_{ijk}$, $\gamma_{ijk}$: the face with vertices $i$, $j$ and $k$,
moreover $\gamma_{ijk}$ belongs to $G_1$;\\
$J_l$, $J_r$: the left and the right moment maps\footnote{They are called
moment maps for the following reason: when we have a Hamiltonian action of a
Lie group $K$ on a symplectic manifold $E$ with a moment map $J: E
\to \k^*$, then the Lie groupoid $T^*
K \rra \k$ acts on $E$ with the help of the map $E \xrightarrow{J} \k^* $;  then
this result was generalized to any (symplectic) groupoid action in
\cite{mw} keeping the name ``moment map''.}  of an H.S. bibundle $E$
between two groupoid objects $K_1 \rra K_0$ and $K'_1 \rra K'_0$,  
\[
\xymatrix{ 
K_1 \ar[d] \ar@<-1ex>[d] & E \ar[dl]_{J_l} \ar[dr]^{J_r} & K'_1 \ar[d]
\ar@<-1ex>[d] \\
K_0 & & K'_0.
}
\]

\noindent {\bf Acknowledgments:} Here I would like to thank
Henrique Bursztyn and Alan Weinstein
for their hosting and very helpful discussions. I also thank Laurent
Bartholdi, Marco Zambon and Toby Bartels for many editing suggestions. I especially thank Andr\'e Henriques who pointed out to me the
potential correspondence of stacky groupoids and Lie $2$-groupoids
during the conference of ``Groupoids and Stacks in Physics and
Geometry'' in CIRM-Luminy 2004. I owe a lot to discussions with
Andr\'e. He contributed the exact definitions of  Lie $n$-groupoids
and their \equivalences,  and also nice pictures! Also I thank Ezra
Getzler very much for telling me the relation of his work
\cite{getzler},  $n$-groupoids and our work \cite{tz} during my trip
to Northwestern and his
continuous comments to this work later on. I thank John Baez and
Toby Bartels for discussions on Grothendieck pretopologies.  Finally, I thank the
referee a lot for much helpful advice.

\section{$n$-groupoid objects and morphisms in various categories} \label{sect:lie-n-gpd}

Lie groupoids and topological groupoids have been studied a lot
(see \cite{cw} for details). They are used to study foliations,
and more recently orbifolds, differentiable stacks and topological stacks
\cite{m-orbi, bx1,  noohi, gep-hen}. Here we will try to convince the reader
that it is fruitful to consider them within the context of
$n$-groupoid objects (Def.\ \ref{defngroupoids}), especially if one wants
to define and use sheaf cohomology. 

Our $n$-groupoid objects live in a category $\cC$ with a singleton Grothendieck
pretopology $\cT$ satisfying the following properties:
\begin{assump}\label{assump:1}
The category $\cC$ has a terminal object $*$, and for any object $X\in \cC$, the map $X\to *$ is a cover. 

The pretopology $\cT$ is subcanonical, which means that all the representable functors $T \mapsto \hom(T, X)$ are sheaves. 
\end{assump} 
\begin{remark} These properties are (\emph {only}) a part of Assumptions 2.2 in \cite{henriques}. It turns out that we do not need all the assumptions if we do not deal with further subjects, such as simplicial homotopy groups.
\end{remark}

As in \cite[Section 2]{henriques}, we sometimes talk about the limit of a diagram in $\cC$, before knowing its existence. For this purpose, we use the Yoneda functor
\[ 
\begin{split} 
\yon: \cC & \to \{ \text{Sheaves on $\cC$} \} \\ 
 X & \mapsto (T\mapsto \hom(T, X))
\end{split}
\]
to embed $\cC$ to the category of sheaves on $\cC$. Hence a limit of
objects of $\cC$ can always be viewed as the limit of the
corresponding representable sheaves using $\yon$. The limit sheaf is
representable if and only if the original diagram has a limit in
$\cC$.

\subsection{\Equivalences \ of $n$-groupoid objects} \label{sec:equi-lht}

First let us fix some notation of pull-back spaces of the form
$\Pb\big(\hom(A,Z)\to\hom(A,X)\leftarrow\hom(B,X)\big)$, where the
maps are induced by some fixed maps $A\to B$ and $Z\to X$. To
avoid the cumbersome pull-back notation, we shall denote these
spaces by
\[\PB(A,Z,B,X) \; \text{in the layout,} \quad \text{or} \; \hom(A\to B, Z\to X) \; \text{in the text.}\]
This notation indicates that the space parameterizes all commuting
diagrams of the form
$$\begin{matrix}A&\!\!\!\longrightarrow&\!\!\!Z\\
\downarrow&&\!\!\!\downarrow\\
B&\!\!\!\longrightarrow&\!\!\!X,\end{matrix}$$ where we allow the
horizontal arrows to vary but we fix the vertical ones.

\Equivalences \ of \ngpd \ are very much inspired by
hypercovers of \'etale simplicial objects \cite{SGA4, friedlander}  and by
Quillen's trivial fibrations for simplicial sets\footnote{In fact, $\infty$-groupoid objects in $(\cC, \cT)$ are called
\emph {Kan  simplicial objects} in $(\cC, \cT)$ \cite[Section
2]{henriques}.}  \cite{quillen:ha}.

\begin{defi}\label{defequivalence}
A strict map $f:Z\to X$ of \ngpd \ is \an \emph {\equivalence} if the natural
map from $Z_k=\hom(\D^k,Z)$ to the pull-back
\[\pb(\pD[k],Z, \D[k], X) =\Pb(\hom(
\partial \Delta[k] , Z)\ra \hom(\partial \Delta[k], X) \la X_k)\]
is a cover for $0\le k\le n-1$ and an isomorphism\footnote{When $n=\infty$, namely in the case of \lht, the requirement of isomorphism is empty.} for $k=n$. 
\end{defi}

But in our case, we need  Lemma
\ref{lemma:indct-mfd-equi} to \emph {justify} that $\pb(\pD[k],Z, \D[k], X)$
is an object in $\cC$ for $1\le k $ so that this definition makes
sense.  This is specially surprising since the spaces
$\hom(\pD[m],Z)$ need not be in $\cC$ (for example take $n=2$, $\cC$  the category of Banach manifolds, and
$Z$ the cross product Lie groupoid associated to the action of $S^1$
on $\R^2$ by rotation around the origin).  To simplify our notation, $\thra$ and $\thla$
always denote covers in $\cT$.

\begin{lemma}\label{lemma:indct-mfd-equi}
Let $S$ be a finite collapsible simplicial set\footnote{See
  \cite[Section 2]{henriques}.} of any dimension, and $T \mathrel{(\hookrightarrow} S)$ a
sub-simplicial set of dimension $\le m$. Let $f:Z\to X$ be a
strict map of \lht \  such that $\hom(\pD[l] \to \D[l], Z\to X) \in \cC$ for all $l \le m$ and the natural map\footnote{Since $Z_l=\hom(\D[l], Z)$ maps naturally to $ \hom(\D[l], X)$ and $ \hom(\pD[l],Z)$, there is a natural map from $Z_l$ to their fibre product $\pb(\pD[l], Z, \D[l], X)$.}
\[
Z_l\rightarrow\PB(\pD[l],Z,\D[l],X)
\]
is a cover for all $l\le m$. Then the pull-back
$\pb(T,Z,S,X)$ exists in $\cC$. Hence in particular, $\pb(\pD[m+1], Z,
\D[m+1], X)$ exists in $\cC$.
\end{lemma}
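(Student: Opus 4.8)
The plan is to fix the collapsible $S$ throughout and induct by peeling off the non-degenerate simplices of $T$ one at a time, \emph{never} shrinking $S$. The reason for keeping $S$ fixed is that $\hom(S,X)$ is the only absolute mapping space I can control: as the example after Definition \ref{defequivalence} shows, a space such as $\hom(\pD[d],Z)$ (or $\hom(T,Z)$) need not lie in $\cC$, so the induction must never isolate it. Instead I keep the $Z$-datum tied to the $X$-datum and route everything through the \emph{relative} spaces $\pb(\pD[d],Z,\D[d],X)$, which lie in $\cC$ for $d\le m$ by hypothesis, and through the covers $Z_d\thra\pb(\pD[d],Z,\D[d],X)$, also supplied by hypothesis for $d\le m$. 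Since $\dim T\le m$, every non-degenerate simplex of $T$ has dimension $\le m$, so these hypotheses are always available at each peel.

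For the base case $T=\emptyset$ one has $\pb(\emptyset,Z,S,X)=\hom(S,X)$, and this lies in $\cC$ since $S$ is collapsible, by the argument of \cite[Section 2]{henriques} (of which \cite[Corollary 2.5]{henriques}, giving $\hom(\L[k,j],X)\in\cC$, is the special case for horns). Concretely, writing $S$ as built from a point by successively filling horns $\L[k,j]\hookrightarrow\D[k]$ exhibits $\hom(S,X)$ as an iterated pullback of the Kan covers $\hom(\D[k],X)\thra\hom(\L[k,j],X)$ over bases in $\cC$, using that horns are themselves collapsible so that $\hom(\L[k,j],X)\in\cC$.

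For the inductive step, assume $T\neq\emptyset$ and choose a non-degenerate simplex $\sigma$ of $T$ that is \emph{maximal}, i.e. a proper face of no other non-degenerate simplex of $T$; set $d=\dim\sigma\le m$ and $T'=T\setminus\{\sigma\}$. Then $T'$ is a sub-simplicial set of $S$ of dimension $\le m$ with one fewer non-degenerate simplex, so $\pb(T',Z,S,X)\in\cC$ by induction, and $T=T'\cup_{\pD[d]}\D[d]$. On points, a compatible pair $(\phi\colon T\to Z,\ \psi\colon S\to X)$ amounts to the pair $(\phi|_{T'},\psi)\in\pb(T',Z,S,X)$ together with the simplex $\phi(\sigma)\in Z_d$, subject to $\phi(\sigma)|_{\pD[d]}=\phi|_{\partial\sigma}$ and $f(\phi(\sigma))=\psi(\sigma)$. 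Restriction to $\partial\sigma$ and evaluation at $\sigma$ assemble into a morphism $\pb(T',Z,S,X)\to\pb(\pD[d],Z,\D[d],X)$ (a morphism in $\cC$ because $\cT$ is subcanonical), and the extra datum $\phi(\sigma)$ is precisely a lift along the natural map $Z_d\to\pb(\pD[d],Z,\D[d],X)$; hence
\[
\pb(T,Z,S,X)\;\cong\;\pb(T',Z,S,X)\times_{\pb(\pD[d],Z,\D[d],X)}Z_d .
\]
Since $\pb(\pD[d],Z,\D[d],X)\in\cC$ and $Z_d\thra\pb(\pD[d],Z,\D[d],X)$ is a cover (both for $d\le m$), the pretopology axiom yields this fibre product in $\cC$, as a cover over $\pb(T',Z,S,X)$, closing the induction.

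The whole argument rests on this single displayed identity, so the real care is twofold: (i) verifying it as an isomorphism of representable sheaves — a routine functor-of-points check that uses maximality of $\sigma$ to guarantee the pushout $T=T'\cup_{\pD[d]}\D[d]$; and (ii) ensuring that at each invocation of the pretopology axiom every object in sight is already known to be in $\cC$, which is exactly what the bookkeeping secures, since I only ever pull back the relative covers $Z_d\thra\pb(\pD[d],Z,\D[d],X)$ over bases previously placed in $\cC$, and never require $\hom(T,Z)$ or $\hom(\pD[d],Z)$ to be in $\cC$. I expect step (ii) to be the genuine obstacle to state cleanly, precisely because the relative spaces, not the absolute ones, are the only ones guaranteed to exist. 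Finally, taking $S=\D[m+1]$ (collapsible) and $T=\pD[m+1]$ (of dimension $m\le m$) gives the stated special case $\pb(\pD[m+1],Z,\D[m+1],X)\in\cC$.
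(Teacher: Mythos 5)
Your proof is correct and follows essentially the same route as the paper: peel off one maximal non-degenerate simplex $\sigma$ of $T$ at a time while keeping $S$ fixed, and realize $\pb(T,Z,S,X)$ as the fibre product $\pb(T',Z,S,X)\times_{\pb(\pD[d],Z,\D[d],X)}Z_d$ along the hypothesized cover $Z_d\thra\pb(\pD[d],Z,\D[d],X)$, with the base case $T=\emptyset$ handled by collapsibility of $S$ via \cite[Lemma 2.4]{henriques}. The paper presents this same identity as the concatenation of two pull-back squares (its diagram \eqref{bigpb}) rather than as a single displayed isomorphism, but the decomposition, the key covers used, and the bookkeeping that only relative mapping spaces ever need to be representable are identical.
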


\begin{proof}
Let $T'$ be a sub-simplicial set obtained by deleting one $l$-simplex
from $T$ (without its boundary, and $T'\to T$ includes the case of
$\empty\to \Delta[0]$). We have a push-out diagram

\begin{diagram}
T'      &    \rTo    & \SWpbk T\\
 \uTo       &   & \uTo \\
\pD[l]&    \rinto    &\D[l].
\end{diagram}

Applying the functor $\pb(-, Z, S, X)$, this gives a
pull-back diagram
\begin{diagram}
\PB(T',Z,S,X)      &&   \lTo    & \raisebox{-.7cm}{\SWpbk} \PB(T,Z,S,X)&\phantom{=Z_l}\\ \\
 \dTo       &   && \dTo \\
\PB(\pD[l],Z,S,X)&&    \lTo    &\PB(\D[l],Z,S,X)&,
\end{diagram}

which may be combined with the pull-back diagram

\begin{diagram}
\PB(\pD[l],Z,S,X)      &&   \lTo    & \raisebox{-.7cm}{\SWpbk} \PB(\D[l],Z,S,X)\\ \\
 \dTo       &   && \dTo \\
\PB(\pD[l],Z,\D[l],X)&&    \lTo    &\PB(\D[l],Z,\D[l],X)&=Z_l
\end{diagram}

to give yet another pull-back diagram

\begin{equation}\label{bigpb}
\begin{diagram}
\PB(T',Z,S,X)      &&   \lTo    & \raisebox{-.7cm}{\SWpbk} \PB(T,Z,S,X)\\ \\
 \dTo       &   && \dTo \\
\PB(\pD[l],Z,\D[l],X)&&    \lTo    &\PB(\D[l],Z,\D[l],X)&=Z_l.
\end{diagram}
\end{equation}

By induction on the size of $T$ (\cite[Lemma 2.4]{henriques} implies the case when $T=\emptyset$) and the induction hypothesis, we may assume that the upper left and lower
left spaces in \eqref{bigpb} are known to be in $\cC$. The bottom
arrow is a cover by hypothesis. Therefore by the property of covers,
the upper right space is also in $\cC$, which is what we wanted to prove.
\end{proof}

As a byproduct of Lemma
\ref{lemma:indct-mfd-equi}, we have: 
\begin{lemma} \label{lemma:proj-sub}
If $Z\to X$ is \an \equivalence \ of \ngpd, then for a sequence of subsimplicial sets $T' \subset
  T \subset S$ where $S$ is collapsible, the natural map $\pb(T, Z, S, X) \to
\pb(T', Z, S, X)$ is a cover in $\cC$.  In particular,
\begin{enumerate}
\item \label{itm:x}  the natural map $\pb(\pD[m], Z, \D[m], X) \to
X_m$ is a cover, when we choose $T'=\emptyset$, $T=\pD[m]$ and $S=\D[m]$;
\item \label{itm:z} the natural map $Z_m \to \pb({\L[m,j]}, Z, \D[m], X) $
  is a cover in $\cC$, when we choose  $(T\to S)=(\D[m]\xrightarrow{\id}\D[m])$
  and $(T'\to S)=(\L[m,j]\hookrightarrow \D[m])$;
\item \label{itm:L} the natural map $\hom(\L[m,j], Z) \to \hom(\L[m,j],
  X)$ is a cover in $\cC$, when we choose $(T\to
  S)=(\L[m,j]\xrightarrow{\id} \L[m,j]) $ and $T'=\emptyset$;
\item \label{itm:n}  we have 
\begin{equation}\label{eq:ge-n}
Z_k \cong \pb(\pD[k],Z, \D[k], X), \quad \forall k\geq n. 
\end{equation}  
\end{enumerate}
\end{lemma}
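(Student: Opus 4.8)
The plan is to deduce the main (``cover'') statement from the filtration machinery already set up in Lemma \ref{lemma:indct-mfd-equi}, and to treat the isomorphism claim \eqref{eq:ge-n} (item \ref{itm:n}) separately by an induction on degree. The four enumerated items then split into two groups: items \ref{itm:x}--\ref{itm:L} are immediate specializations of the main statement to particular triples $(T',T,S)$, whereas item \ref{itm:n} asserts an \emph{isomorphism} and hence cannot be read off from the main statement (which only produces covers); it requires the uniqueness conditions $\Kan!(k,j)$ in degrees $k>n$.

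For the main statement I would first reduce to adjoining a single non-degenerate simplex at a time. Choose a filtration $T'=T_0\subset T_1\subset\dots\subset T_r=T$ in which each $T_{i+1}$ is obtained from $T_i$ by adding one non-degenerate $l$-simplex; ordering the cells of $T\setminus T'$ by increasing dimension guarantees that the boundary of the new cell already lies in $T_i$, so that there is a push-out square with corners $\pD[l]\hookrightarrow\D[l]$ and $T_i\to T_{i+1}$. Applying the contravariant functor $\pb(-,Z,S,X)$ turns this into a pull-back square exhibiting $\pb(T_{i+1},Z,S,X)\to\pb(T_i,Z,S,X)$ as the base change of $\pb(\D[l],Z,S,X)\to\pb(\pD[l],Z,S,X)$. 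Pasting with the second pull-back square used in the proof of Lemma \ref{lemma:indct-mfd-equi}, the latter map is in turn the base change of the building-block map $Z_l=\pb(\D[l],Z,\D[l],X)\to\pb(\pD[l],Z,\D[l],X)$. This building-block map is a cover for every $l$: it is a cover for $l\le n-1$ and an isomorphism for $l=n$ by the definition of an equivalence, and an isomorphism (hence a cover) for $l>n$ by item \ref{itm:n}. Since covers are stable under base change and composition, each step $\pb(T_{i+1},Z,S,X)\to\pb(T_i,Z,S,X)$ is a cover, and so is their composite. All intervening spaces lie in $\cC$ by Lemma \ref{lemma:indct-mfd-equi}, whose hypotheses hold in every degree once item \ref{itm:n} supplies the building-block covers above $n$.

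For item \ref{itm:n} I would argue by induction on $k\ge n$, the base case $k=n$ being exactly the isomorphism demanded in the definition of an equivalence. For the inductive step I would fix a horn $\L[k,j]$ and factor the natural map as $Z_k\xrightarrow{a}\pb(\pD[k],Z,\D[k],X)\xrightarrow{b}\pb(\L[k,j],Z,\D[k],X)$. Because $k>n$ and both $Z$ and $X$ satisfy $\Kan!(k,j)$, one checks on $T$-points that $b\circ a$ is an isomorphism: an element of the target is a horn in $Z$ together with a $k$-simplex of $X$ restricting to its image, but the $X$-simplex is forced by $\Kan!$ on $X$, while the horn extends uniquely by $\Kan!$ on $Z$. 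Thus $a$ is a split monomorphism, and it remains to show $b$ is a monomorphism, which is where the induction hypothesis enters: two elements with the same image under $b$ share the same $X$-simplex and the same $Z$-faces except possibly the $j$-th, and those two $j$-th faces are $(k-1)$-simplices of $Z$ with equal boundaries and equal images in $X$, hence equal by the degree-$(k-1)$ isomorphism $Z_{k-1}\cong\pb(\pD[k-1],Z,\D[k-1],X)$. Therefore $b$ is mono, $a$ is an isomorphism, and \eqref{eq:ge-n} follows; working throughout with the functor of points and the subcanonical topology lets one transport these set-level isomorphisms back to $\cC$.

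I expect the real content to sit in item \ref{itm:n} rather than in the main statement. The filtration argument is essentially a repackaging of Lemma \ref{lemma:indct-mfd-equi}, so the only care needed there is bookkeeping (ordering cells so boundaries are present, and invoking the two pasted pull-back squares). The delicate point is the monomorphism step for $b$: it is precisely where uniqueness of fillers in degrees $>n$ and the degree-$(k-1)$ isomorphism must be combined, and it is what forces item \ref{itm:n} to be proved by a dedicated induction rather than extracted from the cover statement. Once item \ref{itm:n} is in hand, items \ref{itm:x}, \ref{itm:z} and \ref{itm:L} follow by substituting, respectively, $(T',T,S)=(\emptyset,\pD[m],\D[m])$, $(\L[m,j],\D[m],\D[m])$ and $(\emptyset,\L[m,j],\L[m,j])$, after noting that $\D[m]$ and $\L[m,j]$ are collapsible.
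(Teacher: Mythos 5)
Your proposal is correct and follows essentially the same route as the paper: the cover statement comes from the same cell-by-cell filtration $T'=T_0\subset\dots\subset T_r=T$ together with the pasted pull-back squares of Lemma \ref{lemma:indct-mfd-equi}, and item \ref{itm:n} is obtained, exactly as in the paper, by combining $\Kan!(k,j)$ for both $Z$ and $X$ with the degree-$(k-1)$ isomorphism, inductively starting from $k=n$. The only cosmetic difference is that you check injectivity of $\pb(\pD[k],Z,\D[k],X)\to\pb(\L[k,j],Z,\D[k],X)$ pointwise, whereas the paper simply observes that this map is a base change of the degree-$(k-1)$ isomorphism and is therefore itself an isomorphism.
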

\begin{proof} We use the same induction as in Lemma \ref{lemma:indct-mfd-equi} and only have to
notice that the lower lever map in \eqref{bigpb} is a cover, hence so
is the upper lever map. Since
composition of covers is still a cover, we obtain the result by
introducing a sequence of subsimplicial sets $T'=T_0\subset T_1 \subset \dots \subset T_{j-1} \subset
T_j $, where each $T_i$ is obtained from $T_{i-1}$ by removing a
simplex. 

For item \ref{itm:n}, we take $(T\to S)=(\pD[n+1]\hookrightarrow
\D[n+1])$ and $(T'\to S)=(\L[n+1, j] \hookrightarrow \D[n+1])$, and
use the fact that the lower lever map in \eqref{bigpb} is an
isomorphism when $l=n$. We obtain 
\[ \begin{split}
\pb(\pD[n+1], Z,
  \D[n+1], X) &\cong \pb({\L[n+1, j]}, Z,
  \D[n+1], X) \\ 
& = \hom(\L[n+1, j], Z) \times_{ \hom(\L[n+1, j], X)}
  X_{n+1} \cong Z_{n+1},
\end{split}
\] since $X_{n+1}\cong \hom(\L[n+1, j], X)$.  Then inductively, we obtain the result for all $k\ge n$.
\end{proof}

\begin{lemma} \label{lemma:comp-equi}
 The composition of \equivalences \ is still \an
\equivalence.
\end{lemma}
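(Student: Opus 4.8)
The plan is to reduce the statement to the two defining conditions of the constituent \equivalences \ by factoring the comparison map of the composite through that of one factor. Suppose $f\colon Z\to Y$ and $g\colon Y\to X$ are \equivalences \ of \ngpd, and write $gf$ for their composite. Fix $k$ and abbreviate the three relevant pull-backs by
\[ P^f_k := \pb(\pD[k],Z,\D[k],Y),\quad P^g_k := \pb(\pD[k],Y,\D[k],X),\quad P^{gf}_k := \pb(\pD[k],Z,\D[k],X). \]
By hypothesis the comparison maps $\alpha\colon Z_k\to P^f_k$ and $Y_k\to P^g_k$ are covers for $k\le n-1$ and isomorphisms for $k=n$. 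I want to show that the comparison map $Z_k\to P^{gf}_k$ for $gf$ enjoys the same property, and that $P^{gf}_k$ is an object of $\cC$ in the first place.

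The key observation is the canonical identification
\[ P^{gf}_k = \hom(\pD[k],Z)\times_{\hom(\pD[k],X)}X_k \cong \hom(\pD[k],Z)\times_{\hom(\pD[k],Y)}P^g_k \]
(using $P^g_k=\hom(\pD[k],Y)\times_{\hom(\pD[k],X)}X_k$), while $P^f_k=\hom(\pD[k],Z)\times_{\hom(\pD[k],Y)}Y_k$. Under these identifications the natural map $\beta\colon P^f_k\to P^{gf}_k$ is exactly the base change of $Y_k\to P^g_k$ along the morphism $P^{gf}_k\to P^g_k$ induced by $f_*\colon\hom(\pD[k],Z)\to\hom(\pD[k],Y)$; concretely $P^f_k\cong P^{gf}_k\times_{P^g_k}Y_k$. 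Since covers are stable under base change, $\beta$ is a cover for $k\le n-1$ and, as the base change of an isomorphism, an isomorphism for $k=n$. One then checks directly that $\beta\circ\alpha$ is the comparison map $Z_k\to P^{gf}_k$ of $gf$, so it is a composite of covers, hence a cover, and an isomorphism when $k=n$ --- which is precisely the \equivalence \ condition.

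The main obstacle is that the boundary mapping spaces $\hom(\pD[k],Z)$ and $\hom(\pD[k],Y)$ need not be objects of $\cC$, so the displayed identities a priori live only in the category of sheaves and the ``base change'' must be realized honestly inside $\cC$. I would deal with this by an induction on $k$ that runs in parallel with the verification above. Assume $P^{gf}_l$ exists in $\cC$ and $Z_l\to P^{gf}_l$ is a cover for all $l<k$; then Lemma \ref{lemma:indct-mfd-equi}, applied to the strict map $gf$ with $S=\D[k]$, $T=\pD[k]$ and $m=k-1$, guarantees that $P^{gf}_k=\pb(\pD[k],Z,\D[k],X)$ is again an object of $\cC$. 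Now $P^g_k$ and $P^f_k$ already lie in $\cC$ because $f$ and $g$ are \equivalences, and the pull-back $P^{gf}_k\times_{P^g_k}Y_k$ exists in $\cC$ by the cover axiom; its sheaf-theoretic identification with $P^f_k$ then legitimizes $\beta$ as a genuine cover in $\cC$, and the factorization $\beta\circ\alpha$ closes the induction (the range $0\le k\le n$ being all that Definition \ref{defequivalence} requires). I expect the only delicate points to be the bookkeeping of the base-change identification at the sheaf level and making sure the isomorphism case $k=n$ is propagated correctly; the stability of covers under base change and composition does the rest.
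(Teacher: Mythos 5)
Your proposal is correct: the identification $P^{gf}_k\cong \hom(\pD[k],Z)\times_{\hom(\pD[k],Y)}P^g_k$ exhibits $\beta\colon P^f_k\to P^{gf}_k$ as a base change of the comparison map $Y_k\to P^g_k$, so the comparison map of $gf$ factors as a composite of covers (isomorphisms at level $n$), and your induction via Lemma \ref{lemma:indct-mfd-equi} correctly settles representability of $P^{gf}_k$ in $\cC$. The paper gives no proof here (it is ``left to the reader''), and your argument is precisely the standard verification the author intends.
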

\begin{proof}
This is easy to verify, and we leave it to the reader.
\end{proof}

\begin{lemma}\label{lemma:ngpd-fp}
Given a strict map $f: Z \to X$ and \an \equivalence \ $f': Z' \to X$, the fibre product $Z \times_{X} Z'$ of \ngpd \ is still an $n$-groupoid
object in $(\cC, \cT)$.
\end{lemma}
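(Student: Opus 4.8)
The plan is to take $W:=Z\times_X Z'$ to be the levelwise fibre product, $W_m=Z_m\times_{X_m}Z'_m$, and to verify the conditions of Definition \ref{defngroupoids} for it one at a time. The first point I would settle is existence: I claim $Z'_m\to X_m$ is a cover for every $m$. Indeed it factors as $Z'_m\to \pb(\pD[m],Z',\D[m],X)\to X_m$, where the first map is a cover (an isomorphism when $m\ge n$) because $f'$ is \an \equivalence, and the second is a cover by Lemma \ref{lemma:proj-sub}(\ref{itm:x}) applied to $f'$. Since $Z_m\to X_m$ is an arbitrary morphism and $Z'_m\to X_m$ a cover, the pullback $W_m$ exists in $\cC$ by the pretopology axioms; the faces and degeneracies of $Z$, $Z'$, $X$ then induce those of $W$, so that $W$ is a genuine simplicial object in $(\cC,\cT)$.

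For the Kan conditions I would first note that $\hom(\L[m,j],-)$ commutes with levelwise fibre products, so, writing $L_Z:=\hom(\L[m,j],Z)$ and similarly $L_X,L_{Z'}$, we have $\hom(\L[m,j],W)\cong L_Z\times_{L_X}L_{Z'}$; this lies in $\cC$ because $L_{Z'}\to L_X$ is a cover by Lemma \ref{lemma:proj-sub}(\ref{itm:L}). To verify $\Kan(m,j)$ I would factor the restriction map as
\[ W_m\;\longrightarrow\;Z_m\times_{L_X}L_{Z'}\;\longrightarrow\;L_Z\times_{L_X}L_{Z'}\cong\hom(\L[m,j],W), \]
where $Z_m\to L_X$ is the composite $Z_m\to X_m\to L_X$. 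The second arrow is the base change of the Kan cover $Z_m\to L_Z$ along the projection $L_Z\times_{L_X}L_{Z'}\to L_Z$, hence a cover. The first arrow is the base change of the cover $Z'_m\to L_{Z'}\times_{L_X}X_m=\pb(\L[m,j],Z',\D[m],X)$ furnished by Lemma \ref{lemma:proj-sub}(\ref{itm:z}), pulled back along the map $Z_m\times_{L_X}L_{Z'}\to L_{Z'}\times_{L_X}X_m$ sending $(z,\xi')$ to $(\xi',\bar z)$ with $\bar z$ the image of $z$ in $X_m$; a direct check identifies the resulting pullback with $W_m$. Both arrows are therefore covers, and so is their composite.

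It remains to treat $\Kan!(m,j)$ for $m>n$. Here $Z$, $Z'$ and $X$ all satisfy $\Kan!(m,j)$, so the three restriction maps $Z_m\to L_Z$, $Z'_m\to L_{Z'}$, $X_m\to L_X$ are isomorphisms. These assemble into an isomorphism of cospans $(Z_m\to X_m\leftarrow Z'_m)\to(L_Z\to L_X\leftarrow L_{Z'})$, which induces an isomorphism on the fibre products, i.e.\ $W_m\cong\hom(\L[m,j],W)$, as required.

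I expect the main obstacle to be the first arrow in the factorization above: one cannot argue that $W_m\to\hom(\L[m,j],W)$ is a cover by any naive ``fibre product of covers'' principle, since that principle is false. The decisive input is precisely that $f'$ is \an \equivalence and not merely a map of \ngpd, which is what makes $Z'_m\to L_{Z'}\times_{L_X}X_m$ a cover (Lemma \ref{lemma:proj-sub}(\ref{itm:z})) and lets this first step be realized as a base change. The only other point needing care is the verification that $\hom(\L[m,j],-)$ genuinely commutes with these fibre products at the level of representable sheaves, so that the identifications take place inside $\cC$; I would handle this via the Yoneda embedding exactly as in Lemma \ref{lemma:indct-mfd-equi}.
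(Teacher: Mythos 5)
Your proof is correct and follows essentially the same route as the paper's: both identify $\hom(\L[m,j],Z\times_X Z')$ with $\hom(\L[m,j],Z)\times_{\hom(\L[m,j],X)}\hom(\L[m,j],Z')$ and reduce the Kan cover to the two inputs $Z_m\thra\hom(\L[m,j],Z)$ and $Z'_m\thra\pb(\L[m,j],Z',\D[m],X)$ (item \ref{itm:z} of Lemma \ref{lemma:proj-sub}), with the isomorphism case for $m>n$ handled by turning these covers into isomorphisms. Your explicit two-step base-change factorization of $W_m\to\hom(\L[m,j],W)$ is precisely the content of the paper's Lemma \ref{lemma:fp}, unpacked in place, and you dispense with the paper's induction on $m$ since representability at each level follows directly from Lemma \ref{lemma:proj-sub}.
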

\begin{proof}
We first notice that $Z\times_{X} Z'$ is a simplicial object (of sheaves on $\cC$) with
each layer $ \hom(\D[m],Z\times_{X} Z')=  Z_m \times_{X_m} Z'_m$. We
use an induction to show that  $Z\times_{X} Z'$ is an $n$-groupoid object
in $\cC$. First when $n=0$, $Z'_0\thra X_0$, hence $Z_0 \times_{X_0}
Z'_0 $ is representable in $\cC$ and $Z_0 \times_{X_0}
Z'_0 \thra * $.

Now assume that  $ \hom(\D[k],Z\times_{X} Z') \thra
\hom(\L[k,j],Z\times_{X} Z' ) $ is a cover in $\cC$ for $0\le j\le k <
m$.  By item
  \ref{itm:L} of Lemma \ref{lemma:proj-sub},  $\hom(\L[m,j], Z\times_{X} Z')$ is
representable.
When $m<n$, we need to show that $ \hom(\D[m],Z\times_{X} Z')
\thra  \hom(\L[m,j], Z\times_{X} Z') $ is a cover in $\cC$; when $m\ge
n$, we need to show that  $ \hom(\D[m],Z\times_{X} Z')
\cong  \hom(\L[m,j], Z\times_{X} Z') $ is an isomorphism in
$\cC$. When $m<n$,  applying $X_m \thra \hom(\L[m,j], X)$ to the south-east corner of the
following pull-back diagram in $\cC$, 
\[
\begin{diagram}
 \hom(\L[m,j], Z\times_{X} Z')  \hbox{\SEpbk}   && \rTo &
\hom(\L[m,j], Z') \\
\dTo & && \dTo  \\
\hom(\L[m,j], Z)  && \rTo & \hom(\L(m,j), X). \\
\end{diagram}
\]
By item \ref{itm:z} in Lemma \ref{lemma:proj-sub} and the fact that $Z$ is
an $n$-groupoid object in $(\cC, \cT)$, we have
\[
\begin{split}
Z'_m & \thra \pb({\L[m,j]}, Z', \D[m], X)= \hom(\L[m,j], Z')\times_{
  \hom(\L(m,j), X)}X_m, \\
Z_m  &  \thra  \hom(\L[m,j], Z). 
\end{split}
\]
Thus by Lemma \ref{lemma:fp}, we have that $ \hom(\D[m],Z\times_{X} Z')
\thra  \hom(\L[m,j], Z\times_{X} Z') $ is a cover in $\cC$, which
completes the induction. 
When $m\ge n$,  the three $\thra$'s
above becomes three $\cong$'s (see \eqref{eq:ge-n}). Hence  the same proof concludes $ \hom(\D[m],Z\times_{X} Z')
\cong  \hom(\L[m,j], Z\times_{X} Z') $.
\end{proof}

\begin{lemma}\label{lemma:inverse-comp-equi}
Given $Z$, $Z'$ and $X$ \ngpd, if   $f: Z \to X$ is \an \equivalence \ and  $Z''=Z \times_{X} Z'$ is
still an $n$-groupoid object in $(\cC, \cT)$,  the natural map  $Z'' \lra
Z'$ is \an \equivalence.
\end{lemma}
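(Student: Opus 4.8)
The plan is to recognise the evident simplicial projection $Z''\to Z'$, in each degree, as a base change of the equivalence $f\colon Z\to X$, and then to invoke stability of covers and isomorphisms under pull-back. To test Definition~\ref{defequivalence} for $Z''\to Z'$ I must understand, for each $k$, the natural map $Z''_k\to\pb(\pD[k],Z'',\D[k],Z')$. Since fibre products of simplicial objects are formed degreewise, $Z''_k=Z_k\times_{X_k}Z'_k$, and since $\hom(S,-)$ preserves limits the boundary spaces likewise split, $\hom(\pD[k],Z'')=\hom(\pD[k],Z)\times_{\hom(\pD[k],X)}\hom(\pD[k],Z')$; so everything in sight is obtained from the corresponding data for $f$ by pulling back along $Z'\to X$.

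First I would establish the identification
\[
\pb(\pD[k],Z'',\D[k],Z')\;\cong\;\pb(\pD[k],Z,\D[k],X)\times_{X_k}Z'_k,
\]
in which $\pb(\pD[k],Z,\D[k],X)\to X_k$ records the filling $k$-simplex in $X$ and $Z'_k\to X_k$ is $f'_k$. This is a diagram chase: an element on the left is a boundary $\pD[k]\to Z''$ together with a filler $\D[k]\to Z'$ restricting to its $Z'$-component; since that component is then determined by the filler, the data reduce to a boundary $\pD[k]\to Z$, a simplex in $Z'_k$, and the agreement of their images in $\hom(\pD[k],X)$, which is exactly the right-hand side. I must also check this space lies in $\cC$: as $f$ is a \equivalence, item~\ref{itm:x} of Lemma~\ref{lemma:proj-sub} makes $\pb(\pD[k],Z,\D[k],X)\to X_k$ a cover, so the pull-back along $f'_k$ exists by the pretopology axioms.

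Under this identification I would then note that the square with vertical maps the projections $Z''_k\to Z_k$ and $\pb(\pD[k],Z'',\D[k],Z')\to\pb(\pD[k],Z,\D[k],X)$, and horizontal maps the comparison maps of Definition~\ref{defequivalence}, is cartesian: the composite $Z_k\to\pb(\pD[k],Z,\D[k],X)\to X_k$ is simply $f_k$, so pulling the lower-right corner back along $Z_k$ returns $Z_k\times_{X_k}Z'_k=Z''_k$. Hence the top map $Z''_k\to\pb(\pD[k],Z'',\D[k],Z')$ is the base change of the bottom map $Z_k\to\pb(\pD[k],Z,\D[k],X)$. Since $f$ is a \equivalence, the latter is a cover for $k\le n-1$ and an isomorphism for $k=n$; both classes are preserved by pull-back (covers by the pretopology axioms, isomorphisms by a formal categorical fact), so the former inherits the same behaviour, which is precisely what Definition~\ref{defequivalence} requires of $Z''\to Z'$. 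The hard part will be only the bookkeeping: pinning down the identification of pull-back spaces and confirming the comparison square is genuinely cartesian rather than merely commutative, while at each stage verifying membership in $\cC$. All of this rests on the cover statements of Lemma~\ref{lemma:proj-sub}, so no device beyond base-change stability is needed.
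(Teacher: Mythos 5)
Your proposal is correct and follows essentially the same route as the paper: both identify $\pb(\pD[k],Z'',\D[k],Z')$ with the pull-back of $\pb(\pD[k],Z,\D[k],X)\to X_k$ along $Z'_k\to X_k$ (the paper does this by applying $\hom(\pD[k]\to\D[k],-)$ to the cartesian square in the arrow category, you by a direct check), and then exhibit the comparison map for $Z''\to Z'$ as a base change of the one for $f$, concluding by stability of covers and isomorphisms under pull-back (which the paper packages as the degenerate case $L\cong A$, $M\cong B$ of Lemma~\ref{lemma:fp}). The only cosmetic discrepancy is that the paper's written proof swaps the roles of $Z$ and $Z'$ relative to the statement; your version matches the statement's labelling.
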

\begin{proof}
Apply $\hom(\pD[m] \to \D[m], -)$ to the pull-back diagram
\[
\begin{diagram}
\left\{\begin{matrix} Z'\times_X Z \\
\downarrow_{\pr_2} \\
Z\end{matrix}\right\}    \raisebox{-0.7cm}{\SEpbk}   &&   \rTo &  \left\{\begin{matrix} Z' \\
\downarrow_{f'} \\
X\end{matrix}\right\}    \\ \\
 \dTo       &   && \dTo \\
\left\{\begin{matrix}  Z \\
\downarrow_{\id} \\
Z\end{matrix}\right\}   &&    \rTo    &\left\{\begin{matrix}  X\\
\downarrow_{\id} \\
X\end{matrix}\right\} 
.\end{diagram}
\]
We obtain a pull-back diagram in $\cC$, 
\[
\begin{diagram}
\PB(\pD[m] ,  Z'\times_X Z, \D[m], Z) \raisebox{-0.7cm}{\SEpbk}   &&
\rTo & \PB(\pD[m], Z', \D[m], X)   \\ \\
 \dTo       &   && \dTo \\
Z_m= \PB(\pD[m], Z, \D[m], Z)  &&    \rTo    &  \PB(\pD[m], X, \D[m], X)=X_m 
.\end{diagram}
\]
When $m<n$, notice that
\begin{equation}\label{eq:zz'}
Z'_m \thra \pb(\pD[m], Z', \D[m], X), \quad Z_m \cong Z_m, \quad X_m
\cong X_m;
\end{equation}
then using Lemma \ref{lemma:fp} (in the case $L \cong A$ and $M\cong B $), we conclude that $Z_m \times_{X_m} Z_m
\thra \pb(\pD[m] ,  Z'\times_X Z, \D[m], Z)$ is a cover in $\cC$. When
$m= n$, we only have to change the $\thra$ in \eqref{eq:zz'} to
$\cong$ to obtain  $Z_m \times_{X_m} Z_m
\cong \pb(\pD[m] ,  Z'\times_X Z, \D[m], Z)$.  
\end{proof}

\begin{lemma}\label{lemma:fp}
Given a pull-back diagram in $\cC$,
\[
\xymatrix{B\times_A C \ar[r] \ar[d] & C\ar[d] \\
B \ar[r] & A,}
\]
covers $L \to A$, $M \to B$, $N \to
L\times_{A}C$, and a morphism $M \to L $, then the natural map
$M\times_L N \to B\times_A C$ is a cover. Moreover when $M\to B$ and
$N\to L\times_A C$ are isomorphisms, $M\times_L N \to B\times_A C$ is an isomorphism. 
\end{lemma}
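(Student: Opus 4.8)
The plan is to factor the natural map $M\times_L N \to B\times_A C$ as a composite of two maps, each of which is a base change of one of the hypothesized covers, and then to invoke stability of covers under pullback (the third pretopology axiom) and under composition. Throughout I use that $M\to L$ is compatible over $A$, i.e.\ the composite $M\to L\to A$ agrees with $M\to B\to A$, so that all the fibre products below are meaningful and the natural map is defined.

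First I would introduce the intermediate object $M\times_A C$, where $M\to A$ is taken to be $M\to L\to A$. Because this structure map factors through $B$, there is a canonical identification $M\times_A C \cong M\times_B(B\times_A C)$, which exhibits the map $M\times_A C \to B\times_A C$ as the pullback of the cover $M\to B$ along $B\times_A C \to B$. By the pretopology axioms this map is then a cover; and if $M\to B$ is an isomorphism, so is its base change.

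Next I would identify the remaining map $M\times_L N \to M\times_A C$ as a base change of $N\to L\times_A C$. The key observation is the pair of canonical isomorphisms
\[
M\times_L(L\times_A C)\;\cong\; M\times_A C, \qquad M\times_L N\;\cong\;(M\times_A C)\times_{L\times_A C} N,
\]
where the map $M\times_A C \to L\times_A C$ is induced by $M\to L$ on the first factor. The first isomorphism is a pasting of pullback squares; the second follows by pasting it with the defining square of $M\times_L N$. Hence $M\times_L N \to M\times_A C$ is the pullback of the cover $N\to L\times_A C$, so it is a cover, and an isomorphism whenever $N\to L\times_A C$ is.

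Composing the two covers $M\times_L N \to M\times_A C \to B\times_A C$ then yields the desired cover, using that composites of covers are covers; and under the additional hypotheses both factors become isomorphisms, giving the last claim. The only real work is bookkeeping: checking that the two displayed fibre-product identifications hold, which is an application of the pasting law for pullback squares together with the compatibility of $M\to L$ over $A$. I expect this verification --- keeping straight which structure map to $A$ (and to $L$) each factor uses --- to be the main, though routine, obstacle.
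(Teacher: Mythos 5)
Your proof is correct and is essentially the paper's argument: the paper's big pull-back diagram encodes exactly your factorization $M\times_L N \to M\times_A C \to B\times_A C$ (the intermediate $\bigcirc$ is $M\times_A C$), with the second leg a base change of $M\to B$ and the first a base change of $N\to L\times_A C$, after which one composes covers. Your explicit remark that the two structure maps $M\to A$ must agree, and your handling of representability via the pullback axiom, match what the paper does implicitly.
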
 
\begin{proof}
We form the following pull-back diagram (where $\bigcirc$ denotes
unimportant  pull-backs),
\[
\xymatrix@C=.3cm@R=.2cm{M \times_L N \ar@{->>}[dd] \ar[rrr] & & & N \ar@{->>}[dd] & \\
& &  B\times_A C \ar[rr] \ar[dd]& & C \ar[dd] \\
\bigcirc \ar[r] \ar@{->>}[urr] \ar[dd] & \bigcirc \ar[rr] \ar[ur] \ar[dd] & &
L\times_{A} C \ar[dd] \ar[ur] & \\
& &  B\ar[rr] & & A \\
M \ar@{.>}[r] \ar@{->>}[urr] \ar@/_1pc/[rrr] & B\times_A L \ar[rr] \ar[ur] & & L
\ar@{->>}[ur] &
}
\]
Since $M$ maps to both $B$ and $L$, there exists a morphism $M\to
B\times_A L$, fit into the diagram above. Since $L\thra A$, all the
objects in the diagram are representable in $\cC$. Then the natural
map $M\times_L N \to B\times_A C$ as a composition of covers is a
cover itself. The statement on isomorphisms may be proven similarly.
\end{proof}

\subsection{Pull-back,  generalized morphisms and various Morita equivalences }

Let us first make the following
observation:  when $n=1$ and $\cC$ is the category of Banach
manifolds, \equivalences\ of $n$-groupoid objects give the concept of
equivalence (or pull-back) of Lie groupoids.  We explicitly study the
case when $n=2$: Let $X$ be a $2$-groupoid object in $\cC$ and let $Z_1\rra Z_0$ be in $\cC$ with structure maps as in \eqref{eq:face-degen} up to
the level $m\leq 1$, and $f_m: Z_m \to X_m$ preserving the
structure maps $d^{m}_k$ and $s^{m-1}_k$ for $m\leq 1$. Then
$\hom(
\pD[m] , Z)$ still makes sense for $m\leq 1$. We further suppose
$f_0: Z_0 \twoheadrightarrow X_0$ (hence $Z_0\times Z_0
\times_{X_0\times X_0} X_1 \in \cC$) and $Z_1\thra Z_0\times
Z_0 \times_{X_0\times X_0} X_1$. That is to say  that the induced
map from $Z_m$ to the pull-back $\pb(
\pD[m] , Z, \D[m], X)$
is a cover for $m= 0, 1$. Then we form\footnote{Strictly speaking, $Z$ is not a simplicial object, but $\hom(
\partial \Delta[2] , Z)$ as a fibre product of $Z_1$'s over $Z_0$'s still makes sense.}
\[Z_2= \Pb(\hom(
\partial \Delta[2] , Z)\ra \hom(\partial \Delta[2], X) \la X_2).\]
It is easy to see that the proof of Lemma
\ref{lemma:indct-mfd-equi} still guarantees $Z_2\in \cC$.
Moreover there are $d^2_i: Z_2\to Z_1$ induced by the natural
maps $\hom(\partial \Delta[2] , Z)\to Z_1$; $s^1_i: Z_1 \to
Z_2$ by
\[ s^1_0(h)=(h,h,s^0_0(d^1_0(h)),s^1_0(f_1(h))), \quad
s^1_1(h)=(s^0_0(d^1_1(h)),h,h,s^1_1(f_1(h)));
\]
$m_i^Z: \Lambda(Z)_{3, i}\to Z_2$ via $m^X_i: \L(X)_{3,i} \to X_2$ by for example
\[ m_0^Z(( h_2, h_5, h_3, \bareta_1), (h_4, h_5, h_0, \bareta_2), (h_1, h_3, h_0, \bareta_3))=
( h_2, h_4, h_1, m^X_0(\bareta_1, \bareta_2, \bareta_3)), \] and
similarly for other $m$'s.
\[
\begin{xy}
*\xybox{(0,0);<3mm,0mm>:<0mm,3mm>::
  ,0
  ,{\xylattice{-5}{0}{-4}{0}}}="S"
  ,{(-10,-10)*{\bullet}}, {(-10, -12)*{_1}},
     ,{(0,0)*{\bullet}}, {(0, 2)*{^{0}}}, {(10, -10)*{\bullet}},
     {(10, -12)*{_2}}, {(15, -4)*{\bullet}}, {(17,-5)*{^3}},
     {(-10, -10) \ar@{->}^{h_0} (0,0)},
     { (10, -10) \ar@{->}^{h_1} (-10,-10)},
      { (10, -10) \ar@{->} (0,0)}, {(2, -5)*{^{h_3}}}, 
     {(15, -4)\ar@{->}^{h_2} (10, -10)},
     {(15, -4)\ar@{->}_{h_5} (0, 0)},
     {(15, -4)\ar@{.>} (-10,-10)}, {(-2,-7)*{^{h_4}}},
\end{xy} \]
Then $Z_2
\Rrightarrow Z_1 \rra Z_0$ is a $2$-groupoid object in $(\cC, \cT)$, and we call it the
\emph {pull-back $2$-groupoid} by $f$. Moreover $f: Z\to X$ is \an
\equivalence\ with $f_0$, $f_1$ and the natural map $f_2: Z_2 \to X_2$.

\begin{defi}\label{defi:gen-morp} A \emph {generalized morphism} between two $n$-groupoid objects $X$ and
  $Y$ in $(\cC, \cT)$ consists of a \emph {zig-zag}  of strict maps
$X\stackrel{\sim}{\leftarrow}Z\to Y$, where the map $Z\stackrel{\sim}{\to} X$ is
\an \equivalence. 
\end{defi}

\begin{prop}A composition of generalized morphisms is still a
generalized morphism.
\end{prop}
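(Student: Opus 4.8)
The plan is to compose two generalized morphisms
$X \xleftarrow{\sim} Z \to Y$ and $Y \xleftarrow{\sim} W \to V$ by forming the fibre product of the two maps that meet over $Y$, namely $Z \to Y \xleftarrow{\sim} W$. Concretely, I would set $Z' := Z \times_Y W$ and try to produce the zig-zag
$X \xleftarrow{\sim} Z' \to V$,
where the leftward map is the composite $Z' \to Z \xrightarrow{\sim} X$ and the rightward map is the composite $Z' \to W \to V$. The whole point of the preceding lemmas is to make every piece of this construction legitimate.

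\textbf{The key steps, in order.} First, I would observe that $Z' = Z \times_Y W$ is again an $n$-groupoid object in $(\cC, \cT)$: this is exactly Lemma \ref{lemma:ngpd-fp}, applied to the strict map $Z \to Y$ and the equivalence $W \xrightarrow{\sim} Y$. Second, I must check that the projection $Z' \to Z$ is an equivalence. For this I would invoke Lemma \ref{lemma:inverse-comp-equi}: since $W \xrightarrow{\sim} Y$ is an equivalence and $Z' = Z \times_Y W$ is an $n$-groupoid object (just established), that lemma tells me the natural map $Z' \to Z$ is an equivalence. Third, I would use that the left leg of the first zig-zag, $Z \xrightarrow{\sim} X$, is an equivalence by hypothesis, and then appeal to Lemma \ref{lemma:comp-equi} (composition of equivalences is an equivalence) to conclude that the composite $Z' \to Z \xrightarrow{\sim} X$ is an equivalence. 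The rightward map $Z' \to W \to V$ is just a composite of strict maps and hence a strict map, needing no further argument. This produces the desired zig-zag $X \xleftarrow{\sim} Z' \to V$ exhibiting a generalized morphism.

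\textbf{Where the work really sits.} The routine part is checking that the two legs compose as strict simplicial maps and that the resulting diagram commutes; the substantive content is entirely bundled into Lemmas \ref{lemma:ngpd-fp}, \ref{lemma:inverse-comp-equi}, and \ref{lemma:comp-equi}. The one hypothesis I must verify before applying Lemma \ref{lemma:inverse-comp-equi} is precisely its running assumption that the fibre product is an $n$-groupoid object, and this is why the order matters: Lemma \ref{lemma:ngpd-fp} has to be applied \emph{first}. So the main obstacle is not a hard estimate but a bookkeeping one, namely arranging the two fibre-product lemmas in the correct logical sequence so that the output of one feeds the hypothesis of the next.

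\textbf{One point to watch.} For Lemma \ref{lemma:inverse-comp-equi} to apply I need the fibre product to be taken over the object $Y$ that is the \emph{target} of the equivalence $W \xrightarrow{\sim} Y$, with the equivalence playing the role of $f$ in that lemma. Since I am gluing the right leg $Z \to Y$ of the first zig-zag to the left leg $W \xrightarrow{\sim} Y$ of the second, this matches up correctly: the equivalence in the pair $(Z \to Y, W \xrightarrow{\sim} Y)$ is $W \xrightarrow{\sim} Y$, and Lemma \ref{lemma:inverse-comp-equi} then gives that the projection onto the \emph{other} factor, $Z' \to Z$, is an equivalence. Provided this identification of factors is made carefully, the proof reduces to a one-line assembly of the three cited lemmas.
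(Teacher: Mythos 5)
Your proposal is correct and follows exactly the route the paper intends: the paper's own proof is the single line ``This follows from Lemmas \ref{lemma:comp-equi}, \ref{lemma:ngpd-fp} and \ref{lemma:inverse-comp-equi},'' and your argument is precisely the intended unpacking — form $Z\times_Y W$, apply Lemma \ref{lemma:ngpd-fp} to see it is an $n$-groupoid object, Lemma \ref{lemma:inverse-comp-equi} to see the projection to $Z$ is a \equivalence, and Lemma \ref{lemma:comp-equi} to compose with $Z\xrightarrow{\sim}X$. Your identification of which factor plays which role in Lemma \ref{lemma:inverse-comp-equi} is also the right one.
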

\begin{proof}This follows from Lemmas \ref{lemma:comp-equi},  \ref{lemma:ngpd-fp} and \ref{lemma:inverse-comp-equi}.
\end{proof}

\begin{defi}\label{defi:m-equi-2gpd} Two $n$-groupoid objects $X$ and
  $Y$ in $(\cC, \cT)$ are
\emph {Morita equivalent} if there is another $n$-groupoid object  $Z$
in $(\cC, \cT)$ and
maps $ X\stackrel{\sim}{\leftarrow} Z
\stackrel{\sim}{\rightarrow} Y$ such that both maps are \equivalences. By Lemmas \ref{lemma:comp-equi},  \ref{lemma:ngpd-fp} and \ref{lemma:inverse-comp-equi}, this definition does give an equivalence
relation. We call it \emph {Morita equivalence} of \ngpd.
\end{defi}

However, Morita equivalent Lie $2$-groupoids correspond to Morita
equivalent SLie groupoids \cite{bz}. Hence to obtain isomorphic
stacky groupoid objects, we need a stricter equivalence relation.

\begin{pdef}\label{defi:1t-m-equi-2gpd}
A strict map of $n$-groupoid objects $f: Z \to X$ is a \emph {$1$-\equivalence}
if it is \an \equivalence\ with $f_0$ an
isomorphism. 
Two $n$-groupoid objects $X$ and $Y$ in $(\cC, \cT)$ are $1$-Morita equivalent if there is an
$n$-groupoid object $Z$ in $(\cC, \cT)$ and maps $ X\stackrel{\sim}{\leftarrow} Z
\stackrel{\sim}{\rightarrow} Y$ such that both maps are
$1$-\equivalences. This gives an equivalence relation between
$n$-groupoid objects, and we call it \emph {$1$-Morita equivalence}.
\end{pdef}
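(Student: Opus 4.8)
The plan is to establish the three axioms of an equivalence relation --- reflexivity, symmetry, and transitivity --- with essentially all the work concentrated in transitivity. Reflexivity is immediate: the span $X\stackrel{\id}{\leftarrow}X\stackrel{\id}{\rightarrow}X$ does the job, since the identity is trivially \an \equivalence\ and $\id_0$ is an isomorphism, so it is a $1$-\equivalence. Symmetry is equally clear: a witnessing span $X\stackrel{\sim}{\leftarrow}Z\stackrel{\sim}{\rightarrow}Y$ with both legs $1$-\equivalences, read backwards, is a witnessing span from $Y$ to $X$.

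For transitivity, suppose $1$-Morita equivalences are given by spans $X\stackrel{\sim}{\leftarrow}Z\stackrel{\sim}{\rightarrow}Y$ and $Y\stackrel{\sim}{\leftarrow}Z'\stackrel{\sim}{\rightarrow}W$. First I would form the fibre product $Z'':=Z\times_Y Z'$ over the two legs landing in $Y$. Since those legs are \equivalences, Lemma \ref{lemma:ngpd-fp} shows that $Z''$ is again an $n$-groupoid object in $(\cC,\cT)$. Applying Lemma \ref{lemma:inverse-comp-equi} twice --- once using the \equivalence\ $Z'\to Y$ to treat the projection $Z''\to Z$, and once using $Z\to Y$ to treat $Z''\to Z'$ --- shows that both projections out of $Z''$ are \equivalences. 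Composing them with the outer legs $Z\to X$ and $Z'\to W$ and invoking Lemma \ref{lemma:comp-equi}, the composites $Z''\to X$ and $Z''\to W$ are \equivalences, so $Z''$ yields a span $X\stackrel{\sim}{\leftarrow}Z''\stackrel{\sim}{\rightarrow}W$. This is exactly the argument that makes plain Morita equivalence (Definition \ref{defi:m-equi-2gpd}) an equivalence relation.

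The one extra point, and the main (if mild) obstacle, is to check that these composites are not merely \equivalences\ but $1$-\equivalences, i.e.\ that the degree-zero map stays an isomorphism through the fibre product. On the $0$-th level $Z''_0=Z_0\times_{Y_0}Z'_0$, and by hypothesis $Z_0\to Y_0$ and $Z'_0\to Y_0$ are isomorphisms. The projection $Z''_0\to Z_0$ is the base change of the isomorphism $Z'_0\to Y_0$ along $Z_0\to Y_0$, hence an isomorphism, and symmetrically for $Z''_0\to Z'_0$; composing with the degree-zero isomorphisms $Z_0\cong X_0$ and $Z'_0\cong W_0$ supplied by the given $1$-\equivalences\ shows $Z''_0\to X_0$ and $Z''_0\to W_0$ are isomorphisms. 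Thus both legs of the new span are $1$-\equivalences\ and $X$ is $1$-Morita equivalent to $W$. The only thing to be careful about is this degree-zero bookkeeping: one must use that a pull-back of an isomorphism in $\cC$ is again an isomorphism (the ``moreover'' clause of Lemma \ref{lemma:fp} covers this) to see that the defining isomorphism condition on $f_0$ is stable under the fibre-product construction.
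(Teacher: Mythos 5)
Your proposal is correct and follows essentially the same route as the paper: the paper's (much terser) proof likewise reduces everything to the fibre product $Z\times_X Z'$ via Lemmas \ref{lemma:comp-equi}, \ref{lemma:ngpd-fp} and \ref{lemma:inverse-comp-equi}, and then makes exactly your degree-zero observation that $Z_0\times_{X_0}Z'_0\cong Z_0\cong X_0\cong Z'_0$. Your write-up just spells out the bookkeeping that the paper leaves implicit.
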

\begin{proof}
It is easy to see that the composition of $1$-\equivalences\ is still
a $1$-\equivalence.  We just have to notice that if both \equivalences\ $f:
Z \to X$ and $f': Z' \to X$ are
$1$-\equivalences, then the natural maps $Z_0\la Z_0\times_{X_0} Z'_0
\ra Z'_0$ are isomorphisms since $Z_0\times_{X_0} Z'_0\cong
Z_0\cong X_0 \cong Z'_0$.
\end{proof}
\begin{remark} \label{rk:1t-m-equi}
For a $1$-\equivalence\ $Z\to X$, since $f_0: Z_0 \cong X_0$, we have
$\hom(\partial \Delta^1, Z) = \hom(\partial \Delta^1, X)$. So the
condition on $f_1$ in Def.\ \ref{defi:1t-m-equi-2gpd} becomes
$f_1: Z_1 \thra X_1$.
\end{remark}

\subsection{$\Cosk^m$,  $\Sk^m$ and finite data description}

Often the conventional way with only finite layers of data to
understand Lie group(oid)s is more conceptual in differential
geometry. For a finite description of
an $n$-groupoid,  we introduce the
functors $\Sk^m$  and $\Cosk^m$ from the category of simplicial objects
in sheaves on $\cC$ to itself \cite[Section 2]{duskin2}. It is easy to describe $\Sk^m$: $\Sk^m(X)_k = X_k$
when $k\leq m$ and $\Sk^m(X)_k$ only has degenerated simplices coming from $X_m$
when $k>m$.  Then $\Cosk^m$ is the right adjoint; that is,
\[ \hom(\Sk^n(Y), X) \cong \hom (Y, \Cosk^n(X)). \]

Presumably, $\Sk^m$ can be easily defined as a functor  from the category of simplicial objects
in  $\cC$ to itself. But $\Cosk^n$ involves taking limit. If $\cC$ does not have all limits, we need to go to the category of sheaves. To use the result of \cite{duskin2} without further complications, we need to introduce the concept of point (see \cite[Section 4]{Rogers-Zhu}).
\begin{defi}
A {\bf point} is a functor $p$ from the category of sheaves on $\cC$ to that of sets, which preserves finite limit sand small colimits. A collection of points $\mathcal{P}$ of $(\cC, \cT)$ is called {\bf jointly conservative}, when a morphism $\phi: F\to G$ in  the category of sheaves on $\cC$ is an isomorphism if and only if $p(\phi): p(F) \to p(G)$ is an isomorphism of sets for all $p\in \mathcal{P}$. 
\end{defi}
It is shown in \cite[Prop.4.2]{Rogers-Zhu} that the category of Banach manifolds with surjective submersions has jointly conservative collection of points. 

\begin{prop} If $X$ is an $n$-groupoid object in $(\cC, \cT)$, which has jointly conservative points,  then
  $\Cosk^{n+1}(X) =X$. 
\end{prop}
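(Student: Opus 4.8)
The plan is to show that the unit $\eta\colon X\to\Cosk^{n+1}(X)$ of the $(\Sk^{n+1},\Cosk^{n+1})$-adjunction is an isomorphism of simplicial objects, by checking it one level at a time. Since $(\Cosk^{n+1}X)_k=\hom(\Sk^{n+1}\D[k],X)$ and $\Sk^{n+1}\D[k]=\D[k]$ whenever $k\le n+1$, the map $\eta_k$ is the identity for $k\le n+1$, so only the levels $k\ge n+2$ require work. Because $\cC$ carries a forgetful functor to sets and $\cT$ is subcanonical, a morphism of $\cC$ into a representable object is detected on underlying points; I will therefore produce the candidate inverses as honest morphisms in $\cC$ and verify that the two composites are identities by a pointwise computation, which is exactly the classical set-theoretic statement that unique horn-filling above dimension $n$ forces $(n+1)$-coskeletality (Duskin--Glenn \cite{duskin}).

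The conceptual core is the following matching-map assertion: for every $m\ge n+2$ the restriction $X_m\to\hom(\pD[m],X)$ is an isomorphism in $\cC$. First, it is a monomorphism already for $m\ge n+1$: the horn $\L[m,j]$ sits inside $\pD[m]$, so two $m$-simplices with equal boundary have equal $j$-horn and hence coincide by $\Kan!(m,j)$, valid since $m>n$. For surjectivity at level $m\ge n+2$, I would build the inverse as the composite of the projection $\hom(\pD[m],X)\to\hom(\L[m,j],X)$ with the inverse of the $\Kan!(m,j)$-isomorphism $X_m\cong\hom(\L[m,j],X)$. One composite is visibly the identity; for the other I must check that the unique filler $\sigma$ of a prescribed boundary reproduces its remaining face, i.e.\ $d_j\sigma=\varphi_j$. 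The faces of $d_j\sigma$ and of $\varphi_j$ agree as elements of $\hom(\pD[m-1],X)$ by the simplicial identities (those faces already lie in the horn), and since the matching map at level $m-1\ge n+1$ is a monomorphism this forces $d_j\sigma=\varphi_j$; by subcanonicity the corresponding equality of morphisms into $X_{m-1}$ holds in $\cC$. This yields a two-sided inverse in $\cC$, hence the matching isomorphism.

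With the matching isomorphisms in hand, I would pass from $\pD$ to $\Sk^{n+1}$ by a skeletal induction. The inclusion $\Sk^{n+1}\D[k]\hookrightarrow\D[k]$ is obtained by attaching the non-degenerate faces of $\D[k]$ of dimensions $n+2,\dots,k$, each along its boundary $\pD[j]$; applying $\hom(-,X)$ turns every attachment into a pullback of the matching map $X_j\to\hom(\pD[j],X)$, which is an isomorphism for $j\ge n+2$ by the previous step. Composing these isomorphisms gives $X_k=\hom(\D[k],X)\xrightarrow{\ \cong\ }\hom(\Sk^{n+1}\D[k],X)=(\Cosk^{n+1}X)_k$, i.e.\ $\eta_k$ is an isomorphism, which completes the argument.

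The main obstacle is not the simplicial bookkeeping but the \emph{representability} bookkeeping: one must know a priori that the boundary spaces $\hom(\pD[m],X)$, and more generally the partial-skeleton mapping spaces appearing in the induction, are objects of $\cC$, even though for a general Kan object such spaces need not be (as the excerpt already warns). For the $n$-groupoid $X$ this is supplied inductively by the same pushout--pullback argument as in the proof of Lemma \ref{lemma:indct-mfd-equi}: the horns $\hom(\L[m,j],X)$ are representable by \cite{henriques}, the matching maps $X_m\to\hom(\L[m,j],X)$ are covers by $\Kan(m,j)$, and the induction then places each boundary space in $\cC$ as a fibre product against a cover. The genuinely delicate point is to keep these two threads synchronized, so that at each dimension the object is known to lie in $\cC$ \emph{before} the $\Kan!$-isomorphism is invoked, and so that every set-level bijection produced by unique filling is promoted to an isomorphism in $\cC$ via subcanonicity.
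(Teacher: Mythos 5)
Your core argument is correct and is genuinely more than what the paper writes down: the paper's entire proof is the one-line reduction ``we only have to show this identity set-theoretically,'' followed by a citation of Duskin, whereas you actually carry out that argument --- unique horn filling above dimension $n$ forces the matching maps $X_m\to\hom(\pD[m],X)$ to be monomorphisms for $m\ge n+1$ and isomorphisms for $m\ge n+2$, and coskeletality then follows by attaching the cells of $\D[k]$ of dimension $\ge n+2$ one at a time. What your route buys is precisely the enrichment the paper elides: a bijection on underlying sets is not automatically an isomorphism in $\cC_1$ or $\cC_2$, so exhibiting the inverse of each matching map as a composite of morphisms (restriction to a horn followed by the inverse of the $\Kan!$ isomorphism) is the honest way to promote the set-theoretic statement to $(\cC,\cT)$. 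Your verification that the filler's remaining face agrees with the prescribed one, via monomorphy of the matching map one level down, is exactly right.

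One step of your representability bookkeeping is off, though it turns out to be harmless. You claim $\hom(\pD[m],X)\in\cC$ follows from the induction of Lemma \ref{lemma:indct-mfd-equi}; but that induction requires the \emph{boundary} matching maps $X_l\to\hom(\pD[l],X)$ to be covers for all lower $l$, which is the defining property of \an \equivalence, not of an $n$-groupoid object, and the paper's own example (the action groupoid of $S^1$ on $\R^2$, for which $\hom(\pD[2],Z)$ is not a Banach manifold) shows these spaces can genuinely fail to be representable in low degrees. Fortunately you never need this: run the whole argument in sheaves via $\yon$. The horn spaces are representable, so $\Kan!(m,j)$ makes $X_m\to\hom(\L[m,j],X)$ an isomorphism of sheaves; your mono/epi argument (which works verbatim on $T$-points, with no appeal to the forgetful functor) then shows $X_m\to\hom(\pD[m],X)$ is an isomorphism of sheaves for $m\ge n+2$; and each cell attachment in the skeletal induction is a pullback of sheaves along one of these isomorphisms, hence itself an isomorphism, so $\hom(\Sk^{n+1}\D[k],X)\cong X_k$ is representable a posteriori rather than a priori. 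With that adjustment the proof is complete.
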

\begin{proof} Take a point $p$, since $p$ preserves finite limit,  $p(\Cosk^{n+1}(X))= \Cosk^{n+1}(p(X)) = p(X)$. The last step of equality follows from the set-theoretical version of this identity, which is shown in \cite[Section 2]{duskin2}. By the property of jointly conservativeness, we have  $\Cosk^{n+1}(X) =X$. 
\end{proof}

This tells us that it is possible to describe an $n$-groupoid object with only the first
$n$ layers and some extra data. The idea is to let $X_{n+1} := \L[n+1, j]
(X) $, which is a certain fibre product involving $X_{k\leq n}$; then
we produce $X$ by
\begin{equation} \label{eq:nerve}
 X = \Cosk^{n+1} \Sk^{n+1} ( X_{n+1} \to X_n \to \dots \to X_0).
\end{equation} 
When $n=1$, this is a groupoid
object in $(\cC, \cT)$, as we have demonstrated in the
introduction. Set-theoretically,  these extra data are
worked out  when
$n=1, 2$  in \cite{duskin2}. We hereby work out the case of
$n=2$ in an enriched category $(\cC, \cT)$, where
representibility in $\cC$ needs to be taken care of.

The extra data for a $2$-groupoid object are
associative
``$3$-multiplications''. 
Following the notion of
simplicial objects, we call $d^m_i$ and $s^m_j$ the face and
degeneracy maps between $X_i$'s, for $i=0,1,2$; they still satisfy the coherence
condition in \eqref{eq:face-degen}. To simplify the notation and
match it with the definition of groupoids, we use the notation
$\bt$ for $d^1_0$, $\bs$ for $d^1_1$ and $e$ for $s^0_0$. Then we
can safely omit the upper indices for $d^2_i$'s and $s^1_i$'s.
Actually we will omit the upper indices whenever it does not cause
confusion. Similarly to the horn spaces $\hom(\L[m,j], X)$, given
only these three layers, we define $\Lambda(X)_{m, j}$ to be the
space of $m$ elements in $X_{m-1}$ glued along elements in
$X_{m-2}$ to a horn shape without the $j$-th face. \vspace{.6cm}

\centerline{\epsfig{file=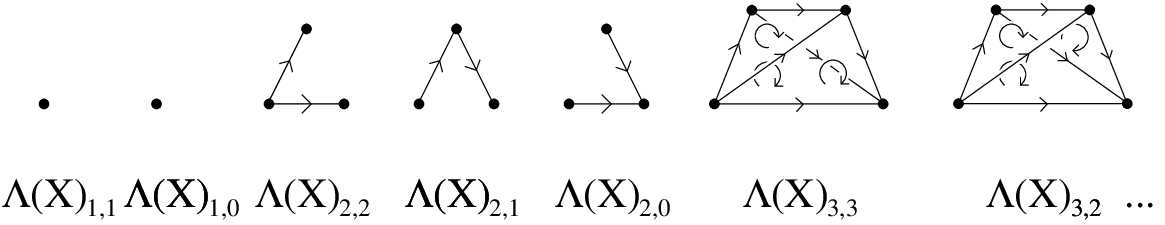,height=1.9cm}} \vspace{.6cm}
\noindent Here one imagines each $j$-dimensional face as an
element in $X_j$. For example,
\[
\begin{split}
\L(X)_{2,2}=X_1\times_{\bs, X_0, \bs}X_1, \; \Lambda(X)_{2, 1} &=
X_1 \times_{\bt, X_0, \bs} X_1, \;
\L(X)_{2,0}=X_1\times_{\bt, X_0, \bt} X_1, \\
\dots, \Lambda(X)_{3,0} &= (X_2\times_{d_2, X_1, d_1}
X_2)\times_{d_1\times d_2, \L(X)_{2,0}, d_1\times d_2} X_2.
\end{split}
\]

We remark that items \eqref{itm:st} and \eqref{itm:dd} in
the proposition-definition below imply that 
the $\L(X)_{2,j}$'s and $\L(X)_{3,j}$'s  are representable in $\cC$. Then with this
condition we can define \emph {$3$-multiplications} as morphisms
$m_i:\Lambda(X)_{3, i}   \to X_2$, $ i=0,\dots,3$. With
$3$-multiplications, there are natural maps between $\L(X)_{3,j}$'s.
For example, \[\L(X)_{3,0} \to \L(X)_{3,1}, \quad \text{by}\;
(\eta_1, \eta_2, \eta_3) \to (m_0(\eta_1, \eta_2, \eta_3), \eta_2,
\eta_3). \] It is reasonable to ask them to be isomorphisms. In
fact, set theoretically, this simply says that the following four
equations are equivalent to each other:
\[
\begin{split}
\eta_0= m_0 (\eta_1, \eta_2, \eta_3), \quad \eta_1 =m_1(\eta_0,
\eta_2, \eta_3), \\ \eta_2=m_2(\eta_0, \eta_1, \eta_3), \quad
\eta_3=m_3(\eta_0, \eta_1, \eta_2).
\end{split}
\]

\begin{pdef}\label{def:finite-2gpd}
A $2$-groupoid object in $(\cC, \cT)$ can be also described by three layers $X_2
\Rrightarrow X_1 \rra X_0 $ of objects in $\cC$ and the following data:
\begin{enumerate}
\item  the face and degeneracy maps $d^n_i$ and $s^n_i$ satisfying
\eqref{eq:face-degen} for $n=1,2$ as explained above, such that
\begin{enumerate}
\item\label{itm:st} [$1$-Kan] $\bt$ and $\bs$ are covers;
\item\label{itm:dd} [$2$-Kan] $d_0\times d_2: X_2\to \L(X)_{2,1}=X_1\times_{\bt, X_0, \bs} X_1$,
$d_0\times d_1: X_2 \to \L(X)_{2,2}=X_1\times_{\bs, X_0, \bs}X_1$,
and $d_1\times d_2: X_2\to \L(X)_{2,0}=X_1\times_{\bt, X_0, \bt}
X_1$ are covers.
\end{enumerate}
\item morphisms ($3$-multiplications),
\[
m_i:  \Lambda(X)_{3, i}   \to X_2, \quad i=0,\dots,3.
\]such that
\begin{enumerate}
\item\label{itm:m-iso} the induced morphisms (by $m_j$ as above) $\L(X)_{3,i}\to \L(X)_{3,j}$ are all
isomorphisms;
\item the $m_i$'s are compatible with the face and degeneracy maps:
\begin{equation}\label{coco}
\hskip - 4 em
\begin{split}
\eta=m_1(\eta, s_0\circ d_1(\eta), s_0\circ d_2(\eta)) &\ \big( \text{which is equivalent to} \; \eta=m_0(\eta, s_0 \circ d_1(\eta), s_0 \circ d_2 (\eta)) \big),\\
\eta= m_2(s_0 \circ d_0 (\eta), \eta, s_1 \circ d_2(\eta)) &\ \big( \text{which is equivalent to} \; \eta= m_1(s_0 \circ d_0 (\eta), \eta, s_1 \circ d_2(\eta)) \big),\\
\eta= m_3 (s_1 \circ d_0 (\eta), s_1 \circ d_1(\eta), \eta) &\
\big( \text{which is equivalent to}\;  \eta= m_2 (s_1 \circ d_0
(\eta), s_1 \circ d_1(\eta), \eta) \big).
\end{split}
\end{equation}
\item the $m_i$'s are associative, that is, for a $4$-simplex $\eta_{0 1 2 3 4}$,
\begin{equation}\label{pic:5-gon}
\begin{xy}
*\xybox{(0,0);<3mm,0mm>:<0mm,3mm>::
  ,0
  ,{\xylattice{-5}{0}{-4}{0}}}="S"
  ,{(-10,-7)*{\bullet}}, {(-10, -9)*{_1}},
     ,
{(0,9)*{\bullet}},  {(0, 11)*{^{0}}},
{(0,3)*{\bullet}}, {(0, 0)*{^{4}}}, {(10, -7)*{\bullet}},
     {(10, -9)*{_{2}}}, {(15, -1)*{\bullet}}, {(17,-2)*{^3}},
     {(-10, -7) \ar@{->} (0,9)}, 
{(-10, -7) \ar@{<-} (0,3)},
     { (10, -7) \ar@{->} (-10,-7)},
 { (10, -7) \ar@{->} (0,9)},
      { (10, -7) \ar@{<-} (0,3)},
     {(15, -1)\ar@{->} (10, -7)},
{(15, -1)\ar@{->}(0,9)},
     {(15, -1)\ar@{<-}(0,3)},
     {(15, -1)\ar@{->}(-10,-7)},
     {(0,3)\ar@{->} (0,9)}
\end{xy} 
\end{equation}
if we are given faces $\eta_{0 i 4}$ and $\eta_{0 ij}$ in $X_2$,
where $i, j \in \{ 1, 2, 3\}$, then the following two methods to
determine the face $\eta_{1 2 3}$ give the same element in $X_2$:
\begin{enumerate} 
\item $\eta_{1 2 3}= m_0( \eta_{0 23},\eta_{01 3}, \eta_{0 1 2})$;
\item we first obtain $\eta_{i j 4}$'s using the $m_i$'s on the $\eta_{0 i 4}$'s; then we have
\[\eta_{123}= m_3( \eta_{2 3 4}, \eta_{1 3 4}, \eta_{1 2 4}).\]
\end{enumerate}
\end{enumerate}
\end{enumerate}
\end{pdef}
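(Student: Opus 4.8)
The plan is to deduce this finite description from Definition~\ref{defngroupoids} by passing to underlying sets. Since $\cC$ is assumed to carry a faithful forgetful functor to $\mathrm{Set}$ and $\cT$ is subcanonical, a limit of objects of $\cC$, viewed in sheaves via Yoneda, lies in $\cC$ as soon as it is representable, and a morphism between such objects is determined by its effect on points. Hence the purely combinatorial bijection between the simplicial data and the finite data — worked out set-theoretically for $n=2$ by Duskin \cite{duskin2} — transports directly to $(\cC,\cT)$, and the only genuinely new content is to verify at each stage that the spaces involved are representable in $\cC$ and that the asserted maps are covers in $\cT$, rather than mere maps of sheaves.

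For the direction from a $2$-groupoid object $X$ to the finite data, I would keep the face and degeneracy maps $d^n_i,s^n_i$ for $n\le 2$ coming from the simplicial structure; they satisfy \eqref{eq:face-degen} automatically. The condition $\Kan(1,j)$ says that $\bs=d^1_1$ and $\bt=d^1_0$ are covers, giving \eqref{itm:st}, and $\Kan(2,j)$ says that the maps $X_2\to\L(X)_{2,j}=\hom(\L[2,j],X)$ are covers, giving \eqref{itm:dd}. The uniqueness condition $\Kan!(3,j)$ provides an isomorphism $X_3\cong\hom(\L[3,j],X)=\L(X)_{3,j}$ in $\cC$; composing its inverse with the face map $d_i\colon X_3\to X_2$ defines $m_i\colon\L(X)_{3,i}\to X_2$. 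The isomorphisms in \eqref{itm:m-iso} are then the composites $\L(X)_{3,i}\cong X_3\cong\L(X)_{3,j}$. The degeneracy compatibilities \eqref{coco} are instances of the simplicial identities in \eqref{eq:face-degen}, and the associativity \eqref{pic:5-gon} follows from $\Kan!(4,j)$: both prescriptions compute a fixed face of the unique $4$-simplex filling the relevant horn, so they agree.

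Conversely, given the finite data I would set $X_3:=\L(X)_{3,j}$ — independent of $j$ precisely because of the isomorphisms \eqref{itm:m-iso} — with face maps given by the projections and by $m_j$, and reconstruct $X$ by \eqref{eq:nerve}, that is $X=\Cosk^{3}\Sk^{3}(X_3\to X_2\to X_1\to X_0)$. The horn spaces $\L(X)_{2,j}$ and $\L(X)_{3,j}$ are representable in $\cC$ by the remark preceding the statement, itself a consequence of \eqref{itm:st} and \eqref{itm:dd}. The conditions $\Kan(1,j)$, $\Kan(2,j)$ are then \eqref{itm:st}, \eqref{itm:dd}, while $\Kan!(3,j)$ holds by construction; the data \eqref{coco} and \eqref{pic:5-gon} are exactly what is needed to make these identifications into a single consistent simplicial object rather than disjoint choices. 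For $k>3$ coskeletality gives $X_k\cong\hom(\L[k,j],X)$, so $\Kan!(k,j)$ is automatic, and representability of these higher matching objects follows inductively from Lemmas~\ref{lemma:indct-mfd-equi} and \ref{lemma:fp} using the covers already established. That the two constructions are mutually inverse is immediate on points and hence, by subcanonicity, in $\cC$.

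The main obstacle is representability, not the combinatorics. Concretely, one must confirm that every fibre product entering $\L(X)_{3,j}$, $X_3$ and the higher matching objects exists in $\cC$ and that the relevant structure maps are covers; this is exactly what the cover axioms of $\cT$ (stability of covers under pull-back, closure under composition) together with the inductive representability of Lemma~\ref{lemma:indct-mfd-equi} and the pull-back stability of Lemma~\ref{lemma:fp} are designed to supply. The subtlest point is the associativity coherence \eqref{pic:5-gon}: one must know that the two reconstructions of $\eta_{123}$ agree as the \emph{same} morphism into $X_2$, not merely fibrewise; but since $\cT$ is subcanonical this again reduces to the set-theoretic identity, which is Duskin's.
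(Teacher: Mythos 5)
Your first direction (from Def.~\ref{defngroupoids} to the finite data) is essentially the paper's: the $m_i$ come from $\Kan!(3,i)$ composed with $d_i$, item \ref{itm:m-iso} from composing these isomorphisms, \eqref{coco} from the simplicial identities, and associativity from the unique filler of a $4$-horn. That part is fine.

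The gap is in the converse direction, which is where the paper does all the real work. You assert that after setting $X=\Cosk^{3}\Sk^{3}(\cdots)$, ``coskeletality gives $X_k\cong\hom(\L[k,j],X)$, so $\Kan!(k,j)$ is automatic'' for $k>3$, with representability delegated to Lemmas~\ref{lemma:indct-mfd-equi} and~\ref{lemma:fp}. Neither claim holds as stated. First, those lemmas concern pull-backs $\hom(T\to S,Z\to X)$ attached to a strict map of two simplicial objects and are not the tool for showing that the matching/horn objects of the single simplicial object being \emph{constructed} are representable; the relevant induction is the one the paper carries out over the collapsible sets $S_j[m]$ (built by attaching horns of dimension $\le 2$, in the style of \cite[Lemma 2.4]{henriques}), which uses only the covers of items \ref{itm:st} and \ref{itm:dd}. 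Second, and more seriously, $\Kan(k,j)$ for $k\ge 4$ is \emph{not} automatic from $3$-coskeletality: for $k=4$ the horn $\L[4,j]$ determines all the $2$-faces of the missing $3$-face, but the existence of a filler requires that this $3$-face actually lie in $X_3\cong\L(X)_{3,0}$, i.e.\ that $\eta_0=m_0(\eta_1,\eta_2,\eta_3)$ holds for it --- and that identity \emph{is} the associativity axiom, not a consequence of coskeletality. For general $m$ the paper's proof that $X_m=\hom_2(S_0[m],X_2)$ proceeds by adding a new vertex, generating the new $2$-faces with $m_0$, and then running a case analysis (the ``red/blue face'' argument) to check that every tetrahedron so formed lies in $X_3$; the only nontrivial case is resolved by the pentagon \eqref{pic:5-gon}. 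Your sentence that \eqref{coco} and \eqref{pic:5-gon} are ``exactly what is needed'' names the right ingredients but replaces this argument rather than giving it; the appeal to Duskin via the forgetful functor covers the set-level bijection but not the representability of $X_m$ in $\cC$, which the paper explicitly flags as the nontrivial point.
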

\begin{remark} 
  Set-theoretically, this definition is that of \cite{duskin}. In fact, it is
  enough to use one of the four multiplications $m_j$ as therein,
  since one determines the others by item \ref{itm:m-iso}. However, we
  use all the four multiplications here and later on in the proof to
  make it geometrically more direct.  Here we see that this idea
  also applies well to, and even brings convenience to, other categories. 

  For example, in the case of a Lie $2$-groupoid, i.e.\ when $\cC$ is the
  category of Banach manifolds with surjective submersions as covers,
  although the surjectivity of the maps in the $2$-Kan condition
  \eqref{itm:dd} insures the existence of the usual ($2$-) multiplication
  $m: X_1\times_{\bt, X_0, \bs} X_1 $ and inverse $i: X_1 \to X_1$
  as explained in the introduction, these two maps are not necessarily continuous, or
  smooth, and $m$ is not necessarily associative on the nose. For example, the Lie $2$-groupoids
  coming from integrating Lie algebroids have two models
  \cite{z:lie2}: the finite-dimensional one does not have a
  continuous $2$-multiplication and the infinite-dimensional one has a
  smooth multiplication which does not satisfy associativity on the
  nose.

On the other hand, only having the usual $2$-multiplication $m$ and
inverse map $i$, it is not guaranteed that the maps in the $2$-Kan
condition \eqref{itm:dd} are submersions even when $m$ and $i$ are
smooth. But being submersions is in turn very important to prove
that $X_{n\geq 3}$ are smooth manifolds. Hence in the
differential category, we cannot replace the $2$-Kan condition by
the usual $2$-multiplication and inverse.
\end{remark}

\paragraph{The nerve of $X_2
\Rrightarrow X_1 \Rightarrow X_0$} To show that what we defined
just now is the same as Def.\ \ref{defngroupoids}, we form the \emph
{nerve} of a $2$-groupoid $X_2 \Rrightarrow X_1 \Rightarrow X_0$
in Prop-Def.\ \ref{def:finite-2gpd}. We first define
\[X_3=\{ (\eta_0, \eta_1, \eta_2, \eta_3)\mid \eta_0=m_0(\eta_1, \eta_2, \eta_3),
(\eta_1, \eta_2, \eta_3)\in \Lambda(X)_{3,0}\}. \] Then $X_3 \cong
\Lambda(X)_{3,0}$ is representable in $\cC$. Moreover, we have the obvious
face and degeneracy maps between $X_3$ and $X_2$,
\[
\begin{split}
& d^3_i( \eta_0, \eta_1, \eta_2, \eta_3)=\eta_i, \; i=0,\dots,3 \\
& s^2_0(\eta)=(\eta, \eta, s_0 \circ d_1 (\eta), s_0 \circ d_2 (\eta) ), \\
& s^2_1(\eta)=(s_0\circ d_0(\eta), \eta, \eta, s_1\circ d_2(\eta)), \\
& s^2_2(\eta)=(s_1\circ d_0(\eta), s_1\circ d_1(\eta), \eta,
\eta).
\end{split}
\]
The coherency \eqref{coco} insures that $s^2_i(\eta)\in X_3$. It
is also not hard to see that these maps together with the $d^{\leq
2}_i$'s and $s^{\leq 1}_i$'s satisfy \eqref{eq:face-degen} for
$n\leq 3$.

Then the nerve can be easily described by \eqref{eq:nerve}.
More concretely, $X_m$ is made up of those $m$-simplices whose
$2$-faces are elements of $X_2$ and such that each set of four
$2$-faces gluing together as a $3$-simplex is an element of $X_3$.
That is,
\[ X_m = \{ f\in \hom_2( \sk_2(\Delta_m), X_2)\mid f\circ (d_0\times d_1 \times d_2 \times d_3)(\sk_3(\Delta_m))\subset X_3\} ,\]
where $\hom_2$ denotes the homomorphisms restricted to the $0,1,2$
level and $X_2$ is understood as the tower $X_2 \Rrightarrow
X_1\Rightarrow X_0$ with all degeneracy and face maps. Then there
are obvious face and degeneracy maps which naturally satisfy
\eqref{eq:face-degen}.

However what is nontrivial is that the associativity of the $m_i$'s assures that $X_m$
is representable in $\cC$. We prove this by an inductive argument. Let
$S_j[m]$ be the the contractible simplicial set
whose sub-faces all contain the vertex $j$ and whose only
non-degenerate faces are of  dimensions $0$, $1$ and $2$. Then similarly
to \cite[Lemma 2.4]{henriques}, we now show that $\hom_2(S_j[m], X_2) $
is representable in $\cC$. Since $S_j[m]$ is constructed by adding
$0,1,2$-dimensional faces, it is formed by the procedure
\[
\xymatrix{ S' \ar[r] & S \\
           \L[m , j] \ar[u] \ar[r] & \Delta[m]\ar[u]}
\]
with $m\leq 2$. The dual pull-back diagram shows that
$\hom_2(S_j[m], X_2)$ is representable by induction
\[
\xymatrix{ \hom_2(S', X_2)\ar[d] &  \hom_2(S, X_2) \ar[l] \ar[d] \\
           \hom_2(\L[m,j] , X_2) & \hom_2(\D[m], X_2) ,  \ar[l] }
\]
since $\hom_2(\D[m], X_2) \to \hom_2(\L[m,j], X_2) $ are
covers by items \ref{itm:st} and \ref{itm:dd}
in the Prop-Def \ref{def:finite-2gpd}.

Next we use induction to show that $X_m = \hom_2( S_0[m], X_2)$.
Similarly we will have $X_m = \hom_2( S_j[m], X_2)$.  It is clear
that $\tilde{f}\in X_m$ restricts to $\tilde{f}|_{S_0[m]} \in
\hom_2(S_0[m], X_2)$. We only have to show that $f\in
\hom_2(S_0[m], X_2)$ extends uniquely to $\tilde{f}\in X_m$. It is
certainly true for $n=0, 1, 2, 3$ just by definition. Suppose
$X_{m-1}= \hom_2( S_0[m-1], X_2)$. Then to get $f\in
\hom_2(S_0[m], X_2)$ from $f'\in \hom_2( S_0[m-1], X_2)$, we add a
new point $m$ and $(m-1)$ new faces $(0, i, m)$, $i\in \{1, 2,
\dots, m-1\}$ and dye them red\footnote{More precisely, they are
the image of these under the map $f$.}. Using $3$-multiplication
$m_0$, we can determine face $(i, j, m)$ by $(0, i, m)$, $(0, j,
m)$ and $(0, i,j)$ and dye these newly decided faces blue.  Now we
want to see that each of the four faces attached together are in $X_3$;
then $f$ is extended to $\tilde{f} \in X_m$. We consider various
cases:
\begin{enumerate}
\item if none of the four faces contains the vertex $m$, then by
the induction condition, they are in $X_3$.
\item if one of the four faces contains $m$, then
there are three faces containing  $m$; we again have two
sub-cases:
\begin{enumerate}
\item if those three contain only one blue face of the form
$(i, j, m)$, $i, j \in \{ 1, \dots, (m-1)\}$, then the four faces
must contain three red faces and one blue face. According to our
construction, these four faces are in $X_3$;
\item if those three contain more than one blue face, then they must contain exactly three blue faces.  Then according to
associativity (inside the $5$-gon $(0, i, j, k, m)$), these four
faces are also in $X_3$.
\end{enumerate}
\end{enumerate}
Now we finish the induction, hence $X_m$ is representable in $\cC$ and it is
determined by the first three layers. Similarly we can prove $\hom(\L[m,j],
X)=\hom_2(S_0[m], X_2)$. Hence $\hom(\L[m,j], X) =X_m$, and we finish the proof of the
Prop-Def.\ \ref{def:finite-2gpd}, which is summarized in the following
two lemmas:

\begin{lemma}The nerve $X$ of a $2$-groupoid object $X_2\Rrightarrow X_1
  \Rightarrow X_0$ in $(\cC, \cT)$ as in
Prop-Def.\ \ref{def:finite-2gpd} is a $2$-groupoid object in $(\cC, \cT)$ as in Def.\
\ref{defngroupoids}.
\end{lemma}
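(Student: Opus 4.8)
The plan is to verify that the nerve $X$, constructed via \eqref{eq:nerve} from the three layers $X_2 \Rrightarrow X_1 \Rightarrow X_0$, satisfies the two Kan conditions of Def.\ \ref{defngroupoids}: that $\hom(\D[m],X)\to\hom(\L[m,j],X)$ is a cover for all $0\le j\le m\ge 1$, and is an isomorphism for all $0\le j\le m>2$. The entire content has in fact been assembled in the discussion preceding the statement, so the proof consists in organizing those facts. The crucial identity established there is $\hom(\L[m,j],X)=\hom_2(S_0[m],X_2)=X_m$, so that the restriction map $\hom(\D[m],X)\to\hom(\L[m,j],X)$ becomes the identity on $X_m$ for all $m\ge 3$; this immediately gives the required isomorphisms $\Kan!(m,j)$ for $m>2$.

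For the low-dimensional cases I would argue directly from Prop-Def.\ \ref{def:finite-2gpd}. The conditions $\Kan(1,j)$ assert that $\bt=d^1_0$ and $\bs=d^1_1$ are covers, which is precisely the $1$-Kan axiom \ref{itm:st}. The conditions $\Kan(2,j)$ assert that the maps $X_2\to\L(X)_{2,j}$ are covers for $j=0,1,2$, which is exactly the $2$-Kan axiom \ref{itm:dd}, since $\hom(\D[2],X)=X_2$ and $\hom(\L[2,j],X)=\L(X)_{2,j}$ by the definition of the horn spaces in terms of the three layers. Thus the cover part of the Kan conditions in dimensions $1$ and $2$ is built into the defining data.

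The main point, and the step I expect to be the chief obstacle, is to see that $X_m=\hom_2(S_0[m],X_2)$ is \emph{representable} in $\cC$ and that the claimed identifications hold at the level of objects of $\cC$, not merely set-theoretically. This is handled by the inductive argument already carried out above: one shows $\hom_2(S_j[m],X_2)$ is representable by decomposing $S_j[m]$ into a sequence of attachments of $0$-, $1$-, and $2$-dimensional faces, so that each stage is a pull-back whose bottom map $\hom_2(\D[m],X_2)\to\hom_2(\L[m,j],X_2)$ is a cover by axioms \ref{itm:st} and \ref{itm:dd}; representability then propagates by the property of covers. The associativity condition \ref{pic:5-gon} is what guarantees that the blue faces produced by $m_0$ are mutually consistent, so that the extension $f\mapsto\tilde f$ from $\hom_2(S_0[m],X_2)$ to $X_m$ is well-defined and unique through the case analysis given (none of the four faces containing $m$; exactly one blue face; exactly three blue faces). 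Granting all of this, the nerve $X$ meets every requirement of Def.\ \ref{defngroupoids}, and I would conclude by recording that the Kan covers in dimensions $1,2$ come from the defining axioms while all higher Kan conditions are isomorphisms by the representability identity $\hom(\L[m,j],X)=X_m$.
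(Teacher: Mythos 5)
Your proposal is correct and follows the paper's own route: the paper gives no separate argument for this lemma, treating it precisely as a summary of the preceding paragraph, where the inductive representability of $X_m=\hom_2(S_0[m],X_2)$, the identification $\hom(\L[m,j],X)=X_m$ (yielding $\Kan!(m,j)$ for $m>2$), and the reduction of $\Kan(m,j)$ for $m=1,2$ to the $1$-Kan and $2$-Kan axioms are all established. Your organization of those facts, including singling out representability and the role of associativity in the case analysis as the essential content, matches the paper's argument.
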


\begin{lemma}
The first three layers of a $2$-groupoid object in $(\cC, \cT)$ as in Def.\
\ref{defngroupoids} is a $2$-groupoid object in $(\cC, \cT)$ as in Prop-Def.\
\ref{def:finite-2gpd}.
\end{lemma}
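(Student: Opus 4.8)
The plan is to extract, from a simplicial $2$-groupoid $X$ in the sense of Def.\ \ref{defngroupoids}, every piece of data and every axiom of Prop-Def.\ \ref{def:finite-2gpd}, reading them all off from the simplicial structure and the Kan conditions at levels $1$ through $4$; together with the preceding lemma this establishes that the two descriptions are interchangeable. Throughout I identify $\hom(\L[m,j],X)$ with the fibre-product horn space $\L(X)_{m,j}$: since $\L[m,j]$ has non-degenerate simplices only in dimensions $\le m-1$, for $m\le 3$ it involves only $X_0,X_1,X_2$ and the identification is immediate. The face and degeneracy maps $d^n_i,s^n_i$ ($n\le 2$) together with \eqref{eq:face-degen} are inherited verbatim from $X$. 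For item \ref{itm:st}, I compute $\hom(\L[1,1],X)=X_0$ and observe that the restriction $X_1\to X_0$ is exactly $\bt=d^1_0$, so $\Kan(1,1)$ makes $\bt$ a cover, and symmetrically $\Kan(1,0)$ makes $\bs$ a cover. For item \ref{itm:dd}, the restriction $X_2\to\hom(\L[2,j],X)=\L(X)_{2,j}$ is precisely the product of the two face maps bounding the horn, so $\Kan(2,0),\Kan(2,1),\Kan(2,2)$ yield that $d_1\times d_2$, $d_0\times d_2$ and $d_0\times d_1$ are covers. By the remark preceding Prop-Def.\ \ref{def:finite-2gpd}, items \ref{itm:st} and \ref{itm:dd} now guarantee that $\L(X)_{2,j}$ and $\L(X)_{3,i}$ are representable in $\cC$.

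I then define the $3$-multiplications. As $3>n=2$, the condition $\Kan!(3,i)$ makes the restriction $r_i\colon X_3\xrightarrow{\cong}\L(X)_{3,i}$ an isomorphism in $\cC$, and I put $m_i:=d^3_i\circ r_i^{-1}$, i.e.\ ``fill the horn and read off the missing $i$-th face''. Item \ref{itm:m-iso} is then automatic, since the induced map $\L(X)_{3,i}\to\L(X)_{3,j}$ is by construction $r_j\circ r_i^{-1}$, a composite of isomorphisms. The coherences \eqref{coco} come from the simplicial identities applied to the degeneracies $s^2_0,s^2_1,s^2_2\colon X_2\to X_3$: for example $r_1\circ s^2_0=(d^3_0 s^2_0,\,d^3_2 s^2_0,\,d^3_3 s^2_0)=(\id,\,s_0 d_1,\,s_0 d_2)$, hence $r_1^{-1}\circ(\id,s_0 d_1,s_0 d_2)=s^2_0$ and $m_1\circ(\id,s_0 d_1,s_0 d_2)=d^3_1 s^2_0=\id$, which is the first line of \eqref{coco}; the degeneracies $s^2_1$ and $s^2_2$ give the other two lines. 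Because $\cT$ is subcanonical (Assumptions \ref{assump:1}), $\cC$ embeds faithfully into sheaves, so all these equalities of morphisms may be checked on generalized points, where the manipulations above are literal.

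The substantive point is the associativity \eqref{pic:5-gon}, and the key leverage is that $4>n=2$, so $\Kan!(4,0)$ gives an isomorphism $X_4\cong\L(X)_{4,0}$. Let $P$ be the representable object parameterizing the six $2$-faces containing the vertex $0$, namely $\eta_{012},\eta_{013},\eta_{023},\eta_{014},\eta_{024},\eta_{034}$. Feeding the four relevant triples to $m_0$ produces the four $3$-faces $\tau_{0234},\tau_{0134},\tau_{0124},\tau_{0123}$ containing $0$; one checks face by face that they agree on overlaps---each shared $2$-face is one of the six given ones---so they assemble to a morphism $P\to\L(X)_{4,0}$, and composing with the isomorphism $\L(X)_{4,0}\cong X_4$ yields a universal $4$-simplex $\Phi\colon P\to X_4$. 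Method (a) computes $\eta_{123}=d^3_0\tau_{0123}=d^3_0 d^4_4\Phi$, because $\tau_{0123}=d^4_4\Phi$ is the $m_0$-filler over $\eta_{023},\eta_{013},\eta_{012}$. Method (b) first recovers $\eta_{234},\eta_{134},\eta_{124}$ as $d^3_0 d^4_1\Phi,\,d^3_0 d^4_2\Phi,\,d^3_0 d^4_3\Phi$, and then, noting that $d^4_0\Phi$ is by $\Kan!(3,3)$ the $m_3$-filler carrying exactly these three $2$-faces, computes $\eta_{123}=d^3_3 d^4_0\Phi$. The two answers agree by the single simplicial identity $d^3_0 d^4_4=d^3_3 d^4_0$ (the case $i=0<j=4$ of $d_i d_j=d_{j-1}d_i$), both being the $2$-face $\{1,2,3\}$ of $\Phi$.

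I expect this last step to be the only real obstacle---not for any conceptual depth but because it demands careful bookkeeping: each $m_i$ must be matched with the correct face of the universal $4$-simplex $\Phi$, and one must pin down the precise simplicial identity that collapses the two computations. The two delicate points are (i) verifying that the four $m_0$-fillers are genuinely compatible along their shared $2$-faces, so that they define a map into $\L(X)_{4,0}$, and (ii) keeping the entire argument at the level of $\cC$-morphisms rather than set maps, which is legitimate precisely because $\Kan!(3,i)$ and $\Kan!(4,0)$ furnish honest isomorphisms $X_3\cong\L(X)_{3,i}$ and $X_4\cong\L(X)_{4,0}$ in $\cC$ and $\cC$ embeds faithfully in sheaves. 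Everything else reduces to the $\Kan$/$\Kan!$ dictionary and the degeneracy identities once the horn spaces are known to be representable.
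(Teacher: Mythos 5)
Your proposal is correct and follows exactly the route the paper sketches: the paper's proof is a one-line remark that the $3$-multiplications come from $\Kan!(3,j)$ and associativity from $\Kan!(3,0)$ together with the level-$4$ Kan condition, and your argument is a careful elaboration of precisely that (defining $m_i=d^3_i\circ r_i^{-1}$, reading \eqref{coco} off the degeneracies $s^2_i$, and deriving associativity by assembling a universal $4$-simplex and applying $d_0d_4=d_3d_0$). The only cosmetic difference is that you invoke $\Kan!(4,0)$ where the paper cites $\Kan(4,0)$, which is harmless since uniqueness holds at all levels above $2$.
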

\begin{proof} The proof is more complicated and similar to the
case of $1$-groupoids in the introduction. Here we point out that
the $3$-multiplications $m_j$ are given by $\Kan!(3, j)$ and the
associativity is given by $\Kan!(3, 0)$ and $\Kan(4, 0)$.
\end{proof}

\section{Stacky groupoids in various categories} \label{sect:sgpd}
Given a category $\cC$ with a singleton Grothendieck pretopology $\cT$ (not
necessarily satisfying Assumptions \ref{assump:1}), we can
develop the theory of stacks \cite{SGA4}. The Yoneda lemma also holds here;
namely we can embed $\cC$ into the $2$-category of stacks built upon
$(\cC, \cT)$. We call such stacks \emph {representable stacks}. Moreover, weaker
than this, a kind of nice
stack, which we call a \emph {presentable stack},  corresponds to the
groupoid objects in $\cC$. For this one needs another singleton Grothendieck
pretopology $\cT'$.

The theory of presentable stacks in
$(\cC, \cT, \cT')$ (see Table \ref{table:2}) has been developed over the past few decades in the algebraic
category, where they are known as Delign--Munford (DM) stacks and
Artin stacks in the \'etale and general cases respectively (see for example \cite{v1} for a good
summary), and recently in the differential
category by \cite{bx1,
  metzler, pronk} and \cite{m-orbi} (in the context of orbifolds) and the topological
category by \cite{gep-hen, metzler, noohi:top}. We
refer the reader to these references for these concepts and only
sketch the idea here. 

First, to distinguish, we call a cover in  topology $\cT'$ a \emph {projection}. We call a morphism $f: \cX \to \cY$ between stacks in
$(\cC, \cT)$ a \emph
{representable projection} if for every map $U\to \cY$ for $U\in
\cC$, the pull-back map $\cX \times_U \cY \to U$ is a projection in
$\cC$ (this implies that $\cX \times_U \cY$ is representable in
$\cC$). A morphism $f: \cX \to \cY$ between stacks in $(\cC, \cT)$ is an
\emph {epimorphism} if for every $U\to \cY$ with $U\in \cC$, there
exists a cover $V\to U$ in $T$ fit in the following $2$-commutative diagram 
\[ \xymatrix{ V \ar[r]^{\exists} \ar[d] & U \ar[d]_\forall \\
\cX \ar[r]^{f} & \cY.}\]
Then a \emph {presentable stack}\footnote{This is a
slightly different set-up to that in the usual references, but it says exactly
the same thing by \cite[Lemma 2.13]{bx1}. } in $(\cC, \cT, \cT')$ is a stack $\cX$
which possesses a \emph {chart} $X \in \cC$ such that $X \to \cX$ is a
representable projection and an epimorphism w.r.t. $\cT$.  

To define the $2$-category of groupoids
in $(\cC, \cT, \cT')$, we need to
define first a \emph {surjective projection} between presentable stacks.
We adopt the definition in \cite[Section 3]{tz}, which is that $f: \cX \to
\cY$ is a \emph {projection} if $X\times_{\cY} Y
\to Y$ is a projection where $X$ and $Y$ are charts of $\cX$ and
$\cY$ respectively. $f$ is further a \emph {surjective projection} if it
is an epimorphism of stacks. If $f: \cX \to Y$ is a surjective
projection from a presentable stack to an object in $\cC$, then the
fibre product $\cX
\times_Y \cZ$ for any map $\cZ \to Y$ is again a presentable
stack. Then a \emph {groupoid object}
in $(\cC, \cT, \cT')$ is as we imagine: $G:=G_1 \rra G_0$ with $G_i \in
\cC$, all the groupoid structure maps morphisms in $\cC$, and source
and target maps surjective projections. One subtle point
is that for a principal $G$ bundle $X$ over $S$,
the map $X\to S$ has to be a surjective projection.  

The upshot of this
theory is that the $2$-category of presentable stacks is equivalent to
the $2$-category of groupoids\footnote{In the algebraic category, usually we
need more conditions for such a groupoid to present an algebraic
stack. For example, $(\bt, \bs): G_1 \to G_0 \times G_0$ is separated
and quasi-compact \cite[Prop. 4.3.1]{lmb}. But in the differential and
topological categories, we do not require extra conditions. See Table \ref{table:2}.} in $(\cC, \cT, \cT')$.  This implies that
a presentable stack is presented by a groupoid object (which may not be
unique), and a morphism between presentable stacks is presented by an
H.S. bibundle. There is also a correspondence on the level of
$2$-morphisms. 

Moreover,  we have
\begin{lemma} \label{lemma:fur-ass}
If $\cT'$ satisfies Assumptions \ref{assump:1} with terminal object $*'$
and any map $X\to *'$ is an epimorphism w.r.t. $\cT$ for all $X\in \cC$, then surjective projections serve as covers of a
certain singleton Grothendieck pretopology $\cT''$ which satisfies Assumptions
\ref{assump:1}.
\end{lemma}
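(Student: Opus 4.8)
The plan is to verify, for the collection $\cT''$ of surjective projections between presentable stacks, the three axioms of a singleton Grothendieck pretopology together with the two conditions in Assumptions~\ref{assump:1}, the terminal object being $*'$ regarded as a representable (hence presentable) stack. Throughout I will reduce every statement about stacks to a statement in $\cC$ about $\cT'$-covers by passing to charts, using that a representable projection $X\to\cX$ is stable under base change and that the properties of being a projection, resp.\ an epimorphism w.r.t.\ $\cT$, are tested on charts.

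Closure under isomorphisms and composition is the easy part. An isomorphism $\cX\to\cY$ is an epimorphism, and testing on charts $X\to\cX$, $Y\to\cY$ the map $X\times_\cY Y\to Y$ is a base change of the representable projection $X\to\cY$, hence a $\cT'$-cover; so isomorphisms lie in $\cT''$. For composition, epimorphisms compose, and the projection property composes as well: given surjective projections $\cX\to\cY\to\cZ$ with charts $X,Y,Z$, one inserts $Y$ and uses that $\cT'$-covers are closed under composition and base change to conclude that $X\times_\cZ Z\to Z$ is a projection.

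The main work, and the expected obstacle, is pullback-stability: given a surjective projection $p\colon\mathcal{U}\to\cX$ with chart $U_0\to\mathcal{U}$ and an arbitrary morphism $q\colon\cY\to\cX$ of presentable stacks, I must show that $\mathcal{W}:=\cY\times_\cX\mathcal{U}$ is again a presentable stack and that $\mathcal{W}\to\cY$ is a surjective projection. The strategy is to pick a chart $a\colon X\to\cX$ with $X\in\cC$ and to observe first that $\mathcal{U}\times_\cX X\to X$ is a surjective projection onto an object of $\cC$: its chart $U_0\times_\cX X\in\cC$ exists because $a$ is a representable projection, the map $U_0\times_\cX X\to X$ is a $\cT'$-cover precisely by the definition of $p$ being a projection, and the epimorphism property survives the base change along $a$. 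Since $\mathcal{W}\times_\cX X\cong(\cY\times_\cX X)\times_X(\mathcal{U}\times_\cX X)$ is then a fibre product over the object $X\in\cC$ of a presentable stack with a surjective projection onto $X$, the fibre-product result already established above (for surjective projections onto objects of $\cC$) guarantees that $\mathcal{W}\times_\cX X$ is presentable; as $\mathcal{W}\times_\cX X\to\mathcal{W}$ is a base change of the chart $a$, it follows that $\mathcal{W}$ is presentable. Finally $\mathcal{W}\to\cY$ is an epimorphism because epimorphisms are stable under base change, and it is a projection by testing on charts of $\mathcal{W}$ and $\cY$ and again reducing to $\cT'$-covers in $\cC$; hence it is a surjective projection. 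The delicate point is purely the bookkeeping of charts that makes the abstract $2$-fibre product of stacks representable by objects of $\cC$; everything else is formal.

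It remains to check Assumptions~\ref{assump:1} for the pair $(\text{presentable stacks},\cT'')$. For any presentable stack $\cX$ with chart $X\to\cX$, the map $\cX\to*'$ is a projection because, on charts, $X\times_{*'}*'=X\to*'$ is a $\cT'$-cover by the first part of Assumptions~\ref{assump:1} applied to $\cT'$; and it is an epimorphism because the composite $X\to\cX\to*'$ is, by hypothesis, an epimorphism w.r.t.\ $\cT$, and if a composite is an epimorphism then so is its second factor. Thus $\cX\to*'$ is a surjective projection, which gives the terminal-object condition. Subcanonicity of $\cT''$ follows from descent for stacks: a surjective projection $\mathcal{U}\to\cX$ is an epimorphism and exhibits $\cX$ as the stacky quotient of the groupoid $\mathcal{U}\times_\cX\mathcal{U}\rightrightarrows\mathcal{U}$, so every representable $2$-functor $\mathrm{Hom}(-,\cZ)$ satisfies the gluing condition along surjective projections, which is exactly the statement that $\cT''$ is subcanonical.
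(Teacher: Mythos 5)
You have read the lemma as constructing a pretopology on the $2$-category of presentable stacks, but in the paper $\cT''$ is a singleton pretopology on $\cC$ itself: its covers are those morphisms of $\cC$ that are simultaneously $\cT'$-covers and epimorphisms w.r.t.\ $\cT$. This reading is forced by the corollary immediately following the lemma and by the later use of ``$n$-groupoid objects in $(\cC,\cT'')$'', where the Kan maps are required to be $\cT''$-covers between objects of $\cC$ (see also Table 2, whose $\cT''$-covers are maps of spaces, not of stacks). Because of this, almost all of the machinery you deploy --- charts of $\mathcal{W}=\cY\times_\cX\mathcal{U}$, presentability of $2$-fibre products, and the descent argument for subcanonicity --- is beside the point: for $X,Y,Z\in\cC$ the pullback $Y\times_X Z$ already exists in $\cC$ because $Z\to X$ is a $\cT'$-cover, the map $Y\times_X Z\to Y$ is a $\cT'$-cover by pullback-stability of $\cT'$, and subcanonicity of $\cT''$ is immediate from subcanonicity of $\cT'$ since every $\cT''$-cover is in particular a $\cT'$-cover and the relevant equalizer diagrams are the same. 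Your descent argument for subcanonicity of a pretopology on a $2$-category is, as written, only a sketch of a genuinely harder statement that the lemma does not require.

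More importantly, the one verification that the paper's proof actually carries out is precisely the step you dispose of with the phrase ``epimorphisms are stable under base change'': one must check that if $Z\to X$ is an epimorphism w.r.t.\ $\cT$ and $Y\to X$ is any morphism in $\cC$, then $Y\times_X Z\to Y$ is again an epimorphism w.r.t.\ $\cT$. The claim is true and the argument is short --- given $U\to Y$, compose to $U\to X$, use the epimorphism property of $Z\to X$ to find a $\cT$-cover $V\to U$ together with a lift $V\to Z$ making the outer rectangle $2$-commute, and then the universal property of $Y\times_X Z$ produces the required $V\to Y\times_X Z$ over $U\to Y$ --- but since this is the entire content of the paper's proof (everything else being declared obvious there), you should supply it rather than cite it as formal.
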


\begin{proof} The only thing which is not obvious is to check that if
$Z\to X$ is an epimorphism in $(\cC, \cT )$ and $Y\to X$ is a morphism in $\cC$, then the
pull-back $Y\times_X Z \to Y$ is still an epimorphism in $(\cC, \cT) $, for $X, Y, Z \in \cC$.
For any $U\to Y$, we have a composed morphism $U\to X$. Since $Z\to X$
is an epimorphism,  there
exists a cover $V\to U$ in $T$, such that the rectangle diagram in the
diagram below
commutes
\[
\xymatrix{ V \ar[r] \ar@/_3pc/[dd] \ar@{.>}[d]&  U \ar[d] \\
Z\times_X Y \ar[r] \ar[d] & Y \ar[d] \\
Z \ar[r] & X .
}
\]
Hence there exists a morphism $V\to Z\times_X Y$  such that the up-level
small square commutes; that is, $Z\times_X Y \to Y$ is an epimorphism.
\end{proof}

Therefore by definition, we have
\begin{cor}
The groupoid objects and H.S. morphisms in $(\cC, \cT, \cT')$ are exactly
the same as the (\/$1$-) groupoid objects and H.S. morphisms in $(\cC, \cT'')$. 
\end{cor}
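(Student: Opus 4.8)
The plan is to show that, once each side is unwound, both describe the very same data in $\cC$ subject to one and the same condition, so that Lemma \ref{lemma:fur-ass} identifies them verbatim rather than merely up to equivalence. First I would spell out a groupoid object in $(\cC, \cT, \cT')$: it is a diagram $G_1 \rra G_0$ in $\cC$ equipped with the ordinary groupoid structure maps, the sole extra hypothesis being that the source and target $\bs_G,\bt_G\colon G_1\to G_0$ are \emph{surjective projections}. Next I would unwind a $1$-groupoid object in $(\cC, \cT'')$ in the sense of Def.\ \ref{defngroupoids}. Since $\Cosk^{2}$ fixes a $1$-groupoid object (as shown above), such an object is the nerve of an ordinary groupoid $G_1 \rra G_0$ in $\cC$, exactly as in the Introduction, and the only topological content of its Kan conditions is carried at level one: $\Kan(1,0)$ and $\Kan(1,1)$ assert precisely that the two restriction maps $X_1\to X_0$, namely $\bt_G$ and $\bs_G$, are \emph{covers in $\cT''$}. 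The higher $\Kan$ and $\Kan!$ conditions are automatically satisfied by the nerve and impose no further topology, just as in the $1$-groupoid discussion of the Introduction.

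With both sides reduced to ``an ordinary groupoid in $\cC$ whose source and target satisfy one condition,'' the equality of objects is immediate: by Lemma \ref{lemma:fur-ass} the covers of $\cT''$ are \emph{exactly} the surjective projections, so ``$\bs_G,\bt_G$ are surjective projections'' and ``$\bs_G,\bt_G$ are $\cT''$-covers'' are literally the same requirement. Hence the two classes of groupoid objects coincide on the nose.

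For the morphisms I would run the identical dictionary. An H.S.\ bibundle $E$ between $G$ and $G'$ carries commuting left and right actions along its moment maps $J_l,J_r$, and is required to be principal: $J_l$ (equivalently $J_r$) is a surjective projection and $E$ is a principal bundle along it. Replacing each ``surjective projection'' by ``$\cT''$-cover'' via Lemma \ref{lemma:fur-ass} turns this word-for-word into the definition of an H.S.\ morphism in $(\cC, \cT'')$; the compatibility and $2$-morphism conditions involve no topology and are untouched. I do not anticipate a genuine obstacle, since the entire corollary is a bookkeeping consequence of Lemma \ref{lemma:fur-ass}; the one point deserving attention is simply to confirm that \emph{every} occurrence of ``surjective projection'' in the groupoid and bibundle definitions of Section \ref{sect:sgpd} is exactly a spot where the pretopology $\cT''$ would ask for a cover.
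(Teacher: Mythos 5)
Your proposal is correct and matches the paper's approach: the paper derives this corollary directly "by definition" from Lemma \ref{lemma:fur-ass}, and your argument simply spells out the bookkeeping — that every occurrence of ``surjective projection'' in the definitions of groupoid objects and H.S.\ bibundles in $(\cC,\cT,\cT')$ is exactly a $\cT''$-cover, and that for a $1$-groupoid object the only cover conditions live at level one. No genuinely different route is taken.
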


 We list possible $(\cC, \cT, \cT')$ and $\cT''$ with their theory of
presentable stacks in Table \ref{table:2}.  
\begin{table} [h!b!p!]
\caption{Example of categories for a theory of presentable stacks} \label{table:2}
\centering
\begin{tabular}{| p{2.5cm} | p{2cm} | p{3cm} |  p{5cm} |} 
\hline
Theory of & $(\cC, \cT, \cT')$  &  covers of $\cT''$  & presented by \\
\hline \hline
Differentiable stacks & $(\cC_1, \cT_1, \cT'_1)$ & same as $\cT'_1$  & Lie groupoids \\
\hline
Topological stacks & $(\cC_2, \cT_2, \cT'_2)$  & surjective maps  with local 
sections  & topological groupoids\\
\hline
Artin stacks &$(\cC_3, \cT_3, \cT'_3)$ &
same as $\cT'_3$ &  groupoid schemes with extra conditions  \\
\hline
\end{tabular}
\end{table}

{\bf From now on, we restrict ourselves to only the first two situations
  described in Table \ref{table:2}; that is, when we mention  $(\cC, \cT,
  \cT')$ and $\cT''$, it is either  $(\cC_1, \cT_1, \cT'_1)$ and
  $\cT''_1$ or  $(\cC_2, \cT_2, \cT'_2)$ and $\cT''_2$. }

\begin{remark}
The definition of a groupoid object in $(\cC, \cT, \cT')$ is the same as a
groupoid object in $(\cC, \cT')$ even though we have to use $\cT$ to
define epimorphisms. For example,  Lie groupoids are the groupoid objects in $(\cC_1, \cT_1,
\cT'_1)$ and also the groupoid objects in $(\cC_1, \cT_1)$, since both require the
source and target to be surjective
submersions. Topological groupoids are the groupoid objects in
$(\cC_2, \cT_2, \cT'_2)$ requiring source and target to be surjective maps with local
sections. But with the identity section, the conditions for the source
and target naturally hold. Hence
topological groupoids are also the groupoid objects in $(\cC_2,
\cT'_2)$.  However the definition of H.S. morphisms in $(\cC,
\cT, \cT')$ and $(\cC, \cT')$ is not necessarily the same. Hence when
the condition in Lemma \ref{lemma:fur-ass} is satisfied, the definition
of $n$-groupoid in $(\cC, \cT'')$ and $(\cC, \cT')$ is not necessarily the same.  
\end{remark}

\begin{defi} [stacky groupoid]\label{def:sliegpd} A stacky groupoid
  object in $(\cC, \cT, \cT')$ 
over an object  $M\in \cC$ consists of the following data:
\begin{enumerate}
\item a presentable stack $\cG$;
\item (source and target) maps $\bar{\bs},
\bar{\bt}: \cG \to M$ which are surjective projections;
\item (multiplication) a map $m: \cG\times_{\bbs, M, \bbt} \cG \to
\cG$, satisfying the following properties:
\begin{enumerate}
  \item\label{itm:m1} $\bbt \circ m=\bbt\circ \pr_1$, $\bbs \circ m=\bbs\circ
  \pr_2$, where $\pr_i: \cG \times_{\bbs,M, \bbt} \cG \to \cG$ is the
  $i$-th projection map $\cG\times_{\bbs, M, \bbt} \cG \to
\cG$;
  \item\label{itm:m-a} associativity up to a $2$-morphism; i.e., there is a $2$-morphism $a$ between
maps $m\circ (m \times \id)$ and $m\circ(\id\times m)$;
  \item\label{itm:a-higher} the $2$-morphism $a$ satisfies a higher coherence described as follows:
let the $2$-morphisms on the each face of the
     cube be $a_i$\footnote{All the $a_i$'s are generated by $a$, except that $a_4$ is $\id$.} arranged in the following way:
front face (the one with the most $\cG$'s) $a_1$,  back $a_5$; up
$a_4$, down $a_2$; left $a_6$, right $a_3$:
$$
\xymatrix@=5pt{
     & & \cG\mathop\times\limits_{M}\cG\mathop\times\limits_{M}\cG \ar[dr]^{m\times \id} \ar[ddd]^{\id\times m}& \\
  \cG\mathop\times\limits_{M}\cG\mathop\times\limits_{M}\cG\mathop\times\limits_{M}\cG \ar[urr]^{\id\times \id\times m} \ar[dr]^{m\times \id\times \id} \ar[ddd]_{\id\times m\times \id} & & & \cG\mathop\times\limits_{M}\cG\ar[ddd]^{m} \\
     & \cG\mathop\times\limits_{M}\cG\mathop\times\limits_{M}\cG\ar[urr]^{\id\times m} \ar[ddd]^{m\times \id} & & & \\
     & & \cG\mathop\times\limits_{M}\cG \ar[dr]^{m} & \\
  \cG\mathop\times\limits_{M}\cG\mathop\times\limits_{M}\cG \ar[urr]^{\id\times m} \ar[dr]^{m\times \id} & & & \cG .\\
     & \cG\mathop\times\limits_{M}\cG \ar[urr]^{m} & & }
$$
We require \[ (a_6\times \id )\circ(\id\times a_2)\circ(a_1\times
\id)=  (\id\times a_5) \circ (a_4 \times \id) \circ (\id \times a_3).
\]
\end{enumerate}

\item  (identity section) a morphism  $\bar{e}$: $M\to \cG$
such that
\begin{enumerate}
\item\label{itm:e-b}  the identities
\[
m\circ ((\bar{e}\circ \bbt)\times \id)\xRightarrow{b_l}\id, \;\;m\circ (\id\times
(\bar{e}\circ\bbs) )\xRightarrow{b_r}\id,\] hold\footnote{In particular, by
combining with the surjectivity of $\bbs$ and $\bbt$, one has
$\bbs \circ \bar{e}= \id$, $\bbt \circ \bar{e}= \id$ on $M$. In fact
if $x=\bbt(g)$, then $\bar{e}(x)\cdot g \sim g$ and $\bbt \circ m =
\bbt \circ \pr_1$ imply that $\bbt(\bar{e}(x))= \bbt (g) = x$. } up
to $2$-morphisms $b_l$ and $b_r$. Or equivalently there are two
$2$-morphisms
\begin{alignat*}{2}
  m \circ (\id \times \bar{e}) & \overset{b_r}{\to} \pr_1 : \cG \times_{\bbs, M} M \to \cG,&
 \quad
 m \circ (\bar{e} \times \id) & \overset{b_l}{\to} \pr_2 : M \times_{M, \bbt} \cG \to \cG , \\
 g\bar{e} (y) & \to g & \quad    \bar{e}(x) g & \to g
\end{alignat*}
where $y=\bbs(g)$ and $x=\bbt(g)$.
\item \label{itm:b-on-M} the restriction of $b_r$ and $b_l$ on $m\circ (\bar{e} \times
  \bar{e}) \xRightarrow[b_r]{b_l} \bar{e} $ are the same; 
\item\label{itm:br} the composed $2$-morphism below, with $y=\bbs(g_2)$,
\[
g_1 g_2   \xrightarrow{b^{-1}_r}   (g_1 g_2) \bar{e}(y)
\xrightarrow{a}  g_1(g_2 \bar{e}(y)) \xrightarrow{b_r}  g_1g_2
\] is the identity;\footnote{We can also state this without any reference to
objects. We notice that $\pr_1 \circ (m \times \id) $ and $m \circ
(\pr_1\times \pr_2)$ are the same map from $\cG\times_M \cG\times_M
M $ to $ \cG$, but as the diagram indicates,
\begin{equation}\label{diag:br}
\xymatrix{{\cG\times_M \cG\times_M M} \ar[d]^{ \pr_1\times \pr_2}
\ar@<-1ex>[d]_{\id \times (m\circ (\id \times \bar{e}))}
\ar[rr]^{ m \times \id} & & {\cG \times_M M} \ar[d]^{ m\circ(\id\times \bar{e})} \ar@<-1ex>[d]_{ \pr_1}\\
{\cG \times_M \cG} \ar[rr]^m & & {\cG}, }
\end{equation}
they are related also via a sequence of $2$-morphisms:
\begin{equation}\label{eq:br}
 \pr_1 \circ (m \times \id) \xrightarrow{b_r^{-1}\odot \id}
m\circ(\id\times \bar{e}) \circ (m \times \id) \xrightarrow{a}  m
\circ (\id \times (m\circ (\id \times \bar{e})) ) \xrightarrow{\id
\odot (\id \times b_r)}  m \circ (\pr_1\times \pr_2),
\end{equation}
where $\odot$ denotes conjunction of $2$-morphisms, so that for example
$b_r^{-1}:\pr_1 \to m\circ(\id\times \bar{e})$ is a $2$-morphism,
$\id:m\times \id \to m\times \id$ is a $2$-morphism, and the
conjunction $b_r^{-1} \odot \id$ gives a $2$-morphism between the
composed morphisms $ \pr_1 \circ (m \times \id) \xrightarrow{b_r^{-1}\odot \id}
m\circ(\id\times \bar{e}) \circ (m \times \id) $.
We require that the composed $2$-morphisms be $\id$, that is that
\[ (\id \odot ( \id \times  b_r)) \circ a \circ (b_r^{-1} \odot \id) =\id
,\] where $\circ$ is simply the composition of $2$-morphisms.}
\item\label{itm:bl}similarly with $x=\bbt(g_1)$,
\[ g_1 g_2   \xrightarrow{b^{-1}_l}   \bar{e}(x)(g_1 g_2)
 \xrightarrow{a^{-1}} (\bar{e}(x)g_1)g_2  \xrightarrow{b_l}  g_1g_2
 \] is the identity;
\item\label{itm:bl-br} with $x =\bbs(g_1)=\bbt(g_2)$,
\[
g_1 g_2 \xrightarrow{b_l^{-1}} (g_1\bar{e}(x)) g_2 \xrightarrow{a}
g_1(\bar{e}(x) g_2) \xrightarrow{b_r} g_1 g_2, \]is the identity.
\end{enumerate}

\item (inverse) an isomorphism of stacks
$\bar{i}: \cG \to \cG$ such that, the
following identities
\[ m\circ (\bar{i}\times \id \circ \Delta)\Rightarrow \bar{e}\circ\bbs, \;\;
m\circ (\id\times\bar{i}\circ \Delta)\Rightarrow \bar{e}\circ \bbt,\]
hold  up to $2$-morphisms,
where $\Delta$ is the diagonal map: $\cG\to \cG\times\cG$.
\end{enumerate}
\end{defi}

We are specially interested in the differential category.
\begin{defi}When  $(\cC, \cT, \cT')$ is the differential category $(\cC_1,
  T_1, \cT'_1)$, we call a stacky
groupoid object $\cG \rra M$ a \emph {stacky Lie groupoid} (\emph {SLie groupoid} for short). When $\cG$ is
furthermore an \'etale differentiable stack and the identity $e$ is an
immersion of differentiable stacks, we call it a \emph {Weinstein groupoid}
(\emph {W-groupoid} for short).
\end{defi}

\begin{remark} \label{rk:slgpd}
 This definition
of W-groupoid is different from the one in \cite{tz}:  here we add various higher coherences on
$2$-morphisms which make the definition stricter but still
allow the W-groupoids $\cG(A)$ and $\cH(A)$, which are the
integration objects of the Lie algebroid $A$ constructed in
\cite{tz}. Hence we remove 
\\
\noindent {\scriptsize ``Moreover, restricting to the identity
section, the above $2$-morphisms between maps are the $\id$
$2$-morphisms. Namely, for example, the $2$-morphism $\alpha$ induces
the $\id$ $2$-morphism between the following two maps:\[ m\circ ((m
\circ (\bar{e}\times\bar{e}\circ \delta))\times \bar{e} \circ
\delta)=m\circ(\bar{e}\times(m\circ(\bar{e}\times\bar{e}\circ\delta))\circ\delta),
\]where $\delta$ is the diagonal map: $M\to M\times M$.''}
\\
since it is implied by item \ref{itm:b-on-M} and item \ref{itm:br}.

On
the other hand, we do not add higher coherences for the
$2$-morphisms involving the inverse map. This is because we can always find $c'_r$ and
$c'_l$ that satisfy correct higher coherence conditions, possibly
with a modified inverse map.
 See Section
\ref{sec:inverse}.

With some patience, we can check that the list of
coherences on $2$-morphisms given here generates all the possible
coherences on these $2$-morphisms. In fact, item \ref{itm:br} and item
\ref{itm:bl} are redundant (see \cite[Chapter
VII.1]{maclane:cat-math}). But we list them here since it makes more
convenient for us to use later.  We also notice that the cube
condition \eqref{itm:a-higher} is the same as  the
pentagon condition \[ \left[ ((gh)k)l \to (g(hk))l\to g((hk)l) \to
g (h(kl)) \right]= \left[ ((gh)k)l\to (gh)(kl)\to g(h(kl)) \right]
.\]

\end{remark}

\subsection{Good charts} \label{sec:embedding}
Given a stacky groupoid $\cG \rra M$ in $(\cC, \cT, \cT')$, the identity map $\bar{e}:
M\to \cG$ corresponds to an H.S. morphism from
$M\rra M$ to $G_1\rra G_0$ for some presentation of $\cG$. But it
is not clear whether $M$ embeds into $G_0$. It is not even obvious
whether there is a map $M\to G_0$.  In general, one could ask: if
there is a map from an $M \in \cC$ to a presentable stack
$\cG$, when can one find a chart $G_0$ of $\cG$ such that $M\to
\cG$ lifts to $M\to G_0$, namely when is the H.S. morphism $M\rra
M$ to $G_1\rra G_0$ a strict groupoid morphism? If the stack $\cG$
is \'etale, can we find an \'etale chart $G_0$?  If such $G_0$ exists, we call it a
\emph {good chart} or \emph {good \'etale chart} if it is furthermore
\'etale,  and we call $G_1 \rra G_0$ a \emph {good
  groupoid presentation} for the map $M\to \cG$.  

We show the existence of good (\'etale) charts  in the differential category
$(\cC_1, \cT_1, \cT'_1)$ by the following lemmas.  It turns out that the \'etale
case is easier and when $M\to \cG$ is an immersion we can always
achieve an \'etale chart.

\begin{lemma}\label{lemma:embedding}
For an immersion $\bar{e}: M\to \cG$ from a manifold $M$ to an
\'etale stack $\cG$,  there is an \'etale chart $G_0$ of $\cG$
such that $\bar{e}$ lifts to an embedding $e: M\to G_0$.
\end{lemma}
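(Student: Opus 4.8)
The plan is to build the chart in two independent pieces: one piece that realizes $\bar e$ as an embedding of $M$, and a second piece that guarantees the covering (epimorphism) condition. Fix any \'etale chart $t\colon W\to\cG$; this exists because $\cG$ is \'etale, and $W\to\cG$ is already an epimorphism. I claim it then suffices to produce an \'etale morphism $\phi\colon U\to\cG$ from a manifold $U$ together with a closed embedding $e\colon M\hookrightarrow U$ such that $\phi\circ e\simeq\bar e$. Indeed, granting this, set $G_0:=U\sqcup W$ with the morphism $\phi\sqcup t\colon G_0\to\cG$. Each summand is \'etale, and $W\to\cG$ already covers, so $G_0\to\cG$ is an \'etale chart; and $M\xrightarrow{e}U\hookrightarrow G_0$ is the desired embedding lifting $\bar e$.

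To produce $(U,\phi)$ I would use a tubular neighbourhood. Form the fibre product $P:=M\times_{\cG}W$. Since $t$ is \'etale, the projection $\pi\colon P\to M$ is a surjective \'etale map; since $\bar e$ is an immersion and immersions are stable under base change, the other projection $q\colon P\to W$ is an immersion of manifolds. Let $N\to M$ be the normal bundle of $\bar e$ (so that $\pi^{*}N$ is the normal bundle $N_P$ of $q$). Choosing a metric on $W$ near $q(P)$, the normal exponential gives, on a neighbourhood $N_P^{\epsilon}$ of the zero section, a local diffeomorphism $\Phi\colon N_P^{\epsilon}\to W$ with $\Phi|_{P}=q$, so that $t\circ\Phi\colon N_P^{\epsilon}\to\cG$ is \'etale and restricts to $\bar e\circ\pi$ on the zero section. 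It then remains to descend this map along $\pi$ from $N_P^{\epsilon}$ to a neighbourhood $N^{\epsilon}$ of the zero section of $N$, obtaining $\phi\colon U:=N^{\epsilon}\to\cG$ with embedded zero section $e\colon M\hookrightarrow U$ and $\phi|_{M}=\bar e$.

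The heart of the argument, and the step I expect to be the main obstacle, is this descent. The \'etale stack $\cG$ is the quotient of the \'etale groupoid $\Gamma:=W\times_{\cG}W\rightrightarrows W$, and $\bar e$ is presented by a cocycle $c\colon P\times_{M}P\to\Gamma$: two points of $P$ over the same $x\in M$ give two points of $W$ lying over $\bar e(x)$, hence an arrow of $\Gamma$. Each arrow of $\Gamma$ is realized by a germ of diffeomorphism of $W$, these germs carry $q(P)$ to itself, and on normal fibres they induce exactly the descent data identifying $\pi^{*}N=N_P$ with $N$. Thus $\Phi$ descends to $\phi$ precisely when it is equivariant for these germs, i.e.\ when the metric chosen on $W$ near $q(P)$ is invariant under the transition germs in $c(P\times_{M}P)$; one arranges this by averaging the transverse metric over the linear isotropy action along the immersed image, the usual equivariant tubular-neighbourhood construction, which is what makes the \'etale case manageable. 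With such a metric the normal exponential $\Phi$ satisfies the cocycle condition and descends to the \'etale map $\phi\colon N^{\epsilon}\to\cG$, whose zero section $e\colon M\hookrightarrow U$ is a closed embedding with $\phi\circ e\simeq\bar e$. Combining with the first paragraph, $G_0=U\sqcup W$ is the required \'etale chart.
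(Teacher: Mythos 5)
Your overall strategy is the paper's: take the disjoint union of an arbitrary \'etale chart with a tubular neighbourhood $U$ of the immersed $M$, so that the only real work is constructing $U$ together with an \'etale map $U\to\cG$ restricting to $\bar e$ on an embedded copy of $M$. Your packaging of $U$ as an $\epsilon$-neighbourhood of the zero section in the normal bundle $N\to M$ is in fact a little cleaner than the paper's ``germ of manifolds'': it makes the Hausdorffness of $U$ automatic, which the paper has to argue for separately. But the step you yourself single out as the heart of the argument --- producing a metric near $q(P)$ invariant under the transition germs by ``averaging over the linear isotropy action'' --- does not go through as stated, and this is exactly where the paper's bookkeeping lives.

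The difficulty is that the family of germs you must average over is in general neither finite nor locally finite. To descend $t\circ\Phi$ along $N_P^{\epsilon}\to N^{\epsilon}$ you need descent data for the whole \v{C}ech groupoid of $\pi\colon P\to M$, hence invariance of the metric under the germ $c(p_1,p_2)$ for \emph{every} pair $(p_1,p_2)\in P\times_M P$; and the fibres of $\pi\colon M\times_{\cG}W\to M$ are discrete but typically infinite (take $\cG=S^1$, $M$ a point and $W=\R$ the universal cover: the fibre is $\Z$ and the transition germs are the deck translations, over which no averaging converges). Nor is the relevant structure really an isotropy action --- most of the germs relate distinct points of $W$. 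The repair is to first replace $P$ by a locally finite family of local sections of $\pi$: a locally finite cover $\{V_i\}$ of $M$ with chosen lifts $V_i'\subset P$, so that at each point only finitely many transition germs occur, the sum of pulled-back metrics is finite and smooth, and one descends along $\sqcup_i V_i'\to M$ rather than along $P\to M$. This is precisely what the paper's cover $\{V_i\}$, extended bisections $\bar g_{ij}$ and compatible Riemannian metric $g'^i(x)=\sum_{k,\,x\in U_{ki}}g^k(f_{ki}(x))$ accomplish. With that modification (and a word, as in Lemma \ref{lemma:rep-sub}, on why the resulting $\phi\colon N^{\epsilon}\to\cG$ is a representable \'etale map), your argument closes and coincides in substance with the paper's proof.
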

\begin{proof}
Take an arbitrary \'etale chart $G_0$ of $\cG$. The idea is to
find an ``open neighborhood'' $U$ of $M$ in $\cG$ with the
property that $M$ embeds in $U$ and there is an \'etale
representable map $U\to \cG$. Since $G_0\to \cG$ is an \'etale
chart, in particular epimorphic, $G_0\sqcup U \to \cG$ is an
\'etale representable epimorphism\footnote{Note that being \'etale
implies being submersive.}, that is, a new \'etale chart of $\cG$.
Then the lemma is proven since $M\hookrightarrow G_0\sqcup U$ is
an embedding.

Now we look for such a $U$. Since $\bar{e}:M \to \cG$ is an
immersion, the pull-back $M\times_{\cG}G_0 \to G_0$ is an
immersion and $M\times_{\cG}G_0 \to M$ is an \'etale epimorphism.
We cover $M$ by small enough open charts $V_i$ so that each $V_i$
lifts to an isomorphic open chart $V'_i$ on $M\times_{\cG} G_0$.
Then $V'_i \to G_0$ is an immersion, so locally it is an embedding.
Therefore we can divide $V_i$ into even smaller open charts
$V_{i_j}$ such that $V_{i_j}\cong V'_{i_j}\to G_0$ is an
embedding. Hence we might assume that the $V_i$'s form an open
covering of $M$ such that $\bar{e}$ lifts to embeddings $e_i: V_i
\hookrightarrow G_0$. This appears in the language of
Hilsum--Skandalis (H.S.) bibundles as the diagram on the right:
\[
\xymatrix{V'_i\subset & M\times_{\cG} G_0 \ar[r] \ar[d] & G_0 \ar[d] \\
V_i \subset & M \ar[r] & \cG} \quad
\xymatrix{ M\ar[dd]\ar@<-1ex>[dd]&  &G_1\ar[dd]\ar@<-1ex>[dd]\\
& V'_i\subset M\times_{\cG}G_0\ar[dr]^{J_r}\ar[dl]_{J_l}& &\\
M\supset V_i \ar@<-1ex>[ur]_{\sigma_i}&  &G_0.\\
}
\]
Here $e_i= J_r \circ \sigma_i$. Since the action of $G_1$ on the
H.S. bibundle is free and transitive, there exists a unique
groupoid bisection $g_{ij}$ such that $e_i \cdot g_{ij}=e_j$ on
the overlap $V_i\cap V_j$. Since $G_1 \rra G_0$ is \'etale, the bisection $g_{ij}$
extends uniquely to $\bar{g}_{ij}$ on an open set
$\bar{U}_{ij}\subset G_1$. Moreover, there exist open sets $U_i
\supset e_i(V_i)$ of $G_0$ such that
\[e_i(V_i\cap V_j)\subset \bt_G(\bar{U}_{ij}) =: U_{ji} \subset U_i,
\quad e_j(V_i\cap V_j)\subset \bs_G(\bar{U}_{ij}) =: U_{ij} \subset
U_j. \] Since $e_j \cdot g_{ij}^{-1}=e_i$, which implies that $g_{ji}= g_{ij}^{-1}$, these sets are well-defined.   

\centerline{
\epsfig{file=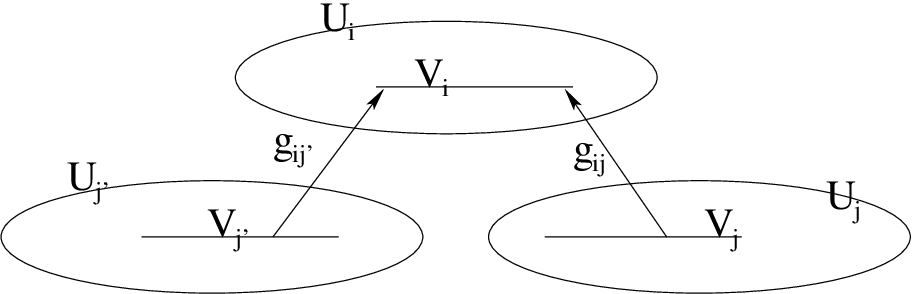,height=2cm}}

Because of uniqueness and because $g_{ij}\cdot g_{jk}=g_{ik}$, we have
$\bar{g}_{ij}\cdot \bar{g}_{jk}=\bar{g}_{ik}$ on the open subsets
$\bar{U}_{ijk}:= \{ (\bar{g}_{ij}, \bar{g}_{jk}, \bar{g}_{ik}):$
$\bar{g}_{ij}\cdot \bar{g}_{jk}$ exists and is in
$\bar{U}_{ik}\}$. Then \[e_i(V_i \cap V_j \cap V_k) \subset
U_{ijk}:= \bt_G (Im(\bar{U}_{ijk} \to \bar{U}_{ij})) \subset
U_{ji}\cap U_{ki} \subset U_i, \] and similarly for $j$ and $k$.
Therefore with these $U$'s we are in the situation of a \emph {germ
of manifolds}  of $M$ defined as below.

A \emph {germ of manifolds at a point} $m$ is a series of manifolds
$U_i$ containing the point $m$ such that each $U_i$ agrees with $U_j$ in a
smaller open set $(m\mathrel{\in)} U_{ji}\subset U_i$ by $x\sim f_{ji}(x)$, with
$f_{ji}: U_i \to U_j$ satisfying the cocycle condition $f_{kj}\circ f_{ji}=f_{ki}$.
A \emph {compatible riemannian metric} of a germ of manifolds
consists of a riemannian metric $g^i$ on each $U_i$ such that two
such riemannian metrics $g^i$ and $g^j$ on $U_i$ and $U_j$ agree
with each other in the sense that $g^i(x)=g^j(f_{ji}(x))$ in a
smaller open set (possibly a subset of $U_{ji}$).  With this, one
can define the exponential map $\exp$ at $m$ using the usual
exponential map of a riemannian manifold, provided the germ is
finite, meaning that there are finitely many manifolds in the germ (which
is true in our case, since $V_i$ intersects finitely many other $V_j$'s).
Then $\exp$ gives a Hausdorff manifold containing $m$.

If a series of locally finite manifolds $U_i$ and morphisms
$f_{ji}$ form a germ of manifolds for every point of a manifold
$M$, we call it \emph {a germ of manifolds of} $M$. Here local
finiteness means that any open set in $M$ is contained in finitely
many $U_i$'s and $M$ has the topology induced by the $U_i$'s, that
is that $M\cap U_i$ is open in $M$.  We can always endow each of these
with a compatible riemannian metric, beginning with any riemannian
metric $g^i$ on $U_i$ and modifying it to the sum
$g'^i(x):=\sum_{k, x\in U_{ki}} g^k(f_{ki}(x))$ (with
$f_{ii}(x)=x$) at each point $x \in U_i$. In this situation, one
can take a tubular neighborhood $U$ of $M$ by the $\exp$ map of the
germ. Then $U$ is a Hausdorff manifold.

Applying the above construction to our situation, we have a
Hausdorff manifold $U \supset M$ with the same dimension as $G_0$.
$U$ is basically glued by small enough open subsets
$\tilde{U}_i=U\cap
U_i$ containing the $V_i$'s along $\tilde{U}_{ij}:=  U\cap U_{ij}$ 
so that the gluing result $U$ is still a Hausdorff manifold.
Therefore $U$ is presented by $\sqcup \tU_{ij} \rra \sqcup \tU_i$,
which maps to $G_1\rra G_0$ via $U_{ij}\cong
\bar{U}_{ij}\hookrightarrow G_1$. So there is a map $\pi: U\to
\cG$. Since the $\tU_{i} \to G_0$ are \'etale maps, by the technical
lemma below, $\pi$ is a representable \'etale map.
\end{proof}

\begin{lemma}\label{lemma:rep-sub}
Given a manifold $X$ and an (\'etale) differentiable stack $\cY$,
a map $f: X\to \cY$ is an (\'etale) representable submersion if
and only if there exists an (\'etale) chart $Y_0$ of $\cY$ such
that the induced local maps $X_i\to Y_0$ are (\'etale)
submersions, where $\{X_i\}$ is an open covering of $X$.
\end{lemma}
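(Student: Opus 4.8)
The plan is to reduce the statement to a single assertion about an ordinary map of manifolds by base-changing along an atlas, and then to compare that map with the given local lifts $X_i\to Y_0$. Fix a presentation $Y_1\rra Y_0$ of $\cY$ (an \'etale Lie groupoid in the \'etale case), so that $Y_0\to\cY$ is a representable surjective submersion, \'etale when $\cY$ is. Write $W:=X\times_\cY Y_0$ with its two projections $p\colon W\to X$ and $q\colon W\to Y_0$. Since a map from a manifold to a differentiable stack is automatically representable (the diagonal is representable), $W$ is a manifold, and $p$, being the base change of the atlas $Y_0\to\cY$ along $f$, is a surjective submersion (\'etale when $Y_0\to\cY$ is). The first step is the observation that, because being a submersion (resp.\ an \'etale map) is local on the target and stable under base change, $f$ is a representable submersion (resp.\ \'etale) if and only if $q\colon W\to Y_0$ is a submersion (resp.\ \'etale): for an arbitrary $U\to\cY$, base-changing $X\times_\cY U\to U$ along the surjective submersion $Y_0\times_\cY U\to U$ recovers the base change of $q$, and one then descends the property along that surjective submersion. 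This turns the lemma into a comparison between $q$ and the lifts $f_i$.

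For the implication ``lifts submersive $\Rightarrow f$ submersive'' I would argue as follows. Given the chart $Y_0$, an open cover $\{X_i\}$ of $X$, and lifts $f_i\colon X_i\to Y_0$ of $f|_{X_i}$ that are submersions (resp.\ \'etale), the fibre product trivialises over each $X_i$: a point of $W$ above $x\in X_i$ is a $2$-isomorphism $f(x)\cong[y]$, which through the lift $f_i$ is the same datum as an arrow $g\in Y_1$ with $\bs(g)=f_i(x)$. Hence $p^{-1}(X_i)\cong X_i\times_{f_i,Y_0,\bs}Y_1$, with $q(x,g)=\bt(g)$, so $q|_{p^{-1}(X_i)}$ factors as $X_i\times_{Y_0}Y_1\xrightarrow{\pr_2}Y_1\xrightarrow{\bt}Y_0$. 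Here $\pr_2$ is the base change of $f_i$ along $\bs$, hence a submersion (resp.\ \'etale), and $\bt$ is a submersion (resp.\ \'etale, in the \'etale case), so the composite is a submersion (resp.\ \'etale). As the $p^{-1}(X_i)$ form an open cover of $W$ and these properties are local on the source, $q$ is a submersion (resp.\ \'etale), and by the reduction $f$ is a representable submersion (resp.\ \'etale map). This is exactly the direction used in the proof of Lemma \ref{lemma:embedding}.

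For the converse I would take any \'etale chart $Y_0$ and form $W$ as above; the reduction gives that $q\colon W\to Y_0$ is \'etale while $p\colon W\to X$ is \'etale and surjective. Choosing an open cover $\{X_i\}$ together with local sections $\sigma_i\colon X_i\to W$ of $p$, the point is that a section of an \'etale surjection is an open embedding, so $f_i:=q\circ\sigma_i$ is the restriction of the \'etale map $q$ to an open subset and is therefore \'etale; and $f_i$ is a genuine lift of $f|_{X_i}$ because $\sigma_i$ is a section of $p$. The hard part — and the reason the equivalence is stated and used in the \'etale setting — is precisely this last step without the \'etale hypothesis: when $Y_0\to\cY$ is only a submersion, $p$ is merely a surjective submersion, so a section $\sigma_i$ is an immersion rather than an open embedding and $q\circ\sigma_i$ need not be submersive (indeed there is a genuine dimension obstruction, since $\dim X$ can be strictly smaller than $\dim Y_0$ when the isotropy is positive-dimensional). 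It is the \'etale assumption, making $p$ \'etale and its local sections open embeddings, that removes this difficulty and yields the converse cleanly; in the plain submersion case only the implication of the preceding paragraph is needed.
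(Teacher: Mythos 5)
Your main direction is correct and is, up to reorganisation, the paper's own argument. The paper proves only that implication: it writes $X_i\times_\cY V=X_i\times_{Y_0}(Y_0\times_\cY V)$ for an arbitrary $V\to\cY$, observes that this is a composite of a base change of the submersion $X_i\to Y_0$ with the chart projection, and glues over the cover $\{X_i\}$; your version specialises $V$ to $Y_0$, identifies $p^{-1}(X_i)\cong X_i\times_{f_i,Y_0,\bs}Y_1$, and then descends along the atlas. Both are fine, and that is the only direction the paper uses (in Lemmas \ref{lemma:embedding} and \ref{lemma:embedding-non-etale}).

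The last paragraph, however, contains a genuine error: the claimed ``dimension obstruction'' to the converse in the non-\'etale case. What your dimension count actually obstructs is producing a submersive lift of $f$ to a \emph{fixed} chart $Y_0$ (for instance $*\to BS^1$ admits no submersive lift to the chart $\mathbb{R}^2\to BS^1$ classifying the trivial bundle). But the lemma only asserts the \emph{existence} of some chart with this property, and that is automatic in both cases: if $f\colon X\to\cY$ is a representable submersion and $G_0$ is any chart, then $Y_0:=X\sqcup G_0\to\cY$ (given by $f$ on the first summand) is again a chart --- for every $V\to\cY$ the map $(X\times_\cY V)\sqcup(G_0\times_\cY V)\to V$ is a submersion on the first piece and a surjective submersion and epimorphism on the second --- and the open inclusion $X\hookrightarrow Y_0$ is an \'etale, hence submersive, lift of $f$ over the single chart $X_1=X$. (This ``enlarge the chart by a disjoint union'' device is exactly what the paper itself does in the proof of Lemma \ref{lemma:embedding} with $G_0\sqcup U$.) The same construction with $G_0$ an \'etale chart and $f$ \'etale handles the \'etale converse, so your section-of-an-\'etale-surjection argument, though correct, is not what makes the converse work; the freedom to choose the chart is, and with it the equivalence holds as stated in the plain submersion case as well.
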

\begin{proof}
For any $V\to \cY$, $X_i\times_{\cY} V = X_i \times_{Y_0} Y_0
\times_{\cY} V $ is representable and $X_i\times_{\cY} V \to V$ is
an (\'etale) submersion since $X_i\to Y_0$ and $Y_0\to \cY$ are
representable (\'etale) submersions. Since the $X_i$'s glue together to
$X$, the $X_i\times_{\cY} V$ with the inherited gluing maps glue to a
manifold $X\times_{\cY} V$. Since being an (\'etale) submersion is
a local property, $X\times_{\cY} V \to V$ is an (\'etale)
submersion.
\[
\xymatrix{ X_i \times_{\cY} V \ar[drr] \ar[dd] \\
& \curvearrowright & X_k \times_{\cY} V \ar[dll] \\
X_j \times_{\cY} V } \quad \xymatrix{&\\ \Leftarrow \\ &} \quad \xymatrix{  X_i  \ar[drr] \ar[dd] \\
& \curvearrowright & X_k  \ar[dll] \\
X_j  }
\]
\end{proof}

\begin{remark}\label{rk:loc} If $\bar{e}$ is the identity map of a W-groupoid $\cG \rra M$, then an
open neighborhood of $M$ in $U$ has an induced local groupoid
structure from the stacky groupoid structure \cite[Section~5]{tz}.
\end{remark}

We further prove the same lemma in the non-\'etale case.
\begin{lemma}\label{lemma:embedding-non-etale}
For a morphism $\bar{e}: M\to \cG$ from a manifold $M$ to a
differentiable stack $\cG$,  there is a chart $G_0$ of $\cG$ such
that $\bar{e}$ lifts to an embedding $e: M\to G_0$.
\end{lemma}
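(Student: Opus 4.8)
The plan is to follow the proof of Lemma \ref{lemma:embedding} almost verbatim, isolating the two places where we used that $\cG$ is \'etale and that $\bar{e}$ is an immersion, and replacing them by softer arguments. As before, the goal is to produce a chart $U$ of $\cG$ into which $M$ embeds, together with a representable submersion $\pi\colon U\to \cG$; then $G_0\sqcup U\to \cG$ is the desired chart, since $G_0\to\cG$ already makes it an epimorphism and $M\hookrightarrow U\hookrightarrow G_0\sqcup U$. First I would fix an arbitrary chart $G_0\to\cG$ presented by a Lie groupoid $G_1\rra G_0$ with source and target $\bs_G,\bt_G$. Because $\cG$ is only a differentiable stack, $G_0\to\cG$ is a surjective submersion rather than \'etale, so in the H.S.\ bibundle picture the pull-back $P:=M\times_\cG G_0$ has $J_l\colon P\to M$ a surjective submersion (not a local diffeomorphism). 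A surjective submersion still admits local sections, so over a locally finite open cover $\{V_i\}$ of $M$ I obtain sections $\sigma_i\colon V_i\to P$ and hence local lifts $e_i:=J_r\circ\sigma_i\colon V_i\to G_0$ of $\bar{e}|_{V_i}$, exactly as in Lemma \ref{lemma:embedding}.

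The new difficulty is that, since $\bar{e}$ is no longer an immersion, the $e_i$ need not be immersions, so subdividing the $V_i$ does not make them embeddings. To fix this I would enlarge the chart: replace $G_0$ by $G_0\times\R^N$, which is still a chart of $\cG$, since $G_0\times\R^N\to G_0$ is a surjective submersion and hence $G_0\times\R^N\to\cG$ is still a representable submersion and an epimorphism. For $N$ large I choose embeddings $\psi_i\colon V_i\hookrightarrow\R^N$ (Whitney), so that $\tilde{e}_i:=(e_i,\psi_i)\colon V_i\to G_0\times\R^N$ is automatically an embedding; composing with $G_0\times\R^N\to G_0\to\cG$ still recovers $\bar{e}|_{V_i}$, so the $\tilde{e}_i$ are embedded local lifts. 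The groupoid presenting this enlarged chart is $G_1\times\R^N\times\R^N\rra G_0\times\R^N$, and on $V_i\cap V_j$ the lifts $\tilde{e}_i,\tilde{e}_j$ are joined by a unique arrow $g_{ij}$ of this groupoid (uniqueness because the bibundle action is free and transitive on the fibres of $J_l$), satisfying the cocycle identity $g_{ij}g_{jk}=g_{ik}$, just as before.

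With embedded local lifts in hand I would run the germ-of-manifolds and exponential-map construction of Lemma \ref{lemma:embedding} unchanged, provided I can still manufacture the local diffeomorphisms $f_{ji}$ between neighborhoods $U_i\supset\tilde{e}_i(V_i)$ and $U_j\supset\tilde{e}_j(V_j)$ from the arrows $g_{ij}$. In the \'etale case these came for free because $\bs_G$ was a local diffeomorphism; here I would instead extend each $g_{ij}$ to a local \emph{bisection} $\beta_{ij}$ of the Lie groupoid and set $f_{ji}:=\bt_G\circ\beta_{ij}$. Such local bisections through a prescribed arrow always exist for a Lie groupoid: at the arrow one needs a $\dim G_0$-dimensional subspace of $TG_1$ that is a common complement to $\ker d\bs_G$ and $\ker d\bt_G$, and a generic subspace is simultaneously transverse to both kernels, so $\bt_G\circ\beta_{ij}$ is a local diffeomorphism onto its image. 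The cocycle condition on the $g_{ij}$ then yields $f_{kj}\circ f_{ji}=f_{ki}$ after shrinking, so the $U_i$ form a germ of manifolds of $M$; equipping it with a compatible riemannian metric and taking the $\exp$-tubular neighborhood produces a Hausdorff manifold $U\supset M$ with local submersions $\tilde{U}_i\to G_0\times\R^N$. By Lemma \ref{lemma:rep-sub} the induced map $\pi\colon U\to\cG$ is a representable submersion, and $M\hookrightarrow U$ is an embedding, which completes the construction.

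The main obstacle is the globalization, not the local lifting: the section obstruction of $J_l\colon P\to M$ genuinely prevents a single global lift $M\to G_0$, so the whole force of the argument is in gluing the embedded local lifts into one manifold chart. Concretely, the two points needing care are (i) that enlarging the chart by $\R^N$ is harmless yet buys enough room to turn the non-immersive local lifts into embeddings, and (ii) the existence of local bisections of the non-\'etale groupoid, which replaces the free-and-transitive/\'etale extension step of Lemma \ref{lemma:embedding}; once these are in place the riemannian germ-of-manifolds machinery carries over verbatim.
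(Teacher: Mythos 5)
Your first fix --- enlarging the chart to $G_0\times\R^N$ and using Whitney embeddings to force the local lifts to be embeddings --- is a legitimate variant of what the paper does (the paper instead factors $M\times_\cG G_0\to G_0$ through the graph embedding into $H_0=(M\times_\cG G_0)\times G_0$ and passes to the pull-back groupoid $H_1\rra H_0$); either way one reduces to embedded local lifts $e_i\colon V_i\hookrightarrow G_0$ with unique connecting arrows $g_{ij}$ over $V_i\cap V_j$ satisfying the cocycle identity there.

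The gap is in the second step. You extend each $g_{ij}$ to an arbitrary local bisection $\beta_{ij}$ and assert that the cocycle condition $f_{kj}\circ f_{ji}=f_{ki}$ for $f_{ji}=\bt_G\circ\beta_{ij}$ ``then'' holds after shrinking. It does not: because $G_1\rra G_0$ is not \'etale, the extension of $g_{ij}$ off the submanifold $e_i(V_i\cap V_j)$ is highly non-unique, and independently chosen extensions make $\beta_{ij}\cdot\beta_{jk}$ and $\beta_{ik}$ agree only on $e_i(V_i\cap V_j\cap V_k)$, which has empty interior in $G_0$ whenever $\dim M<\dim G_0$ --- the generic situation here, and certainly the case after you fatten the chart by $\R^N$. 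Shrinking open sets around a positive-codimension submanifold never forces two smooth maps that agree only on that submanifold to coincide. This failure of automatic coherence is exactly what the paper isolates as the ``second difference'' and what occupies most of its proof: it does not extend the $g_{ij}$ independently, but constructs the extensions coherently --- fixing $V_i$, choosing $\bar{g}_{ij}$ for all $j$ meeting $V_i$, and then defining $\bar{g}_{jj'}$ as an extension of $\bar{g}_{ij}^{-1}\bar{g}_{ij'}$ on the open triple-overlap sets, via an explicit lemma on extending a smooth function from the union of an open set and a closed submanifold, together with the observation that being a bisection is an open condition; the cocycle identity then holds on smaller open sets by construction, and a connectedness/induction argument propagates the choices over all of $M$. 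Your argument needs this (or some equivalent coherent-choice mechanism) before the germ-of-manifolds and exponential-map machinery can be invoked; as written, the germ is not well defined because the $f_{ji}$ need not satisfy $f_{kj}\circ f_{ji}=f_{ki}$ on any open set.
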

\begin{proof} We follow the proof of the \'etale case, but replace ``\'etale map'' with ``submersion''.
We need a $U$ with a representable submersion  to $\cG$ and an
embedding of $M$ into $U$. There are two differences: first, $V_i$
embeds in $V'_i$ instead of being isomorphic to it, and we do not
have an embedding $V'_i \hookrightarrow G_0$; second, since
$G_1\rra G_0$ is not \'etale, the bisection $g_{ij}$ does not
extend uniquely to some $\bar{g}_{ij}$ and we cannot have the
cocycle condition immediately.

The first difference is easy to compensate for: given any morphism $f:
N_1 \to N_2 $, we can always view it as a composition of an
embedding and a submersion as $N_1 \overset{\id\times
f}{\hookrightarrow} N_1\times N_2
\overset{\pr_2}{\twoheadrightarrow} N_2$. In our case, we have the
decomposition $M\times_{\cG}G_0 \hookrightarrow H_0
\twoheadrightarrow G_0$; then we use the pull-back groupoid $H_1:=
G_1\times_{G_0\times G_0} H_0\times H_0$ over $H_0$ to replace
$G$. Thus we obtain an embedding $V'_i \to H_0$ and so an
embedding $V_i \to H_0$. Then since $H_1\rra H_0$ is Morita
equivalent to $G_1\rra G_0$, we just have to replace $G$ by $H$ or
call $H$ our new $G$. It was not possible to do so in the \'etale
case since $H_0$ might not be an \'etale chart of $\cG$.

For the second difference, first of all we could assume $M$ to be
connected to construct such a $U$. Otherwise we take the disjoint
union of such $U$'s for each connected component of $M$.

Then take any $V_i$ and consider all the charts $V_j$
intersecting $V_i$. We choose $\bar{g}_{ij}$ extending $g_{ij}$ on
an open set $\bar{U}_{ij}$. As before we define the open sets
$U_i$, the $U_j$'s and the $U_{ij}$'s. Then for $V_j$ and $V_{j'}$ both
intersecting $V_i$, we choose $\bar{g}_{jj'}$ to be the one
extending (see below) $\bar{g}_{ij}^{-1}\bar{g}_{ij'}$ with
$\bs_G(\bar{g}_{ij}^{-1}\bar{g}_{ij'})$ in the triple intersection
$\bg_{ij'}^{-1}\cdot(\bg_{ij}\cdot U_j ) \cap U_{j'}$, where
multiplication applies when it can. Since the $\bar{g}$'s are local
bisections, $\bar{g} \cdot -$ is an isomorphism. Identifying via
these isomorphisms, we view and denote the above intersection as
$U_{j'ij}$ for simplicity.

Now we clarify in which sense and why the extension always exists.
Let us assume $\dim M =m$, $\dim G_i =n_i$. 
Here we identify each $V_j$ with its embedded image in $G_0$ and require every $V_j$ to be relatively closed in $U_j$.
Then since we are dealing with local charts, we might assume that
both $\bt_G$ and $\bs_G$ of $G_1\rra G_0$ are just projections from
$\R^{n_1}$ to $\R^{n_0}$. A section of $\bs_G$ is a vector valued
function $\R^{n_0}\to \R^{n_1}/\R^{n_0}$, and its being  a
bisection, namely also a section of $\bt_G$,  is an open condition.
That is, we can always perturb a section to get a bisection. Let $U_{j'ij}:=
\bs_G(Im (\bar{U}_{jij'} \to \bar{U}_{ij'})) \subset U_{j'}$ where
$\bar{U}_{jij'}$ and $\bar{U}_{ij'}$ are defined as before in Lemma
\ref{lemma:embedding}.  If we can
extend $\bg_{ij}^{-1}\bg_{ij'}$ and $g_{jj'}$ from $U_{j'ij} \cup
e_{j'}(V_j\cap V_{j'})$ to a bisection $\bg_{jj'}$ such that $\{
\bs_G(\bar{g}_{jj'})\}$ is an open set in $U_{j'}$, then we obtain a
bisection $\bar{g}_{jj'}$ from $U_{jj'}:=
\bg_{jj'}^{-1}(\{\bt_G(\bg_{jj'})\}\cap U_{j})$ to $U_{j'j} :=
\{\bt_G(\bg_{jj'})\}\cap U_{j}$. It is easy to see that
the $U_{jj'}\cong U_{j'j}$ are open in $G_0$ since
$\{\bt_G(\bg_{jj'})\}\cong \{\bs_G(\bg_{jj'})\}$.

Therefore we are done as long as we can extend a smooth function
$f$ from the union of an open submanifold $O$ with a closed
submanifold $V$ of an open set $B\subset \R^{n_0}$ to the whole
$B$. Since $V$ is closed, using its tubular neighborhood and
partition of unity, we can first extend $f$ from $V$ to $B$ as
$\tilde{f}$. Then $f_1= f-\tilde{f}|_{O\cup V}$ is 0 on $V$. We
shrink the open set $O$ a little bit to $O_i$ such that $V \cap O
\subset O_2\subset O_1 \subset O$. Then we always have a smooth
function $p$ on $B$ with $p|_{\bar{O}_2 } =1$ and $p|_{B-O_1} =0$.
Then the extension function $\tilde{f}_1$ is defined by
\[\tilde{f}_1(x)=
  \begin{cases}
    f_1(x) \cdot p(x) & \text{$x\in O$}, \\
    0 & \text{otherwise}.
  \end{cases}
\] It is easy to see that $\tilde{f}_1$ is smooth, and it agrees with $f_1$ on $O_2$ and $V$ because
$V-O_2=V-O_1 \subset B-O_1$ and $p|_{V-O_2}=0$. Hence $\tilde{f} +
\tilde{f}_1$ extends $f|_{O_2\cup V}$. Now we extend the
$\bar{g}_{ij}^{-1}\bar{g}_{ij'}$'s to $\bar{g}_{jj'}$'s; then the
$\bar{g}$'s satisfy the cocycle condition on smaller open sets of
the triple intersections $U_{j'ij}$ by construction.

Then we view $V_i \cup(\bigcup_{j:V_i\cap V_j \neq \emptyset} V_j)$
as one chart. Notice that a connected manifold is path connected.
Also notice that we didn't use any topological property of $V_i$
or $U_i$. This construction will eventually extend to the whole
manifold $M$ and obtain the desired $\bg_{ij}$'s. Therefore we are
again in the situation of a germ of manifolds and we can apply the
proof of Lemma \ref{lemma:embedding} to get the result.
\end{proof}

\subsection{The inverse map}\label{sec:inverse}
In this section, we prove that the axioms involving the inverse
map in the definition of stacky groupoid can be described by the multiplication and the identity.

Let $\cG\rra M$ be a stacky groupoid object in $(\cC, \cT, \cT')$, and $G:=G_1
\underset{\bt_G}{\overset{\bs_G}{ \rra }}G_0$ a good groupoid
presentation of $\cG$ as described in Section
\ref{sec:embedding}. 
So there is a map $e: M\to G_0$ presenting $\bar{e}$.  We look at
the diagram
\[ \cG \times_{M} \cG \overset{m}{\lra}\cG \overset{\be}{\lla}M \]
and its corresponding groupoid picture,
\begin{equation}\label{eq:inv-construct}
\xymatrix{ G_1\times_{\bs_G \bs, M, \bt_G \bt} G_1 \ar[dd]\ar@<-1ex>[dd]&  &G_1\ar[dd]\ar@<-1ex>[dd]& &M\ar[dd]\ar@<-1ex>[dd]\\
& E_m\ar[dr]^{J_r}\ar[dl]_{J_l}& & {E_{\be}} \ar[dr]^{J_r}\ar[dl]_{J_l}& &\\
G_0\times_{\bs, M, \bt} G_0&  &G_0& &M\\
}
\end{equation}
where $E_m$ and $E_{\be} =G_1\times_{\bt_G, G_0, e}M$ 
are bibundles presenting the multiplication $m$ and identity $\be$
of $\cG$ respectively. We can form a left $G\times_{\bs, M, \bt} G$ module
$E_m \times_{G_0} E_{\be} /G$.  Examining the $G$ action on
$E_{\be}$, we see that the geometric quotient,
\begin{equation}\label{eq:bd-inv}
 (E_m \times_{G_0} E_{\be})/G = E_m\times_{J_r, G_0, e} M, \quad
\begin{xy}
*\xybox{(0,0);<3mm,0mm>:<0mm,3mm>::,0
  ,{\xylattice{-1}{0}{1}{0}}}="S",
  {(-10,-1)*{\bullet}}, {(-10, -3)*{_1}},
     {(12,2)*{\bullet}}, {(14, 2)*{^{0}}},
     {(10, -1)*{\bullet}}, {(10, -3)*{_{2}}},
     {(-10, -1) \ar@{->} (12,2)}, 
     { (10, -1) \ar@{->} (-10,-1)}, { (10, -1) \ar@{->} (12,2)} , 
\end{xy}
\end{equation}
is representable in $\cC$ by Lemma \ref{lemma:kan22}; and we see that
the natural map $G_0\times_{\bs, M, \bs} G_0 \xrightarrow{\pr_2} G_0 $ is  a
projection. This space should be pictured as the diagram above from
the viewpoint of $2$-groupoids.   Moreover there is a left
$G_1\times_{\bs_G \bs, M , \bt_G \bt} G_1$ action (which
might not be free and proper). Therefore, we can view it as a left
$G$ module with the left action of the first copy of $G$ and a
right $G^{\op}$ module with the left action of the second copy of
$G$. Here $G^{\op}$ is $G$ with the opposite groupoid structure.

\begin{lemma} \label{lemma:kan22}
The morphism $(\pr_2\circ J_l) \times J_r : E_m \to G_0\times_{\bs, M ,
\bs} G_0$
is a projection. 
\end{lemma}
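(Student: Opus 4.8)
The plan is to identify $\Phi:=(\pr_2\circ J_l)\times J_r$ with the right moment map of the bibundle presenting a suitable stack isomorphism, and then to invoke that isomorphisms are projections. First I would check that $\Phi$ really does land in $G_0\times_{\bs,M,\bs}G_0$. Since $E_m$ presents $m$, property \ref{itm:m1} ($\bbs\circ m=\bbs\circ\pr_2$) of Definition \ref{def:sliegpd} forces the equality $\bs\circ J_r=\bs\circ\pr_2\circ J_l$ of maps $E_m\to M$, which is exactly what is needed for $\Phi$ to factor through the fibre product. Because $\bs\colon G_0\to M$ presents the surjective projection $\bbs$ it is itself a projection, so $G_0\times_{\bs,M,\bs}G_0$ and $G_0\times_{\bs,M,\bt}G_0$ exist in $\cC$ and are charts of the presentable stacks $\cG\times_{\bbs,M,\bbs}\cG$ and $\cG\times_{\bbs,M,\bbt}\cG$ respectively.

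Next I would consider the morphism of stacks
\[ \Psi\colon \cG\times_{\bbs,M,\bbt}\cG\lra \cG\times_{\bbs,M,\bbs}\cG,\qquad (g_1,g_2)\longmapsto (g_2,\,m(g_1,g_2)), \]
well defined by \ref{itm:m1}, and show it is an isomorphism. Its inverse sends $(g_2,h)$ to $(m(h,\bar{i}(g_2)),g_2)$: here $\bbs(h)=\bbs(g_2)=\bbt(\bar{i}(g_2))$ makes the product defined, and both round trips are the identity up to coherent $2$-morphisms assembled from the associator $a$ of \ref{itm:m-a}, the right unit $b_r$ of \ref{itm:e-b}, and the inverse axiom $m(\bar{i}(g),g)\Rightarrow\be\circ\bbs$ of Definition \ref{def:sliegpd} (for instance $m(m(h,\bar{i}(g_2)),g_2)\cong m(h,m(\bar{i}(g_2),g_2))\cong m(h,\be(\bbs h))\cong h$, using $\bbs g_2=\bbs h$). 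Hence $\Psi$ is an isomorphism of stacks, and in particular a projection.

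Finally I would identify $\Phi$ as the relevant chart-level map. Writing $X_0=G_0\times_{\bs,M,\bt}G_0$ and $Y_0=G_0\times_{\bs,M,\bs}G_0$, the bibundle presenting $\Psi$ has total space the fibre product of charts $X_0\times_{\cG\times_{\bbs,M,\bbs}\cG}Y_0$, with right moment map the projection to $Y_0$. Since the first component $\pr_2$ of $\Psi$ is a strict groupoid morphism and the second is presented by $E_m$, a direct computation identifies this total space with $E_m$ and its right moment map with $(\pr_2\circ J_l)\times J_r=\Phi$. By the definition of a projection of stacks, $\Psi$ being a projection says precisely that $X_0\times_{\cG\times_{\bbs,M,\bbs}\cG}Y_0\to Y_0$, namely $\Phi$, is a projection. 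The same argument applies verbatim in the topological category $(\cC_2,\cT_2,\cT'_2)$.

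The step I expect to be the main obstacle is this last identification: pinning down that the bibundle presenting $\Psi=(\pr_2,m)$ has underlying space exactly $E_m$ (rather than some larger fibre product) with right moment map exactly $\Phi$, together with the verification that the proposed inverse of $\Psi$ is genuinely two-sided up to the prescribed coherences. Everything else is a formal consequence of the dictionary between presentable stacks and groupoids already recalled in this section.
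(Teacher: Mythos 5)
Your overall strategy is the paper's: you introduce the stack morphism $(g_1,g_2)\mapsto (g_1g_2,g_2)$ (you write it as $(g_2,\,m(g_1,g_2))$, which is the same up to the order of factors), invert it using $\bar{i}$, $a$ and $b_r$ exactly as the paper does with its maps $f_1$ and $f_2$, and then try to read off the lemma from the chart-level description of this isomorphism. The first two steps are fine. The problem is the final identification, which you yourself flag as the delicate point: it is not correct that the bibundle presenting $\Psi$ with charts $X_0=G_0\times_{\bs,M,\bt}G_0$ and $Y_0=G_0\times_{\bs,M,\bs}G_0$ has total space $E_m$. Computing $X_0\times_{\cG\times_{\bbs,M,\bbs}\cG}Y_0$ gives $E_m\times_{\pr_2\circ J_l,\,G_0,\,\bt_G}G_1$ (one copy of $G_1$ for the strict component $\pr_2$ of $\Psi$, and $E_m$ for the component $m$), and its right moment map is $J_r\times \bs_G$, not $(\pr_2\circ J_l)\times J_r$. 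This is forced for structural reasons: the presenting bibundle must be a principal $G_1\times_M G_1$-bundle over $Y_0$, while $E_m$ is only a principal $G_1$-bundle over $G_0\times_M G_0$ via $J_l$, so the two spaces cannot coincide. As written, your argument therefore proves that $J_r\times\bs_G: E_m\times_{G_0}G_1\to G_0\times_{\bs,M,\bs}G_0$ is a (surjective) projection, which is not yet the statement of the lemma.

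What is missing is a descent step, and it is exactly the step the paper's proof supplies. The map $J_r\times\bs_G$ is invariant under the left action of the second copy of $G_1$ on $E_m\times_{G_0}G_1$; that action is principal, its quotient is canonically $E_m$, and the induced map on the quotient is $(\pr_2\circ J_l)\times J_r$. One then invokes the fact that a $G$-invariant projection descends to a projection on the quotient of a principal action (equivalently: if $f\circ q$ is a projection and $q$ is a surjective projection, then $f$ is a projection, which holds in both the differential and topological settings considered). With that paragraph added, your argument closes and coincides with the paper's proof; without it, the identification ``total space $=E_m$, right moment map $=\Phi$'' is false and the proof does not go through.
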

\begin{proof}
Let $f_1: \cG \times_{\bbs,M, \bbt} \cG \to \cG \times_{\bbs, M, \bbs} \cG$ be given by $(g_1, g_2)\mapsto (g_1 \cdot g_2, g_2)$, i.e.\ $f_1 = m\times \pr_2$; let $f_2: \cG \times_M \cG \to \cG \times_M \cG$ be given by $(g_1, g_2)\mapsto (g_1 \cdot g_2^{-1}, g_2)$. Since we have
\[ (g_1, g_2) \overset{f_1}{\mapsto} (g_1 g_2, g_2)\overset{f_2}\mapsto ((g_1 g_2)g_2^{-1}, g_2))\sim (g_1, g_2), \]
and
\[ (g_1, g_2) \overset{f_2}{\mapsto}(g_1 g_2^{-1}, g_2)\overset{f_1}{\mapsto}((g_1 g_2^{-1})g_2 , g_2))\sim (g_1, g_2), \]
$f_1 \circ f_2$ and $f_2\circ f_1$ are isomorphic to
$\id$ via $2$-morphisms. Therefore $f_1$ is an isomorphism of stacks.
Therefore $E_m \times_{\pr_2 \circ J_l, G_0, \bt_G} G_1$, presenting
$f_1$, is a Morita bibundle from the Lie groupoid $G_1 \times_M
G_1\rightrightarrows G_0 \times_M G_0$ to $G_1\times_M G_1
\rightrightarrows G_0\times_M G_0$. Hence the two moment maps
$J_l$ (of $E_m$) and $J_r \times \bs_G$ are surjective
projections. Moreover $J_r \times \bs_G$ is invariant under  the left
groupoid action of  $G_1 \times_M G_1$, so in particular under  the
action of  the
second copy. Notice that a $G$ invariant projection $X\to Y$ descends to a
projection $X/G \to Y$ if the $G$ action is principal, in both of our two
cases. As a result, the morphism  $(\pr_2\circ J_l) \times J_r : E_m \to
G_0\times_{M } G_0$ is a projection. 

Moreover since the left
groupoid action of  $G_1 \times_M G_1$ is principal on the
bibundle $E_m \times_{\pr_2 \circ J_l, G_0, \bt_G} G_1$, the induced
action of the first copy of $G_1$ on the quotient  $E_m $ is principal.
\end{proof}

\begin{lemma} \label{proof-morita} The bibundle \eqref{eq:bd-inv} is a Morita
 equivalence
 from $G$ to $G^{\op}$ with moment maps $\pr_1\circ J_l$ and $\pr_2
 \circ  J_l$.
\end{lemma}
\begin{proof}
The left action of $G$ is principal followed by the principal action
of $G$ on $E_m$ proven in Lemma
\ref{lemma:kan22}, and the proof of the principality of the  $G^{\op}$ action is similar
(one considers $G_1\times_{G_0}E_m$).
\end{proof}
\begin{remark}
Another fibre product $E_m \times_{\pr_2\circ J_l, G_0, e}M $ is
isomorphic to $ G_1$ trivially via $b_r$. But the morphisms we use
to construct the fibre product are different.
\end{remark}

Notice that using the inverse operation, a $G^{\op}$ module is also
a $G$ module. In other words, the above lemma says that
$E_m\times_{J_r, G_0, e} M$ is a Morita bibundle between $G$ and
$G$ where the right $G$ action is via the left action of the
second copy of $G\times_M G$ composed with the inverse. With
this viewpoint, we have a stronger statement:

\begin{lemma} \label{isom}
As Morita bibundles from $G$ to $G$, $E_m\times_{J_r, G_0, e} M$
and $E_i$ are isomorphic.
\end{lemma}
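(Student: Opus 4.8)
The plan is to show that the Morita bibundle $E_m\times_{J_r,G_0,e}M$ presents the stack isomorphism $\bar i$, so that it must be isomorphic to $E_i$ by the equivalence between the $2$-category of differentiable stacks and that of Lie groupoids with H.S. bibundles. By Lemma \ref{proof-morita} and the remark following it, $E_m\times_{J_r,G_0,e}M$ is a $G$-$G$ Morita bibundle in which the right $G$-action is the left action of the second copy of $G\times_M G$ composed with the inverse; let $\phi\colon\cG\to\cG$ denote the stack isomorphism it presents. Geometrically a point of this bibundle is a multiplication triangle $\xi\in E_m$ whose product edge $J_r(\xi)$ lies over $e(M)$, i.e.\ a pair $(g_1,g_2)$ equipped with a $2$-morphism $g_1 g_2\Rightarrow\bar e$; thus $\phi$ sends $g_1$ to the $g_2$ solving $g_1 g_2=\bar e$, which is exactly what $\bar i$ does.

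To make this precise I would construct an explicit comparison map of bibundles. The defining $2$-morphism of the inverse, $m\circ(\id\times\bar i\circ\Delta)\Rightarrow\bar e\circ\bbt$, provides for each $g$ the pair $(g,\bar i(g))$ together with a $2$-morphism $g\cdot\bar i(g)\Rightarrow\bar e\circ\bbt(g)$; this is precisely the data of a point of $E_m$ lying over $e(M)$ under $J_r$. Working with the fixed good presentation $G$ and the presenting bibundles $E_m$ and $E_i$, this yields a morphism of bibundles $\Psi\colon E_i\to E_m\times_{J_r,G_0,e}M$. I would then check that $\Psi$ intertwines the left $G$-action (via $\pr_1\circ J_l$, which simply carries the leg $g_1$) and the right $G$-action.

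The conclusion is then immediate: both $E_i$ and $E_m\times_{J_r,G_0,e}M$ are principal (Morita) $G$-$G$-bibundles, and any equivariant morphism between two principal bibundles over the same pair of groupoids is automatically an isomorphism, since fibrewise it is an equivariant map of principal homogeneous spaces. Hence $\Psi$ is the desired isomorphism of Morita bibundles, and in particular $\phi\cong\bar i$.

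The main obstacle I expect is the verification of right $G$-equivariance of $\Psi$. Because the right action on $E_m\times_{J_r,G_0,e}M$ is the action of the second leg composed with the inverse, intertwining it amounts to controlling how the $2$-morphism $g\,\bar i(g)\Rightarrow\bar e$ transforms when the second leg is changed, which forces one to invoke the coherence axioms \ref{itm:br}, \ref{itm:bl} and \ref{itm:bl-br} on $a$, $b_l$ and $b_r$ in Definition \ref{def:sliegpd} together with the inverse axiom. This bookkeeping is exactly the point at which the inverse/opposite conventions must be matched, so that ``solve $g_1 g_2=\bar e$'' reproduces $\bar i$ on the nose rather than a twisted variant; all remaining steps are formal consequences of the principality established in Lemmas \ref{lemma:kan22} and \ref{proof-morita}.
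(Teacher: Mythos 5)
Your argument is sound and reaches the lemma by a genuinely different route than the paper. You take the inverse axiom's $2$-morphism $c_r\colon m\circ(\id\times\bar i\circ\Delta)\Rightarrow\bar e\circ\bbt$, read it as the isomorphism of composed bibundles $\big((G_1\times_{G_0}E_i)\times_{G_0\times_M G_0}E_m\big)/G_1\times_M G_1\cong G_0\times_{e\circ\bt,G_0,\bt_G}G_1$, and extract from it a direct equivariant comparison $\Psi\colon E_i\to E_m\times_{J_r,G_0,e}M$, which is then an isomorphism by rigidity of principal bibundles. The paper instead first proves, \emph{without using the inverse at all}, that $E_m\times_{J_r,G_0,e}M$ satisfies the same composed-bibundle identity as $E_i$, then composes the two identifications and cancels the common $E_m$ factor by principality to obtain $\psi\colon E_m\times_{G_0}M\to E_i$. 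Your route is shorter for the lemma itself; the paper's detour is deliberate, because the inverse-free half of its argument is exactly what is reused in Proposition \ref{prop:inverse} to reconstruct the inverse from $m$ and $\bar e$ alone. Two remarks on your sketch. First, the phrase ``this is precisely the data of a point of $E_m$ lying over $e(M)$'' hides the only nontrivial step: the composite bibundle is a quotient by $G_1\times_M G_1$, so to produce an honest point of $E_m$ you must choose a representative $\eta$ with $J_l(\eta)=(J_l(\epsilon),J_r(\epsilon))$ and then translate it by the $G_1$-component of $c_r([(1,\epsilon,\eta)])$ so that its $J_r$-image lands in $e(M)$; independence of the choice of $\eta$, and smoothness via local sections, follow from principality of the right $G$-action on $E_m$ over $G_0\times_M G_0$ (Lemma \ref{lemma:kan22}), which is the same cancellation mechanism the paper uses. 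Second, the equivariance of $\Psi$ needs only that $c_r$ is a morphism of bibundles, hence commutes with moment maps and residual actions; the coherence axioms \ref{itm:br}, \ref{itm:bl} and \ref{itm:bl-br} that you expect to have to invoke are not actually needed, consistent with the paper, which proves this lemma without imposing any higher coherence on the inverse $2$-morphisms.
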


\begin{proof}
We know from the property of $E_i$ that $g\cdot g^{-1} \sim 1$; that
is,
there is an isomorphism of  H.S. bibundles \[((G_1\times_{\bs_G,G_0,J_l}E_i)\times_{\bt_G \times J_r,
G_0\times_M G_0}E_m) /G_1\times_M G_1 \cong G_0\times_{e\circ \bt,
G_0, \bt_G}G_1 , \] where  $G_0\times_{e\circ \bt, G_0, \bt_G}G_1$ presents the map
$e\circ \bbt: \cG\to M \to \cG$.  We will first show that $E_m
\times_{G_0}M$ also has this property.

Let $(\gamma_3, \eta_1, \eta_0)\in ((G_1\times_{G_0}(E_m\times_{J_r,
G_0, e} M))\times_{G_0\times_M G_0}E_m)$ (see
\eqref{diag:inverse}). 
\begin{equation} \label{diag:inverse}
\begin{xy}
*\xybox{(0,0);<3mm,0mm>:<0mm,3mm>::
  ,0
  ,{\xylattice{-5}{0}{-4}{0}}}="S",
  {(-10,-10)*{\bullet}}, {(-10, -12)*{_2}},
     {(12,-7)*{\bullet}}, {(14, -7)*{^{0}}},
     {(10, -10)*{\bullet}}, {(10, -12)*{_{3}}},  {(7, -9)*{^{\eta_1}}},
     {(-10, -10) \ar@{->} (12,-7)}, 
     { (10, -10) \ar@{->} (-10,-10)}, { (10, -10) \ar@{->} (12,-7)} , 
\end{xy} \quad
\begin{xy}
*\xybox{(0,0);<3mm,0mm>:<0mm,3mm>::
  ,0
  ,{\xylattice{-5}{0}{-4}{0}}}="S",
  {(-10,-10)*{\bullet}}, {(-10, -12)*{_2}},
     {(12,-7)*{\bullet}}, {(14, -7)*{^{0}}},
     {(10, -10)*{\bullet}}, {(10, -12)*{_{3}}},
     {(10, -4)*{\bullet}}, {(12,-4)*{^{1}}},
     {(-10, -10) \ar@{->} (12,-7)}, 
     { (10, -10) \ar@{->} (-10,-10)}, { (10, -10) \ar@{->} (12,-7)} , 
      {(10, -4)\ar@{<-}(-10, -10)},  
      {(10, -4)\ar@{<-}(12, -7)}, {(10, -4) \ar@{<-} (10, -10)}, 
      {(6, -7)*{^{\gamma_3}}}, {(7, -9)*{^{\eta_1}}},{(0, -9)*{^{\eta_0}}}
\end{xy}
\end{equation}
Since the right action of $G_1$ on $E_m$ is
principal (now viewing $E_m$ as a bibundle from $G\times_M G$ to
$G$), we have an isomorphism 
\begin{equation}\label{eq:eeg} \Phi: E_m \times_{J_l, G_0\times_M G_0, J_l } E_m \cong E_m \times_{J_r,
  G_0, \bt_G} G_1. \end{equation}
The right $G_1\times_M G_1$ action is
\begin{equation}\label{eq:r-gg-act} (\gamma_3, \eta_1, \eta_0)\cdot
(\gamma_1, \gamma_2)= (\gamma_3 \cdot \gamma_1, (1,
\gamma_2^{-1})\cdot \eta_1, (\gamma_1, \gamma_2)^{-1} \cdot
\eta_0).\end{equation}
Noticing that \[J_l ( \eta_1) = J_l ((\gamma_3, 1) \eta_0)
=(\bs_G(\gamma_3), \pr_2 (J_l(\eta_0))=\pr_2(J_l(\eta_1)) ), \] we have a morphism in $\cC$,
\[ \tilde{\phi}  : (G_1\times_{G_0}(E_m\times_{J_r, G_0, e}
M))\times_{G_0\times_M G_0}E_m)  \to   G_0\times_{e\circ \bt, G_0, \bt_G}G_1,\]
by
\[ (\gamma_3, \eta_1, \eta_0)\mapsto (\bt_G(\gamma_3), \pr_G\circ
\Phi(\eta_1,(\gamma_3, 1) \eta_0) ).  \]
Further, $\tilde{\phi}$ is invariant under the right action \eqref{eq:r-gg-act} 
because the right action and left
action on a bibundle commute. Therefore, $\tilde {\phi}$ descends to a
morphism in $\cC$,
\[ \phi:((G_1\times_{G_0}(E_m\times_{J_r, G_0, e} M))\times_{G_0\times_M G_0}E_m)/G_1\times_M G_1 \to   G_0\times_{e\circ \bt, G_0, \bt_G}G_1.\]
Moreover, $\phi$ is an isomorphism by \eqref{eq:eeg} and the fact that
the first copy 
$G_1$ acts on $G_1$ by multiplication. It is not hard to check that
$\phi$ is equivariant and  commutes with the moment maps of the
bibundles. Therefore,
\[ ((G_1\times_{G_0}(E_m\times_{J_r, G_0, e} M))\times_{G_0\times_M
  G_0}E_m)/G_1\times_M G_1 \cong  G_0\times_{e\circ \bt, G_0,
  \bt_G}G_1 \]as H.S. bibundles.
One proceeds similarly to prove the other symmetric isomorphism
corresponding to $g^{-1}\cdot g \sim 1$.

Let $\varphi$ be the composed isomorphism
\begin{equation}\label{compose}
((G_1\times_{G_0}(E_m\times_{J_r, G_0, e} M))\times_{G_0\times_M
G_0}E_m)/G_1\times_M G_1 \to
((G_1\times_{G_0}E_i)\times_{G_0\times_M G_0}E_m) /G_1\times_M
G_1.
\end{equation}
Suppose $\varphi([(1_g, \eta_1, \eta_2)])=([(1_g, \teta_1,
\teta_2)])$ (we can still assume that the first component is $1$
because the $G_1\times_M G_1$ action on both sides is right
multiplication by the first copy; we can assume that they are $1$ at
the same point because $\varphi$ commutes with the moment maps on
the left leg). Examining the morphisms inside the fibre products,
we have
\[ \pr_1 \circ J_l (\eta_2)=\bt_G(1_g)=\pr_1\circ J_l(\teta_2)=g. \]
Since $\varphi$ commutes with the moment maps on the right leg, we
have
\[ J_r(\eta_2)=J_r(\teta_2).\]
Similarly to the proof of Lemma \ref{lemma:kan22}, we can show
that $G_1\times_{\bs_G, G_0, \pr_1\circ J_l} E_m$ is a Morita
bibundle from $G \times_M G$ to $G \times_M G$. Then $(1_g,
\eta_2)$ and $(1_g, \teta_2)$ are both in $G_1\times_{\bs_G, G_0,
\pr_1\circ J_l} E_m$ and their images under the right moment map
$\bs_G\times J_r$ are both $(g, J_r(\eta_2))$. By principality of
this left $G_1\times_M G_1$ action, there is a unique $(\gamma_1,
\gamma_2)\in G_1\times_M G_1$ such that
\[ (\gamma_1, \gamma_2)\cdot (1, \eta_2)=(1, \teta_2). \]
Therefore $\gamma_1=1$ and $(1, \gamma_2) \cdot \eta_2 =\teta_2$.
This left $G_1 \times_M G_1$ action on $E_m$ is exactly the left
$G_1\times_M G_1$ action on the second copy of $E_m$ in
\eqref{compose}. Using this $\gamma_2$, we have
\[ (1, \teta_1, \teta_2) \cdot (1, \gamma_2) =(1, \gamma_2^{-1})\cdot (1,\teta_1,\teta_2)= (1, \eta_1', \eta_2).\]
Therefore the isomorphism
\[\varphi: \; [(1_g, \eta_1, \eta_2)]\mapsto [(1_g, \eta'_1, \eta_2)]\]
induces a map $\psi: E_m\times_{G_0} M \to E_i$ by $ \eta_1
\mapsto \eta_1'$. It's routine to check that $\psi$ is an isomorphism
of Morita bibundles.
\end{proof}

We have seen in this lemma that the $2$-identities satisfied by
$E_i$ are actually naturally satisfied by $E_m \times_{J_r, G_0,
e}M$. Notice that for the first part of the proof, we didn't use
any information involving the inverse map.  Our conclusion is that
the inverse map represented by $E_i$ can be replaced by $E_m
\times_{J_r, G_0, e}M$ without any further conditions (not even on
the $2$-morphisms) because the natural $2$-morphisms coming along
with the bibundle $E_m \times_{J_r, G_0, e}M$ naturally go well with
the other $2$-morphisms, the $a$'s and $b$'s.

\begin{prop} \label{prop:inverse}
A stacky groupoid $\cG$ in $(\cC, \cT, \cT')$ can also be  defined by
replacing the axioms involving inverses by  the axiom 
that  
\center{$E_m \times_{J_r, G_0, e}M$ is a Morita bibundle from $G$ to
$G^{\op}$ for some good presentation $G$ of $\cG$.}
\end{prop}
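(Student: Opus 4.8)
The plan is to read Proposition \ref{prop:inverse} as an equivalence between two descriptions of the same notion: the original Definition \ref{def:sliegpd}, in which one posits an inverse isomorphism $\bar{i}:\cG\to\cG$ subject to the two identities of item 5, and the modified description in which those identities are dropped in favour of the single requirement that the bibundle $E_m\times_{J_r,G_0,e}M$ of \eqref{eq:bd-inv} be a Morita bibundle from $G$ to $G^{\op}$. I would prove the two implications separately, assembling the lemmas already established in this section and introducing essentially no new computation.

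For the direction ``original $\Rightarrow$ modified'' there is nothing to add: Lemma \ref{proof-morita} already shows that whenever $\cG\rra M$ is a stacky groupoid in the sense of Definition \ref{def:sliegpd}, the bibundle $E_m\times_{J_r,G_0,e}M$ is a Morita equivalence from $G$ to $G^{\op}$, with moment maps $\pr_1\circ J_l$ and $J_r$. Here the inverse enters only through Lemma \ref{lemma:kan22}, which is legitimately available because the inverse is part of the data; hence the modified axiom holds automatically.

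For the converse ``modified $\Rightarrow$ original'' I would take the given Morita bibundle $B:=E_m\times_{J_r,G_0,e}M$ from $G$ to $G^{\op}$ and \emph{define} $\bar{i}:\cG\to\cG$ to be the isomorphism of stacks that $B$ presents. Since a groupoid is always isomorphic to its opposite via the arrow-inverse $i_G$, the groupoid $G^{\op}$ presents the same stack $\cG$, so a biprincipal bibundle from $G$ to $G^{\op}$ is an invertible H.S.\ morphism, i.e.\ an honest stack isomorphism $\cG\xrightarrow{\sim}\cG$. It then remains to produce the two $2$-morphisms of item 5, namely $m\circ(\bar{i}\times\id\circ\Delta)\Rightarrow\bar{e}\circ\bbs$ and $m\circ(\id\times\bar{i}\circ\Delta)\Rightarrow\bar{e}\circ\bbt$. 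These are exactly the two isomorphisms of H.S.\ bibundles exhibited in the first half of the proof of Lemma \ref{isom}, for instance
\[ \big((G_1\times_{G_0}B)\times_{G_0\times_M G_0}E_m\big)/(G_1\times_M G_1)\;\cong\;G_0\times_{e\circ\bt,G_0,\bt_G}G_1, \]
together with its mirror image corresponding to $g^{-1}\cdot g\sim 1$. The decisive point, already emphasised in the discussion following Lemma \ref{isom}, is that this half of the argument never refers to a pre-existing inverse: it uses only $E_m$, the principality furnished by Lemma \ref{lemma:kan22} (now part of the given Morita data), and the canonical isomorphism \eqref{eq:eeg}.

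Finally I would confirm that no further conditions are needed, and here lies the step I would treat most carefully: the construction of $\bar{i}$ must be genuinely non-circular, in the sense that its two defining $2$-morphisms are produced from $E_m$ and the given Morita datum alone. This is guaranteed by the citation above. By Remark \ref{rk:slgpd}, Definition \ref{def:sliegpd} imposes no higher coherences on the $2$-morphisms involving the inverse, so the only remaining compatibility of the natural $2$-morphisms of $B$ with the $a$'s and $b$'s is automatic, arising from the canonical isomorphisms of fibre products of bibundles, which commute with the left and right actions. A good presentation $G$ with a lift $e:M\to G_0$—needed even to form $E_m$ and $B$—exists by Lemmas \ref{lemma:embedding} and \ref{lemma:embedding-non-etale}, which closes the argument and shows the two descriptions of a stacky groupoid to be equivalent.
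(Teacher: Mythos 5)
Your proposal is correct and follows essentially the same route as the paper: the forward implication is exactly Lemma \ref{proof-morita} (equivalently Lemma \ref{isom}), and the converse defines $\bar{i}$ as the stack morphism presented by $E_m\times_{J_r,G_0,e}M$ and obtains the two required $2$-morphisms from the first half of the proof of Lemma \ref{isom}, whose independence from any pre-existing inverse is precisely the point the paper also invokes. Your added remarks on Remark \ref{rk:slgpd} and on the existence of good charts are consistent with, and slightly more explicit than, the paper's two-paragraph proof.
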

\begin{proof} It is clear from Lemma \ref{isom} that the existence of the
inverse map guarantees that the bibundle  $E_m \times_{J_r, G_0,
e}M$ is a Morita bibundle from $G$ to $G^{\op}$ for a good
presentation $G$ of $\cG$.

On the other hand, if  $E_m \times_{J_r, G_0, e}M$ is a Morita
bibundle from $G$ to $G^{\op}$ for some presentation $G$ of $\cG$,
then we construct the inverse map $i: \cG \to \cG$ by this
bibundle. Because of the nice properties of  $E_m \times_{J_r,
G_0, e}M$ that we have proven in the first half of Lemma
\ref{isom}, this newly defined inverse map satisfies all the
axioms that the inverse map satisfies. \end{proof}

\begin{remark}
This theorem holds also for W-groupoids and the proof is similar.
\end{remark}

\begin{remark}
There is similar
treatment of the antipode in hopfish algebras \cite{twz}. In fact
SLie groups are a geometric version of hopfish algebras.
The geometric quotient \eqref{eq:bd-inv} corresponds to
$\hom_\cA (\bepsilon, \bDelta)$ in the case of hopfish algebra.
Thus the new definition of SLie group modulo
$2$-morphisms is analogous to the definition of hopfish algebra.
\end{remark}

Sometimes the inverse map of a stacky groupoid is given by a specific
groupoid isomorphism $i: G \to G$ on some presentation (for
example $\cG(A)$ and $\cH(A)$ in \cite{tz} and (quasi-)Hopf
algebras as the algebra counter-part).

\begin{lemma}
The inverse map of a stacky groupoid $\cG$ in $(\cC, \cT, \cT')$ is given by a groupoid
isomorphism $i: G\to G$ for some presentation $G$ if and only if
on this presentation, $E_m \times_{J_r, G_0, e}M$ is a trivial
right $G$ principal bundle over $G_0$.
\end{lemma}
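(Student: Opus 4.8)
The plan is to run the statement through the standard dictionary between Hilsum--Skandalis bibundles and strict groupoid morphisms, applied to the bibundle $E:=E_m\times_{J_r,G_0,e}M$. By Lemma~\ref{isom} this $E$ represents the inverse $\bar{i}:\cG\to\cG$, and by Lemma~\ref{proof-morita} it is a Morita self-equivalence of $G$ once the right $G^{\op}$-action of the second copy is turned into a right $G$-action via the groupoid inverse of $G$. Under this $G$-$G$ structure the left anchor $J_l^E=\pr_1\circ J_l\colon E\to G_0$ carries the right $G$-action principally; this map to $G_0$ is precisely the \emph{right $G$-principal bundle over $G_0$} named in the statement, and its triviality means the existence of a global section $\sigma\colon G_0\to E$ with $J_l^E\circ\sigma=\id$.

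The single external fact I would invoke, from \cite{hs,m:bibundle}, is that a bibundle $B$ from a groupoid $H$ to a groupoid $K$ is isomorphic to the graph bibundle $E_\phi=H_0\times_{\phi_0,K_0,\bt}K_1$ of a strict morphism $\phi\colon H\to K$ if and only if its left anchor $J_l^B\colon B\to H_0$ is a trivial right $K$-principal bundle. One implication is immediate, since $E_\phi$ has the canonical unit section $x\mapsto(x,1_{\phi_0(x)})$. For the other, a section $\sigma$ of $J_l^B$ yields $\phi_0:=J_r^B\circ\sigma$ on objects, and for $h\in H_1$ the two points $h\cdot\sigma(\bs_H(h))$ and $\sigma(\bt_H(h))$ lie in the same $J_l^B$-fibre, so by principality of the right $K$-action there is a unique $\phi_1(h)\in K_1$ with $h\cdot\sigma(\bs_H(h))=\sigma(\bt_H(h))\cdot\phi_1(h)$; a routine equivariance check shows that $\phi=(\phi_0,\phi_1)$ is a strict morphism and that the action map furnishes an isomorphism $B\cong E_\phi$.

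Specializing to $H=K=G$, I would then argue both directions. Forward: if the inverse is given by a strict groupoid isomorphism $i\colon G\to G$, then $E_i$ is the graph bibundle $G_0\times_{i_0,G_0,\bt_G}G_1$ of $i$, which carries the unit section $x\mapsto(x,1_{i_0(x)})$ and is therefore a trivial right $G$-principal bundle; since $E\cong E_i$ by Lemma~\ref{isom}, the same holds for $E$. Backward: a section $\sigma$ of $J_l^E$ produces, by the dictionary above, a strict morphism $i\colon G\to G$ with $E\cong E_i$ as $G$-$G$ bibundles. Because $E$ is a Morita equivalence, $E_i$ is invertible in the bicategory of bibundles, so $i$ is an essential equivalence; as it realizes the honest (involutive, stack-invertible) inverse map $\bar{i}$ on the fixed presentation $G$, one concludes that $i$ is the desired groupoid isomorphism, with $i_0$ interchanging $\bs$ and $\bt\colon G_0\to M$ as forced by $\bbs\circ\bar{i}=\bbt$ and $\bbt\circ\bar{i}=\bbs$.

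I expect the main obstacle to be bookkeeping rather than depth, concentrated in two places. First, one must carry the $G$-versus-$G^{\op}$ identification (effected by the groupoid inverse of $G$) consistently through every anchor and action, so that the section of $J_l^E$ really yields an automorphism of $G$ that swaps the two maps to $M$. Second, and more delicate, is the promotion in the backward direction from a strict \emph{morphism} to a strict \emph{isomorphism}: this does not hold for an arbitrary Morita self-equivalence, and one must use that $E$ presents the invertible inverse map together with the rigidity of the good presentation; the equivariance verifications that $\phi_1$ respects multiplication and units, and that $B\cong E_\phi$, are direct consequences of principality and I would only sketch them.
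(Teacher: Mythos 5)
Your proposal is correct and follows essentially the same route as the paper: the paper's proof is exactly the reduction to Lemma~\ref{isom} together with the standard fact that an H.S.\ bibundle is (isomorphic to) the graph of a strict morphism if and only if it is trivial as a right principal bundle, which you spell out in full. Your extra care about the $G$ versus $G^{\op}$ bookkeeping and about upgrading the resulting strict morphism to an isomorphism only makes explicit points the paper's two-line proof leaves implicit.
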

\begin{proof} It follows from Lemma \ref{isom} and the fact that the
  inverse is given by a morphism $i: G\to G$ if and only if
the bibundle $E_i$ is trivial.
\end{proof}

\section{$2$-groupoids and stacky groupoids}
\subsection{From stacky groupoids to  $2$-groupoids}\label{sec:slie-2}

Suppose $\cG\rra M$ is a stacky groupoid object in $(\cC, \cT, \cT')$; in this section we
construct a corresponding $2$-groupoid object $X_2 \Rrightarrow X_1 \Rightarrow X_0$
in $(\cC, \cT'')$. When $\cG\rra M$ is an SLie groupoid,  what we
construct is a
Lie $2$-groupoid. When  $\cG\rra M$ is further a W-groupoid, the
corresponding Lie $2$-groupoid is \emph {$2$-\'etale}; that is, the maps
$X_2 \to \hom(\Lambda[2,j], X)$ are \'etale for $j=0,1,2$.

\begin{thm}\label{thm:slie-2} A  stacky  groupoid object
$\cG \rra M$ in $(\cC, \cT, \cT')$  with a good chart $G_0$ of $\cG$ corresponds to a
 $2$-groupoid object $X_2
\Rrightarrow X_1 \Rightarrow X_0$ in $(\cC, \cT'')$.

A W-groupoid with a good \'etale chart corresponds to a $2$-\'etale Lie
$2$-groupoid.
\end{thm}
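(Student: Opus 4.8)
The plan is to read the first three layers of the desired $2$-groupoid directly off the good presentation $G_1\rra G_0$ of $\cG$ (with $e\colon M\to G_0$ presenting $\be$, supplied by Lemma~\ref{lemma:embedding}), and then verify the axioms of Prop-Def~\ref{def:finite-2gpd}. Concretely I would set $X_0=M$, $X_1=G_0$, and $X_2=E_m$, the H.S.\ bibundle of \eqref{eq:inv-construct} presenting $m$; being an H.S.\ bibundle, $E_m$ is automatically an object of $\cC$, so representability of the top layer is free. The low structure maps are $d^1_0=\bt$, $d^1_1=\bs$ and $s^0_0=e$; the three faces of a $2$-simplex $\eta\in E_m=X_2$ are $d_2=\pr_1\circ J_l$ (edge $01$), $d_0=\pr_2\circ J_l$ (edge $12$) and $d_1=J_r$ (edge $02$), so that a point of $E_m$ encodes a triangle together with the $2$-cell witnessing that its long edge is a composite of the two short ones. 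The degeneracies $s^1_0,s^1_1\colon G_0\to E_m$ are the degenerate (unit) triangles extracted from the right and left unit $2$-morphisms $b_r,b_l$ of Def.~\ref{def:sliegpd}; the simplicial identities \eqref{eq:face-degen} up to level $2$ then follow from the moment-map relations and from item~\ref{itm:m1} of Def.~\ref{def:sliegpd}.

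Next I would check the $1$-Kan and $2$-Kan conditions \ref{itm:st}--\ref{itm:dd}. The $1$-Kan condition is immediate: $\bt,\bs\colon G_0\to M$ present the surjective projections $\bbt,\bbs\colon\cG\to M$, hence are covers in $\cT''$. For the $2$-Kan condition, $d_0\times d_2\colon E_m\to X_1\times_{\bt,X_0,\bs}X_1$ is, up to a swap, the left moment map $J_l$ of the bibundle $E_m$, which is a surjective projection; the remaining two maps $d_0\times d_1\colon E_m\to\Lambda(X)_{2,2}=G_0\times_{\bs,M,\bs}G_0$ and $d_1\times d_2\colon E_m\to\Lambda(X)_{2,0}=G_0\times_{\bt,M,\bt}G_0$ are exactly the map $(\pr_2\circ J_l)\times J_r$ of Lemma~\ref{lemma:kan22} and its symmetric analogue (obtained by replacing $E_m$ with $G_1\times_{G_0}E_m$, as in Lemma~\ref{proof-morita}). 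Thus all the covering conditions of the first part reduce to the already-established principality/projection statements of Section~\ref{sec:inverse}.

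I would then produce the $3$-multiplications $m_i\colon\Lambda(X)_{3,i}\to X_2$ from the associator $a$. A $3$-simplex is a tetrahedron whose four faces lie in $E_m$; given three of them forming a horn $\Lambda(X)_{3,i}$, the four face $2$-cells can be assembled along the associativity $2$-morphism $a\colon m\circ(m\times\id)\Rightarrow m\circ(\id\times m)$, and the missing face is recovered by cancelling a common leading arrow — the cancellation being legitimate because the relevant $G$-actions are principal (Lemmas~\ref{lemma:kan22} and~\ref{proof-morita}), which is precisely what upgrades Duskin's set-level recipe \cite{duskin,duskin2} to a genuine morphism in $\cC$. With $m_i$ in hand, the three remaining clauses of Prop-Def~\ref{def:finite-2gpd} follow by matching coherences: item~\ref{itm:m-iso} (the induced $\Lambda(X)_{3,i}\to\Lambda(X)_{3,j}$ are isomorphisms) is the invertibility of $a$; the unit compatibilities \eqref{coco} translate the coherence axioms \ref{itm:br}, \ref{itm:bl} and \ref{itm:bl-br} on $b_r,b_l,a$; and the associativity \eqref{pic:5-gon} is the transcription of the cube/pentagon coherence \ref{itm:a-higher} of $a$. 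For the second statement, if $\cG$ is \'etale with a good \'etale chart $G_0$, then $E_m$ is an \'etale bibundle and all the moment maps above are \'etale rather than merely surjective submersions, so the maps $X_2\to\Lambda(X)_{2,j}$ are \'etale and $X$ is $2$-\'etale.

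The genuinely hard part is the last paragraph, not the Kan conditions: namely showing that the set-theoretic Duskin construction of the $m_i$ is realized by honest morphisms in $\cC$ and that \eqref{pic:5-gon} really follows from \ref{itm:a-higher}. The obstacle is twofold — first, representability of the domains $\Lambda(X)_{3,i}$ and of the assembled face, which I expect to control exactly as in Lemma~\ref{lemma:indct-mfd-equi} using that the $2$-Kan maps are covers; and second, the purely coherence-theoretic bookkeeping of checking that the pentagon identity built from $a$ over the $4$-simplex \eqref{pic:5-gon} coincides with the cube relation \ref{itm:a-higher}. This is where the geometric enrichment demands care beyond the set-theoretic case, since every $2$-morphism must be interpreted as an isomorphism of bibundles and the cancellations must be justified by principality rather than by elementwise group-like inverses.
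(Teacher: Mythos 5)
Your proposal is correct and follows essentially the same route as the paper's proof: the same three layers $X_0=M$, $X_1=G_0$, $X_2=E_m$ with faces given by the moment maps and degeneracies by $b_l^{-1}\circ e_G$, $b_r^{-1}\circ e_G$; the Kan conditions reduced to the projection property of $J_l$ and to Lemma~\ref{lemma:kan22} and its symmetric analogue; the $3$-multiplications extracted from the associator $a$ using principality of the $G$-actions to make Duskin's recipe a morphism in $\cC$ (the paper's diagram \eqref{eq:m-a}); the coherences \eqref{coco} from the unit axioms; associativity from the cube condition \ref{itm:a-higher}; and the same \'etaleness argument for the W-groupoid case. The only discrepancy is the purely conventional swap in labeling $d^1_0,d^1_1$ as $\bt,\bs$ versus the paper's \eqref{eq:stru-maps}.
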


\paragraph{The construction of $X_2 \Rrightarrow X_1 \Rightarrow
X_0$} \label{pa:x}
Given a stacky groupoid object $\cG\rra M$
in $(\cC, \cT, \cT') $ and a good groupoid presentation
$G_1 \rra G_0$  of $\cG$, let  $E_m$ be
the H.S. bimodule presenting the morphism $m$. Let $J_l: E_m \to
G_0\times_M G_0$ and $J_r: E_m \to G_0$ be the moment maps of the
bimodule $E_m$. Notice that for a stacky groupoid, $g\cdot 1 \simeq g $
up to a $2$-morphism; that is, $m|_{\cG \times_M M} \simeq \id$ up to a
$2$-morphism. Translating this into groupoid language,  $J_l^{-1}
(G_0\times_M M)$ and $G_1$ are the H.S. bimodules presenting $m|_{\cG
  \times_M M}$ and $\id$ respectively.
By the definition of stacky groupoids, the isomorphism is provided by $b_r:
J_l^{-1}(G_0\times_M M)\to G_1$. Similarly, we have the
isomorphism $b_l: J_l^{-1}(M \times_M G_0) \to G_1$.

We construct
\[ X_0 =M , X_1=G_0, X_2=E_m \]
with the structure maps
\begin{equation} \label{eq:stru-maps}
\begin{split}
 \d^1_0=\bs, \d^1_1=\bt: X_1\to X_0, \quad &\d^2_0=\pr_2 \circ J_l, \d^2_1=J_r, d^2_2=\pr_1\circ J_l  : X_2 \to X_1,\\
 s^0_0=e: X_0 \to X_1, \quad
 & s^1_0= b_l^{-1}\circ e_G, s^1_1= b_r^{-1}\circ e_G: X_1 \to X_2
\end{split}
\end{equation} where $\pr_i$ is the $i$-th projection
$G_0\times_M G_0 \to G_0$. Item \ref{itm:b-on-M} in Def.
\ref{def:sliegpd} implies that $s^1_0\circ s_0^0=s^1_1 \circ s^0_0$. The
other coherence conditions in \eqref{eq:face-degen} are implied by the fact that the
2-morphism  preserves moment maps.  
We still need the $3$-multiplication maps
\[
m_i:\;  \Lambda(X)_{3, i}   \to X_2 \quad i=0,\dots,3.
\]
Let us first construct $m_0$. Notice that in the $2$-associative
diagram, we have a $2$-morphism $a: m\circ(m \times \id) \to m\circ
(\id \times m)$. Translating this into the language of groupoids, we
have the following isomorphism of bimodules:
\begin{equation}\label{eq:a}
a: (( E_m \times_{G_0} G_1)\times_{G_0 \times_M G_0}E_m )/ (G_1
\times_M G_1) \to ( (G_1 \times_{G_0} E_m)\times_{G_0 \times_M
G_0}E_m)/(G_1 \times_M G_1).
\end{equation}
The plan of proof is to take $(\eta_1, \eta_2, \eta_3) \in \Lambda(X)_{3,0}$. Then
$(\eta_3, 1, \eta_1)$ represents a class in $(E_m\times_{G_0} G_1)
\times_{G_0 \times_M G_0}E_m/ {\sim}$ (we write $\sim$ when it is
clear which groupoid action  is meant). Moreover, its image under
$a$ can be represented by $( 1, \eta_0,\eta_2)$; that is,
\[ a( [(\eta_3, 1,\eta_1)] )=  [( 1, \eta_0,\eta_2)].
 \]
Then we arrive naturally at $\eta_0$.

Now we prove it strictly. To simplify our notation, we call the left and right hand sides of
\eqref{eq:a}  $L$
and $R$ respectively. Since the action on $G_1$'s is by
multiplication, we have $G_1$ principal bundles $\tilde{L}\to L=
\tilde{L}/G_1$ and $\tilde{R} \to R = \tilde{R}/G_1$, where 
\[ \tilde{L} = E_m \times_{J_r, G_0, \pr_1 J_l } E_m , \quad \tilde{R}=
E_m \times_{J_r, G_0, \pr_2 J_l } E_m,\]
with $G_1$ principal actions
\[ (\eta_3, \eta_1) \cdot \gamma' = ( \eta_3 \gamma', (\gamma',
1)^{-1} \eta_1), \quad (\eta_0, \eta_2) \cdot \gamma'= (\eta_0
\gamma', (1, \gamma')^{-1} \eta_2); \] 
they are presented by diagrams   
\[
\begin{xy}
*\xybox{(0,0);<3mm,0mm>:<0mm,3mm>::
  ,0
  ,{\xylattice{-5}{0}{-4}{0}}}="S",
{(6, 6)*{\bullet}}, {(8, 8)*{_3}},  
{(-6, 6)*{\bullet}}, {(-8, 8)*{_0}}, 
{(6, -6)*{\bullet}}, {(8, -8)*{_2}},
{(-6, -6)*{\bullet}}, {(-8, -8)*{_1}},
{(6,6)\ar@{->} (-6, 6)}, {(6, 6) \ar@{->}^{g_3} (6, -6)},
{(6, -6) \ar@{->}^{g_2} (-6, -6)}, {(6, -6) \ar@{->} (-6, 6)},
{(-6,-6) \ar@{->}^{g_1} (-6, 6)},
{(-4, -2)*{\eta_3}}, {(4, 2)*{\eta_4}},
\end{xy},
\quad
\begin{xy}
*\xybox{(0,0);<3mm,0mm>:<0mm,3mm>::
  ,0
  ,{\xylattice{-5}{0}{-4}{0}}}="S",
{(6, 6)*{\bullet}}, {(8, 8)*{_3}},  
{(-6, 6)*{\bullet}}, {(-8, 8)*{_0}}, 
{(6, -6)*{\bullet}}, {(8, -8)*{_2}},
{(-6, -6)*{\bullet}}, {(-8, -8)*{_1}},
{(6,6)\ar@{->} (-6, 6)}, {(6, 6) \ar@{->}^{g_3} (6, -6)},  {(6, 6) \ar@{->} (-6, -6)},
{(6, -6) \ar@{->}^{g_2} (-6, -6)},
{(-6,-6) \ar@{->}^{g_1} (-6, 6)},
{(-4, 2)*{\eta_2}}, {(4, -2)*{\eta_0}},
\end{xy}
\]
which all together fit inside
\[\begin{xy}
*\xybox{(0,0);<3mm,0mm>:<0mm,3mm>::
  ,0
  ,{\xylattice{-5}{0}{-4}{0}}}="S"
  ,{(-10,-10)*{\bullet}}, {(-10, -12)*{_1}},
     ,{(0,0)*{\bullet}}, {(0, 2)*{^{0}}}, {(10, -10)*{\bullet}},
     {(10, -12)*{_{2}}}, {(15, -4)*{\bullet}}, {(17,-5)*{^3}},
     {(-10, -10) \ar@{->}^{g_1} (0,0)},
     { (10, -10) \ar@{->}^{g_2} (-10,-10)},
      { (10, -10) \ar@{->} (0,0)},
     {(15, -4)\ar@{->}^{g_3} (10, -10)},
     {(15, -4)\ar@{->}(0, 0)},
     {(15, -4)\ar@{->}(-10,-10)},
\end{xy} 
\]
We imagine that the $j$-dimensional faces of the picture are
elements of $X_j$. We also put $g_i$'s in the picture to help. We view
$a: (g_1 g_2) g_3 \to g_1 (g_2 g_3)$, and $\eta_3 \in E_m$ is
responsible for $g_1 g_2$, etc.

Similarly to Lemma \ref{lemma:kan22}, $(\pr_1\circ J_l) \times J_r: E_m \to
G_0\times_{\bt, M, \bt} G_0$ is a $G$ principal bundle with left $G$
action induced by the second copy of the $G\times_M G$ bibundle action
on $E_m$. Hence we have
\[ E_m \times_{ G_0 \times_{M} G_0} E_m \cong G_1 \times_{\bs_G, G_0,
  \pr_2 \circ J_l } E_m,
\quad (\tilde{\eta}_2, \eta_2) \mapsto (\gamma, \eta_2), \quad
\text{with}\; \tilde{\eta}_2 =(1, \gamma) \eta_2 \] 
which gives rise to an isomorphism $\tilde{\phi}$ in $\cC$, 
\[
E_m \times_{J_r, G_0, \pr_2 J_l} E_m \times_{G_0 \times_M G_0} E_m \cong
E_m \times_{J_r, G_0, \pr_2 J_l} E_m \times_{\pr_2 J_l , G_0, \bs_G} G_1,
\] 
given by 
\[ \tilde{\phi}( \eta_0, \tilde{\eta}_2, \eta_2) = (\eta_0 \gamma,
\eta_2, \gamma). \]
Moreover $\tilde{\phi}$ is $G$ equivariant w.r.t.\ the following
$G$ actions  
\[ (\eta_0, \tilde{\eta}_2, \eta_2) \cdot \gamma' = (\eta_0 \gamma',
(1, \gamma')^{-1} \tilde{\eta}_2, \eta_2) , \quad (\eta_0, \eta_2,
\gamma)\cdot \gamma' = (\eta_0, \eta_2, \gamma'^{-1} \gamma). 
\]
Hence it gives an isomorphism in $\cC$ between the quotients,
\[ \phi: R \times_{G_0\times_M G_0, J_l } E_m \cong \tilde{R}. 
\] 
We have a commutative diagram 
\begin{equation} \label{eq:m-a}
\xymatrix{
\Lambda(X)_{3,0} \ar[rr]^{(\eta_1, \eta_2,
  \eta_3)\mapsto \eta_2} \ar[d]_{(\eta_1, \eta_2,
  \eta_3)\mapsto [(\eta_1, \eta_3)]} & & E_m \ar[d]^{J_l }\\
L\ar[r]^a & R \ar[r] & G_0 \times_M G_0
.}
\end{equation}
Hence there exists a morphism in $\cC$ from $\Lambda(X)_{3,0}$
to the fibre product $ R \times_{G_0\times_M G_0, J_l } E_m \cong
\tilde{R}$, and $m_0$ is defined as the composition of morphisms $\Lambda(X)_{3,0} \to \tilde{R}
\xrightarrow{\pr_1} E_m$.

For other $m$'s, we precede in a similar fashion.  More
precisely, for $m_1$ one can make the same definition as for $m_0$
but using $a^{-1}$. It is even easier to define $m_2$ and $m_3$. Thus we
realize that given any three $\eta$'s, we can always put them in
the same spots as we did for $m_0$. Then any three of them
determine the fourth. Hence the $m$'s are compatible with each
other.

\paragraph{Proof that what we construct is a $2$-groupoid}
By Prop-Def.\ \ref{def:finite-2gpd}, to show that the above construction
gives us a $2$-groupoid in $(\cC, \cT'')$, we just have to show that the $m_i$'s
satisfy the coherence conditions, associativity and the $1$-Kan and
$2$-Kan conditions. Condition $1$-Kan is implied by the fact that
$\bs,\bt: G_0\rra M$ are projections; $\Kan(2,1)$ is
implied by the fact that the moment map $J_l: E_m\to
G_0\times_{\bs,M, \bt } G_0$ is a projection;
$\Kan(2,2)$ is implied by Lemma \ref{lemma:kan22}; and  $\Kan(2,0)$ can
be proven
similarly.

\subparagraph{The coherence conditions}
The first identity in eq.\ \eqref{coco} corresponds to an identity of
$2$-morphisms,
\[ \big (1\cdot (g_2 \cdot g_3) \overset{a}{\sim} (1\cdot g_2)\cdot g_3
\sim g_2\cdot g_3 \big) = \big( 1\cdot (g_2 \cdot g_3) \sim
g_1\cdot g_2 \big).
\] More precisely, restrict the two bimodules in \eqref{eq:a} to $M \times_M G_0 \times_M
G_0$; then we get $E_m$ on the left hand side because $J_l^{-1} (M
\times_M G_0)\overset{b_l}{\cong}G_1$ and $\big( (G_1\times_M G_1)
\times_{G_0\times_M G_0}E_m \big) /G_1\times_M G_1 =E_m$. In fact, the
elements in  $(E_m
\times_{G_0}G_1)\times_{G_0\times_M G_0} E_m|_{M\times_M G_0
\times_M G_0} /{\sim}$ have the form $[(s_0\circ d_2 (\eta), 1,
\eta)]$, and  the isomorphism to $E_m$ is given by $[(s_0\circ d_2
(\eta), 1,\eta)]\mapsto \eta$. Similarly for the right hand side;
i.e., $[(s_0\circ d_1(\eta), 1, \eta)]\mapsto \eta$ gives the other
isomorphism. By \ref{itm:bl} in Def.\ \ref{def:sliegpd}, the
composition of the first and the inverse of the second map is $a$
(restricted to the restricted bimodules), so we have
\[ a ([(s_0\circ d_2 (\eta), 1,\eta )])= ([(1, \eta,s_0\circ
d_1(\eta))]),
\]
which implies the first identity in \eqref{coco}. The rest follows
similarly.

\subparagraph{Associativity} \label{sec:3-asso}
To prove the associativity, we use the cube condition
\ref{itm:a-higher} in Def.\ \ref{def:sliegpd}. Let $\eta_{ijk}$'s denote
the faces in $X_2$ fitting in diagram \eqref{eq:5gon-g}. Suppose
we are given the faces $\eta_{0i4} \in X_2$ and  the faces
$\eta_{0ij} \in X_2$. Then we have two ways to determine the face
$\eta_{123}$ using $m$'s as described in Prop-Def.\
\ref{def:finite-2gpd}. We will show below that these two
constructions give the same element in $X_2$.

Translate the cube  condition into the language of groupoids.
The morphisms become H.S. bibundles and the $2$-morphisms become the
morphisms between these bibundles. The cube condition tells us that the
following two compositions of morphisms are the same (where for
simplicity, we omit the base space of the fibre products
and the groupoids by which we take quotients):
\[
\begin{split}
     &  (E_m \times G_1 \times G_1)\times (E_m \times G_1) \times E_m /{\sim}  \quad \longleftrightarrow \quad ((g_1g_2)g_3)g_4 \\
\overset{\id\times a}{\lra}\null
& (E_m \times G_1 \times G_1)\times (G_1 \times E_m) \times E_m /{\sim}   \quad \longleftrightarrow \quad (g_1 g_2)(g_3g_4)\\
\overset{\id}{\lra}\null
& (G_1\times G_1 \times E_m) \times (E_m \times G_1)\times E_m/{\sim}  \quad \longleftrightarrow \quad (g_1 g_2) (g_3 g_4) \\
\overset{\id \times a} {\lra}\null & (G_1 \times G_1 \times E_m)\times
(G_1 \times E_m )\times E_m /{\sim}  \quad \longleftrightarrow \quad g_1(g_2(g_3
g_4))
\end{split}
\]
and
\[
\begin{split}
  & (E_m \times G_1 \times G_1)\times (E_m \times G_1) \times E_m /{\sim}  \quad \longleftrightarrow \quad ((g_1g_2)g_3)g_4 \\
\overset{a\times \id}{\lra}\null &(G_1\times E_m \times G_1) \times (E_m
\times G_1) \times E_m /{\sim}  \quad \longleftrightarrow \quad   (g_1(g_2g_3))g_4\\
\overset{\id\times a }{\lra}\null
& (G_1 \times E_m \times G_1) \times (G_1 \times E_m)\times E_m/{\sim}  \quad \longleftrightarrow \quad  g_1((g_2g_3)g_4)\\
\overset{ a \times \id} {\lra}\null & (G_1 \times G_1 \times E_m) \times
(G_1 \times E_m )\times E_m/{\sim}  \quad \longleftrightarrow \quad
g_1(g_2(g_3g_4))
.\end{split}
\]
Tracing the element $(\eta_{034}, (\eta_{023}, 1),
(\eta_{012}, 1,1))$ through the first and second composition, it
should end up as the same element. So we have
\begin{equation}\label{eq:5gon-g}
\begin{split}
  & [((\eta_{012}, 1, 1)), (\eta_{023}, 1),\eta_{034} ] \\
\overset{\id \times a}{\mapsto}\null
& [ ( (\eta_{012}, 1, 1), (1, \eta_{234}),\eta_{024} )] \\
\overset{\id}{\mapsto}\null
& [ ((1,1, \eta_{234}), (\eta_{012}, 1), \eta_{024}) ] \\
\overset{\id \times a}{\mapsto}\null & [((1,1, \eta_{234}), (1,
\eta_{124}), \eta_{014})]
,\end{split}
\quad 
\begin{xy}
*\xybox{(0,0);<3mm,0mm>:<0mm,3mm>::
  ,0
  ,{\xylattice{-5}{0}{-4}{0}}}="S"
  ,{(-10,-10)*{\bullet}}, {(-10, -12)*{_1}},
     ,
{(0,6)*{\bullet}},  {(0, 8)*{^{0}}},
{(0,0)*{\bullet}}, {(1, 1)*{^{4}}}, {(10, -10)*{\bullet}},
     {(10, -12)*{_{2}}}, {(15, -4)*{\bullet}}, {(17,-5)*{^3}},
     {(-10, -10) \ar@{->}^{g_1} (0,6)}, 
{(-10, -10) \ar@{<-} (0,0)},
     { (10, -10) \ar@{->}^{g_2} (-10,-10)},
 { (10, -10) \ar@{->} (0,6)},
      { (10, -10) \ar@{<-} (0,0)},
     {(15, -4)\ar@{->}^{g_3} (10, -10)},
{(15, -4)\ar@{->}(0,6)},
     {(15, -4)\ar@{<-}_{g_4} (0,0)},
     {(15, -4)\ar@{->}(-10,-10)},
     {(0,0)\ar@{->} (0,6)}
\end{xy} 
\end{equation}
where by definition of $m_0$, $\eta_{234}=m_0 (\eta_{034}, \eta_{024},
\eta_{023})$ and $\eta_{124}=m_0(\eta_{024}, \eta_{014}, \eta_{012})$, and
\[
\begin{split}
  &[((\eta_{012}, 1, 1), (\eta_{023}, 1), \eta_{034})]  \\
\overset{a\times \id}{\mapsto}\null
&[((1, \eta_{123}, 1), (\eta_{013}, 1), \eta_{034}) ] \\
\overset{\id \times a}{\mapsto}\null
&[((1, \eta_{123}, 1), (1, \eta_{134}), \eta_{014})] \\
\overset{a\times \id} {\mapsto}\null &[((1,1, \eta_{234}), (1,
\eta_{124}), \eta_{014})], 
\end{split}
\]
where by definition of $m_0$, $\eta_{123}= m_0(\eta_{023}, \eta_{013}, \eta_{012})$ and
$\eta_{134}= m_0(\eta_{034}, \eta_{014}, \eta_{013})$. Therefore,
the last map tells us that
\[ \eta_{123}= m_3 ( \eta_{234}, \eta_{134}, \eta_{124}). \]
Therefore associativity holds!

\subparagraph{Comments on the \'etale condition} It is easy to see
that if $G_1\rra G_0$ is an \'etale Lie groupoid, by principality of the right $G$
action on $E_m$, the moment map $E_m\to G_0\times_M G_0$ is an \'etale
Lie groupoid.
Moreover since $E_m \to \Lambda(X)_{2,j}=\Lambda[2,j](X)$ is a
surjective submersion by $\Kan(2,j)$, by dimension counting, it is
furthermore an \'etale map.

\subsection{From $2$-groupoids to stacky groupoids}\label{sec:2-slie}
If $X$ is a $2$-groupoid object in $(\cC, \cT'')$, then $G_1:= d_2^{-1}(s_0(X_0))\subset
X_2$, which is the set of bigons, is a groupoid  over
$G_0:=X_1$ (Lemma \ref{lemma:g1-g0}). Here we might notice that
there is another natural choice for the space of bigons, namely
$\tG_1:= d_0^{-1}(s_0(X_0))$. But $G_1 \cong \tG_1$ by the
following observation: given an element $\eta_3 \in G_1$, it fits
in the following picture,
\[
 \raise.4cm\hbox{
\begin{xy}
*\xybox{(0,0);<3mm,0mm>:<0mm,3mm>::
  ,0
  ,{\xylattice{-5}{0}{-4}{0}}}="S"
,{(0,0)*{\bullet}},    {(0, 2)*{^{0}}}  
,{(0,-4)*{\bullet}},   {(0,-7)*{^{1}}} 
,{(14, -8)*{\bullet}}, {(16, -10)*{^{2}}}
,{(15, -4)*{\bullet}}, {(17,-5)*{^3}}
,{(0,-4)\ar@{->}(0,0)}, 
,{(15,-4)\ar@{->} (0,-4)}, {(15, -4) \ar@{->} (0,0)}
,{(14,-8)\ar@{->}(0,0)} 
, {(14,-8)\ar@{<-}(15,-4)},{(14,-8)\ar@{->}(0,-4)},
\end{xy}} 
\mspace{-100.0mu}
 \begin{split} &\text{In this picture, $1\to
0$ and $2\to 3$ are} \\ &\text{degenerate, and $\eta_2, \eta_1$ are degenerate. }\end{split}
\]
Then $m_0$ gives a morphism 
\begin{equation}\label{eq:g-tg}\varphi: G_1 \to \tG_1
,\end{equation}
and $m_3$ gives
the inverse. Therefore we might consider only $G_1$. Then $G_1
\rra G_0$ presents a stack which has an additional groupoid
structure.

\begin{thm}\label{2-to-slie}
A $2$-groupoid object $X$ in $(\cC, \cT'')$
corresponds to a stacky groupoid object $\cG\rra X_0$ with a good chart in $(\cC, \cT,
\cT')$, 
where $\cG$ is presented by the groupoid object $G_1\rra G_0$.

A $2$-\'etale Lie $2$-groupoid corresponds to a W-groupoid with a good
\'etale chart. 
\end{thm}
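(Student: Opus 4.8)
The plan is to invert the construction of Theorem~\ref{thm:slie-2}, reading off each structure map and coherence $2$-morphism of the stacky groupoid from the faces, degeneracies and $3$-multiplications of $X$. I set $M:=X_0$ and take $\cG$ to be the stack presented by the groupoid $G_1\rra G_0=X_1$, where $G_1=d_2^{-1}(s_0(X_0))$ is the space of bigons; by Lemma~\ref{lemma:g1-g0} this is a genuine groupoid object in $(\cC,\cT,\cT')$, its source and target $\bs_G,\bt_G$ (the restrictions of $d_0,d_1$) being covers as base-changes of the $2$-Kan covers of item~\ref{itm:dd}. The degeneracy $s_0\colon X_0\to X_1=G_0$ lifts the map $M\to\cG$, so $G_0$ is a good chart (a good \'etale chart in the \'etale case), and the source and target $\bbs,\bbt\colon\cG\to M$ are presented by $\bs=d^1_0$ and $\bt=d^1_1$, which are surjective projections by the $1$-Kan condition~\ref{itm:st}.

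Next I would produce the multiplication. The bibundle is $E_m:=X_2$ with moment maps $J_l=(d^2_2,d^2_0)\colon X_2\to G_0\times_M G_0$ and $J_r=d^2_1\colon X_2\to G_0$; the left $G_1\times_M G_1$-action and the right $G_1$-action are defined by the $3$-multiplications $m_0,\dots,m_3$ exactly as the actions of \S\ref{sec:slie-2} are read through \eqref{eq:m-a}. The essential point is to check that these actions are principal and that the quotients are representable, so that $E_m$ is an H.S.\ bibundle presenting a morphism $m\colon\cG\times_M\cG\to\cG$: freeness and properness are the geometric incarnation of the unique-filling conditions encoded in item~\ref{itm:m-iso} (the maps $\L(X)_{3,i}\to\L(X)_{3,j}$ being isomorphisms), $J_l$ being a surjective projection is $\Kan(2,1)$, and the analogue of Lemma~\ref{lemma:kan22}, now fed by $\Kan(2,2)$, supplies the remaining principality. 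The identity $\be\colon M\to\cG$ is presented by $G_1\times_{\bt_G,G_0,e}M$ with $e=s_0$, and the unit $2$-morphisms $b_l,b_r$ are provided by the degeneracies $s^1_0,s^1_1$; the compatibility of item~\ref{itm:b-on-M} follows from the simplicial identity $s^1_0 s^0_0=s^1_1 s^0_0$, and the unit coherences of items~\ref{itm:br}--\ref{itm:bl-br} from the relations~\eqref{coco} together with the simplicial identities.

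The associator $a$ of \eqref{eq:a} is recovered from $m_0$ by declaring $a([(\eta_3,1,\eta_1)])=[(1,\eta_0,\eta_2)]$ with $\eta_0=m_0(\eta_1,\eta_2,\eta_3)$, i.e.\ by inverting the chain of isomorphisms in \eqref{eq:m-a}, where item~\ref{itm:m-iso} guarantees well-definedness as an isomorphism of bibundles. The higher coherence of item~\ref{itm:a-higher} (the cube, equivalently the pentagon) is then precisely the associativity of the $m_i$'s proved in the associativity argument of \S\ref{sec:slie-2}, now traced in the reverse direction: the two composites of $a$'s around the cube both compute $\eta_{123}$ as in \eqref{eq:5gon-g}, and their equality is the identity $\eta_{123}=m_3(\eta_{234},\eta_{134},\eta_{124})$. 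Rather than building an inverse by hand I would invoke Proposition~\ref{prop:inverse}: it suffices that $E_m\times_{J_r,G_0,e}M$ be a Morita bibundle from $G$ to $G^{\op}$, which is the content of the invertibility conditions $\Kan(2,0)$ and $\Kan(2,2)$ and follows as in Lemma~\ref{isom}. In the $2$-\'etale case the maps $X_2\to\L(X)_{2,j}$ are \'etale, whence $\bs_G,\bt_G$ are \'etale and $\cG$ is an \'etale stack with $e=s_0$ an immersion, yielding a W-groupoid with a good \'etale chart.

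The main obstacle I anticipate is the second step: verifying that $X_2$, equipped with the actions coming from the $m_i$'s, is honestly an H.S.\ bibundle, that is, that the $G$-actions are free and proper and the relevant quotients land back in $\cC$. This is exactly where the enrichment over $(\cC,\cT,\cT')$ goes beyond Duskin's set-theoretic correspondence, since set-theoretic bijectivity of the horn maps must be upgraded to principality and representability; the key leverage is item~\ref{itm:m-iso} together with the argument of Lemma~\ref{lemma:kan22}, which convert the $\Kan!$ unique-filling conditions into the required geometric statements.
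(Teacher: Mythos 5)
Your proposal is correct and follows essentially the same route as the paper: the same presentation $G_1=d_2^{-1}(s_0(X_0))\rra G_0=X_1$, the bibundle $E_m=X_2$ with actions and associator read off from the $m_i$'s, and the appeal to Proposition~\ref{prop:inverse} to avoid constructing the inverse by hand. The only slips are attributional — the principality of the actions on $E_m\times_{J_r,G_0,e}M$ comes from $\Kan(3,0)$ and $\Kan!(3,0)$ rather than $\Kan(2,0)$/$\Kan(2,2)$, and the coherence of item~\ref{itm:bl-br} needs the associativity of the $3$-multiplications, not just \eqref{coco} — but the required hypotheses are all available and the argument goes through as you outline.
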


We prove this theorem by several lemmas.

\paragraph{About the stack $\cG$}
\begin{lemma}\label{lemma:g1-g0}
$G_1\rra G_0$ is a  groupoid object in $(\cC, \cT'')$.
\end{lemma}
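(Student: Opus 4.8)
The plan is to put all of the groupoid structure on $G_1\rra G_0$ by restricting the face and degeneracy maps of $X$ together with the $3$-multiplications $m_i$, and then to deduce every groupoid identity from Duskin's set-theoretic computation \cite{duskin}; the only genuinely new content is representability in $\cC$ and the fact that the source and target are covers in $\cT''$, which is what makes $G_1\rra G_0$ a groupoid object in $(\cC,\cT'')$ rather than merely a set-theoretic groupoid.

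First I would realize $G_1$ and its candidate source and target as pull-backs of the $2$-Kan covers. Since $d_1\times d_2:X_2\to\L(X)_{2,0}=X_1\times_{\bt,X_0,\bt}X_1$ is a cover by \ref{itm:dd} and $\bt\circ s_0=\id$ (coherence \eqref{eq:face-degen}), the map $\iota:X_1\to\L(X)_{2,0}$, $g\mapsto(g,s_0\bt g)$, is a morphism of $\cC$, and I claim $G_1=X_2\times_{d_1\times d_2,\,\L(X)_{2,0},\,\iota}X_1$. Indeed, because $d_1\times d_2$ lands in $\L(X)_{2,0}$ we automatically have $\bt(d_1\eta)=\bt(d_2\eta)$, so the condition $d_2\eta\in s_0(X_0)$ forces $d_2\eta=s_0\bt(d_1\eta)$; thus $G_1$ is exactly this fibre product, which exists as the pull-back of a cover along an arbitrary morphism, and the accompanying projection, which is precisely $\bt_G:=d_1|_{G_1}$, is a cover. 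Replacing $d_1\times d_2$ and $\L(X)_{2,0}$ by $d_0\times d_2$ and $\L(X)_{2,1}=X_1\times_{\bt,X_0,\bs}X_1$ gives in the same way that $G_1$ is representable and that $\bs_G:=d_0|_{G_1}$ is a cover.

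Next I would write down the remaining structure maps. The unit is $e_G:=s^1_0:G_0\to G_1$: the identities $d^2_2s^1_0=s^0_0 d^1_1$ and $d^2_0s^1_0=d^2_1s^1_0=\id$ show $s^1_0(g)\in G_1$ with $\bs_G(s^1_0 g)=\bt_G(s^1_0 g)=g$. For the multiplication, given $(\alpha,\beta)\in G_1\times_{\bs_G,G_0,\bt_G}G_1$ I would form the horn in $\L(X)_{3,1}$ whose $d_0,d_2,d_3$ faces are $\beta$, $\alpha$, and the totally degenerate face $s^1_0(d_2\alpha)$, and set $m_G(\alpha,\beta):=m_1$ of this horn, i.e.\ its resulting $d_1$ face; the coherences \eqref{coco} guarantee both that the degenerate face glues correctly into $\L(X)_{3,1}$ and that the output again has degenerate $d_2$, hence lies in $G_1$. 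The inverse $i_G$ is produced the same way from a horn with a single non-degenerate face (this is the morphism $\varphi$ of \eqref{eq:g-tg} followed by the isomorphism $\tG_1\cong G_1$). All the needed fibre products are representable because $\bs_G,\bt_G$ are covers, and all these maps are morphisms of $\cC$ since they are assembled from the $m_i$, the $d_i$ and the $s_i$.

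Finally I would verify the groupoid axioms. Because $\cT''$ is subcanonical (Lemma \ref{lemma:fur-ass} and Assumptions \ref{assump:1}), the Yoneda embedding $\yon$ is fully faithful, so each axiom is an equation between morphisms of $\cC$ that may be checked after applying $\yon$, i.e.\ set-theoretically; there it is exactly Duskin's statement that the bigons of a $2$-groupoid form a groupoid, with associativity coming from the associativity of the $m_i$ (the pentagon/cube of Prop-Def.\ \ref{def:finite-2gpd}), the unit laws from \eqref{coco}, and the inverse laws from item \ref{itm:m-iso} that the induced maps $\L(X)_{3,i}\to\L(X)_{3,j}$ are isomorphisms. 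I expect the main obstacle to be the second paragraph: establishing that $\bs_G$ and $\bt_G$ are genuine $\cT''$-covers (not merely surjective), since this is the precise point where the $1$-Kan and $2$-Kan conditions must be combined and where the pull-back-along-a-section description of $G_1$ is essential; the secondary delicate point is the degeneracy bookkeeping showing that the $d_1$ face output by $m_1$ really lands in $G_1$.
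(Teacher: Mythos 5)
Your proposal is correct and follows essentially the same route as the paper: realize $G_1$ together with its source and target as pull-backs of the $2$-Kan covers $d_i\times d_2$ along degeneracy sections (so representability and the cover property in $\cT''$ come for free), take the unit from $s^1_0$, build multiplication and inverse by filling horns that have a totally degenerate face via the $3$-multiplications $m_i$, and reduce the groupoid axioms to Duskin's set-theoretic computation using the associativity and coherence of the $m_i$. The only differences are cosmetic: you use the opposite naming convention for source and target ($\bs_G=d_0$, $\bt_G=d_1$ versus the paper's $\bs_G=d_1$, $\bt_G=d_0$), and you spell out the pull-back identification for both face maps where the paper does it once and says ``similarly''.
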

\begin{proof} The target and source maps are given by $d^2_0$ and
$d^2_1$. The identity $G_0\to G_1$ is given by $s^1_0: X_1 \to
X_2$. The image of $s^1_0$ is in $G_1 \mathrel{(\subset} X_2)$. Their
compatibility conditions are implied by the compatibility
conditions of the structure maps of simplicial manifolds. Since $G_1 $
is the pull-back of $X_2 \overset{d_1\times d_2}{\thra} X_1
\times_{d_1, X_0, d_0} X_1 $ by the map
 \[X_1 \to X_1 \times_{d_1, X_0, d_0} X_1, \quad \text{with} \; g
 \mapsto (s_0(d_1(g)), g) , \]
$\bs_G=d_1: G_1 \to  X_1$ is a surjective projection. Similarly
$\bt_G$ is also a surjective projection.

The multiplication is given by the
$3$-multiplication of $X$.
\[
 \raise.4cm\hbox{
\begin{xy}
*\xybox{(0,0);<3mm,0mm>:<0mm,3mm>::
  ,0
  ,{\xylattice{-8}{0}{-2}{0}}}="S"
,{(0,0)*{\bullet}},    {(0, 2)*{^{0}}}  
,{(-4,-2)*{\bullet}},  {(-5, -4)*{^1}}
,{(1,-5)*{\bullet}},   {(1,-8)*{^{2}}} 
,{(15, -4)*{\bullet}}, {(17,-5)*{^3}}
,{(-4,-2)\ar@{->}(0,0)}, 
,{(15,-4)\ar@{->} (-4,-2)}, {(15, -4) \ar@{->} (0,0)}
,{(1,-5)\ar@{->}(0,0)} 
, {(1,-5)\ar@{<-}(15,-4)},{(1,-5)\ar@{->}(-4,-2)},
\end{xy}} \mspace{-100.0mu}
 \begin{split} &\text{In this picture, $\eta_3=s_0 \circ s_0
     (d_0^1\circ d_2^2(\eta_2))  $ is the} \\ &\text{degenerate face
corresponding to the point $0\mathrel{(=}1=2)$. }\end{split}
\]
More precisely, any $(\eta_0, \eta_2)\in
G_1\times_{\bs_G, G_0, \bt_G} G_1$ fits in the above picture.  We
define $\eta_0\cdot \eta_2 = m_1(\eta_0, \eta_2, \eta_3)$. Then the associativity of the
$3$-multiplications ensures the associativity of ``$\cdot$''. The
inverse  is also given by $3$-multiplications: $\eta_2^{-1} =
m_0(\eta_1, \eta_2, \eta_3)$ with $\eta_1=s_0^1(d^2_1(\eta_2))$ the degenerate face in
$s^1_0(X_1)$. It is clear from the construction that all the structure maps are morphisms in
$\cC$. 
\end{proof}
\begin{remark}\label{rk:varphi}
A similar construction shows that $\tG_1\rra G_0$, with $\bt=d^1_2$ and
$\bs=d^1_1$, is a groupoid object in $(\cC, \cT'')$ isomorphic to $G_1\rra G_0$ via the
map $\varphi^{-1}$ (see equation \eqref{eq:g-tg}).
\end{remark}

\paragraph{Proof that $\cG \rra M$ is a stacky groupoid object in
  $(\cC, \cT, \cT')$}
\subparagraph{Source and target maps} There are
three maps $d^2_i: X_2\to X_1=G_0$ and they (as the moment maps of
the action) each correspond to a groupoid action. The
actions are similarly given by the $3$-multiplications as the
multiplication of $G_1$. The axioms of the actions are given by
the associativity. For example, for $d^2_1$, any $(\eta_0, \eta_2)
\in X_2 \times_{d^2_1, X_1, \bt_G}G_1$ fits inside the following picture:
\begin{equation} \label{pic:1-0}
\mspace{-100.00mu} \raise.4cm\hbox{
\begin{xy}
*\xybox{(0,0);<3mm,0mm>:<0mm,3mm>::
  ,0
  ,{\xylattice{-10}{0}{-4}{0}}}="S"
,{(0,0)*{\bullet}}, {(0, 2)*{^{0}}}
,{(0,-4)*{\bullet}},     {(0,-7)*{^{1}}}
,{(10, -10)*{\bullet}} , {(10, -12)*{_{2}}}
,{(15, -4)*{\bullet}},{(17,-5)*{^3}}
,{ (10, -10) \ar@{->} (0,0)}
,{(15, -4)\ar@{->} (10, -10)}
,{(15, -4)\ar@{->}(0, 0)}
,{(0,-4)\ar@{->}(0,0)}
,{(10,-10)\ar@{->}(0,-4)}
,{(15,-4)\ar@{->} (0,-4)}
\end{xy}} \mspace{-150.0mu}
 \begin{split} &\text{In this picture, $1\to 0$ is a degenerate edge} \\ &\text{and
$\eta_3=s_0\circ d_2^2(\eta_2)$ is a
degenerate face. }
\end{split}
\end{equation}
 Then
\begin{equation}\label{eq:g1-r-action}
\eta_0 \cdot \eta_2 := m_1(\eta_0, \eta_2, s_0d_2^2(\eta_0)).
\end{equation}
Moreover, notice that the four ways to compose source, target and
face maps $G_1 \underset{\bt_G}{ \overset{\bs_G}{\rra}} G_0
\underset{d^1_1}{\overset{d^1_0}{\rra }}X_0$ only give two
different maps: $d^1_0\bs_G$ and $d^1_1 \bt_G$. They are
surjective projections since the $d^1_i$'s, $\bs_G$ and $\bt_G$ are such,
and they give the source and target maps $\bbs, \bbt: \cG\rra X_0$
where $\cG$ is the presentable stack presented by $G_1\rra
G_0$. Therefore $\bbs$ and $\bbt$ are also surjective projections
(similarly to Lemma 4.2 in \cite{tz}). We use these two maps to form the
product groupoid
\begin{equation}\label{eq:gg}
G_1\times_{d^1_0\bs_G, X_0, d^1_1 \bt_G} G_1
\rra  G_0\times_{d^1_0,X_0,d^1_1} G_0
\end{equation}
which presents the stack
$\cG\times_{\bbs, X_0, \bbt} \cG$.

\subparagraph{Multiplication}
\begin{lemma}\label{lemma:mul}$(X_2, d^2_2\times d^2_0, d^2_1)$ is an
  H.S. bimodule from the product groupoid \eqref{eq:gg} to $G_1 \rra
G_0$.
\end{lemma}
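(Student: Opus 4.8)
The plan is to realize $X_2$, equipped with the left moment map $J_l=d^2_2\times d^2_0$ and the right moment map $J_r=d^2_1$, as a space carrying commuting left $K$- and right $G$-actions (where $K$ is the product groupoid \eqref{eq:gg}), and then to verify that the right $G$-action is principal with $J_l$ as its quotient map; these are precisely the data defining an H.S. bimodule from $K$ to $G$. All of the relevant actions are instances of the three $d^2_i$-actions introduced in the construction of the source and target maps: the right $G$-action is the one recorded in \eqref{eq:g1-r-action}, namely $\eta\cdot\gamma=m_1(\eta,\gamma,s_0 d^2_2(\eta))$ with $J_r=d^2_1$ matched against $\bt_G=d^2_0$ on $G_1$, while the left action of $K=G\times_{X_0}G$ is the simultaneous action of its two factors on the two input edges, the first copy acting through the $d^2_2$-action and the second through the $d^2_0$-action, each defined by the appropriate $3$-multiplication exactly as in \eqref{eq:g1-r-action}. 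The simplicial identity $d^1_0 d^2_2=d^1_1 d^2_0$ ensures that $J_l$ lands in $K_0=G_0\times_{d^1_0,X_0,d^1_1}G_0=\Lambda(X)_{2,1}$.

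Next I would verify the action axioms. Associativity and unitality of each individual $d^2_i$-action, the commutativity of the two factors of the left action, and the commutativity of the full left $K$-action with the right $G$-action all reduce to the associativity of the $m_i$'s together with the degeneracy relations \eqref{coco}, both of which hold by Prop-Def.\ \ref{def:finite-2gpd}. Concretely, each such identity asserts the equality of two iterated $3$-multiplications evaluated on a configuration of $2$-faces of a $4$-simplex, and the pentagon (cube) associativity built into Prop-Def.\ \ref{def:finite-2gpd} is precisely what forces the two evaluations to agree; the unit laws are read off the degeneracy identities \eqref{coco}.

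The heart of the argument — and the step I expect to be the main obstacle — is the principality of the right $G$-action, i.e.\ that $J_l\colon X_2\to K_0$ is a principal $G$-bundle. That $J_l$ is a surjective projection is immediate, since it is the $2$-Kan structure map $X_2\to\Lambda(X)_{2,1}$, which is a cover by item \ref{itm:dd} of Prop-Def.\ \ref{def:finite-2gpd}. For freeness and properness I would show that the shear map
\[
X_2\times_{d^2_1,G_0,\bt_G}G_1\longrightarrow X_2\times_{K_0}X_2,\qquad (\eta,\gamma)\mapsto(\eta,\eta\cdot\gamma),
\]
is an isomorphism in $\cC$. Given $\eta,\eta'\in X_2$ with $J_l(\eta)=J_l(\eta')$, that is $d^2_2\eta=d^2_2\eta'$ and $d^2_0\eta=d^2_0\eta'$, the relation $\eta'=\eta\cdot\gamma=m_1(\eta,\gamma,s_0 d^2_2\eta)$ says exactly that $\eta,\eta',\gamma,s_0 d^2_2\eta$ are the faces $d_0,d_1,d_2,d_3$ of one $3$-simplex. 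The three faces $\eta=d_0$, $\eta'=d_1$ and $s_0 d^2_2\eta=d_3$ agree along their shared edges — this is the content of $J_l(\eta)=J_l(\eta')$ together with the simplicial identities — so they assemble into an element of the horn $\Lambda(X)_{3,2}$, and $m_2$ produces the unique missing face $\gamma=m_2(\eta,\eta',s_0 d^2_2\eta)$; using $d^2_2 d^2_2=d^2_2 d^2_3$ one checks its $d^2_2$-edge is degenerate, so $\gamma\in G_1$. This $\gamma$ is the unique bigon with $\eta\cdot\gamma=\eta'$, and $(\eta,\eta')\mapsto(\eta,\gamma)$ is the inverse shear. Since $m_2$ is a morphism in $\cC$ and the horn spaces involved are representable (as established in Prop-Def.\ \ref{def:finite-2gpd}), this inverse is a morphism in $\cC$; equivalently, the unique solvability is just the isomorphism $\Lambda(X)_{3,1}\cong\Lambda(X)_{3,2}$ of item \ref{itm:m-iso} read between the two presentations of $X_3$. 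This establishes that the right $G$-action is free and proper, completing the proof that $(X_2,d^2_2\times d^2_0,d^2_1)$ is an H.S. bimodule from $K$ to $G$. The only real difficulty throughout is the careful bookkeeping of faces so that each bimodule axiom lands on an instance of $3$-associativity; the representability subtleties have already been dealt with in the nerve construction of Prop-Def.\ \ref{def:finite-2gpd}.
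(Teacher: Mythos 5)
Your proof is correct and takes essentially the same route as the paper's: the left moment map $J_l=d^2_2\times d^2_0$ is a surjective projection by $\Kan(2,1)$, and principality of the right $G_1$-action comes from the $3$-multiplications ($m_2$ applied to the horn with the degenerate face $s_0 d^2_2\eta$ gives transitivity, and the unique solvability coming from $\Kan!(3,j)$, i.e.\ the isomorphisms $\L(X)_{3,i}\cong\L(X)_{3,j}$, gives freeness). The additional axioms you verify for the left action of the product groupoid are dealt with in the paper in the paragraph preceding the lemma, where those actions are constructed.
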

\begin{proof}
By $\Kan(2,1)$, $d^2_2\times d^2_1$ is a surjective projection from
$X_2$ to $ G_0\times_{d^1_0,X_0,d^1_1} G_0$, so we only have to
show that the right action of $G_1\rra G_0$ on $X_2$ is free and
transitive. This is implied by $\Kan(3,j)$ and $\Kan(3,j)!$
respectively. \\
\emph {Transitivity}: any $(\eta_1, \eta_0)$ such that
$d^2_0(\eta_1)=d^2_0(\eta_0)$ and $d^2_2(\eta_0)=d^2_2(\eta_1)$
fits inside picture \eqref{pic:1-0}.
Then there exists $\eta_2:=m_2(\eta_0, \eta_1, \eta_3) \in G_1$, making $\eta_0\cdot \eta_2 =\eta_1$. \\
\emph {Freeness}: if $(\eta_0, \eta_2) \in X_2 \times_{d_1, X_1,
\bt_G}G_1$ satisfies $\eta_0\cdot \eta_2 \mathrel{(=}m_1(\eta_0, \eta_2,
\eta_3))  = \eta_0$, then $\eta_2=m_2(\eta_0, \eta_0, \eta_3)$,
and $\eta_3$ is degenerate. Thus by \ref{itm:m-iso} in Prop-Def.\ \ref{def:finite-2gpd}, $m_2 (\eta_0, \eta_0, \eta_3)$ =
$s^1_0(3\to 1)$ is a degenerate face. Therefore $\eta_2=1$.
\end{proof}

Therefore $X_2$ gives a morphism $m: \cG\times_{X_0}\cG \to \cG$.

\begin{lemma}With the source and target maps constructed above, $m$ is a multiplication of $\cG\rra X_0$.
\end{lemma}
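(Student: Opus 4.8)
The plan is to verify, for the morphism $m$ presented by the bimodule $(X_2, d^2_2\times d^2_0, d^2_1)$ of Lemma~\ref{lemma:mul}, the three conditions that Definition~\ref{def:sliegpd} demands of a multiplication: item~\ref{itm:m1} (compatibility with $\bbs$ and $\bbt$), item~\ref{itm:m-a} (associativity up to a $2$-morphism $a$), and item~\ref{itm:a-higher} (the cube/pentagon coherence for $a$). The whole argument runs dual to the construction in Section~\ref{sec:slie-2}: there the $3$-multiplication $m_0$ was manufactured out of the associativity $2$-morphism $a$ via \eqref{eq:a}, and here I read the same correspondence backwards, producing $a$ out of $m_0$ (and its invertibility out of $m_1$).

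First I would dispose of item~\ref{itm:m1}. Since on the chart $G_0=X_1$ the maps $\bbt$ and $\bbs$ reduce to $d^1_1$ and $d^1_0$, while the left and right moment maps of $X_2$ are $d^2_2\times d^2_0$ and $d^2_1$, the identities $\bbt\circ m=\bbt\circ\pr_1$ and $\bbs\circ m=\bbs\circ\pr_2$ become the simplicial relations $d^1_1 d^2_1=d^1_1 d^2_2$ and $d^1_0 d^2_1=d^1_0 d^2_0$ among the face maps of $X$ (both sides being the vertex-$0$, resp.\ the vertex-$2$, map on $X_2$). These hold by \eqref{eq:face-degen}, so this step is purely formal.

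Next, to build $a$, I would describe the two bimodules presenting $m\circ(m\times\id)$ and $m\circ(\id\times m)$ exactly as in \eqref{eq:a}, but with $E_m$ replaced by $X_2$ and $G_1$ the groupoid of bigons; call them $L$ and $R$. A point of the total space $\tilde L$ lying over the $G_1\times_M G_1$-quotient $L$ is a pair of $2$-faces glued into the $3$- and $1$-faces of a $3$-simplex, and $\tilde R$ carries instead the $0$- and $2$-faces; thus a configuration $(\eta_1,\eta_2,\eta_3)\in\Lambda(X)_{3,0}$ with $\eta_0:=m_0(\eta_1,\eta_2,\eta_3)$ furnishes the correspondence $[(\eta_3,1,\eta_1)]\mapsto[(1,\eta_0,\eta_2)]$. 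I would then check, just as for the isomorphism $\phi$ in Section~\ref{sec:slie-2}, that this map descends to the quotients by $G_1\times_M G_1$, is equivariant, and commutes with the residual moment maps, so that it defines an isomorphism of bimodules $a\colon L\to R$, i.e.\ the required $2$-morphism; the mutual compatibility of the $m_i$ (item~\ref{itm:m-iso} of Prop-Def.~\ref{def:finite-2gpd}) shows the inverse correspondence is presented by $m_1$, so $a$ is genuinely invertible.

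Finally, for the coherence item~\ref{itm:a-higher}, I would translate the cube into a statement about a $4$-simplex. Feeding the element $(\eta_{034},(\eta_{023},1),(\eta_{012},1,1))$ through the two sides of the cube and tracking the $2$-faces exactly as in the ``Associativity'' subparagraph of Section~\ref{sec:slie-2} (equation~\eqref{eq:5gon-g}), the cube equation $(a_6\times\id)\circ(\id\times a_2)\circ(a_1\times\id)=(\id\times a_5)\circ(a_4\times\id)\circ(\id\times a_3)$ becomes precisely the ``two ways to compute $\eta_{123}$'' identity of Prop-Def.~\ref{def:finite-2gpd}, namely $m_0(\eta_{023},\eta_{013},\eta_{012})$ agreeing with $m_3(\eta_{234},\eta_{134},\eta_{124})$ inside the $5$-gon \eqref{pic:5-gon}; this is part of the hypothesis that $X$ is a $2$-groupoid. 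I expect the main obstacle to be the middle step: verifying that $[(\eta_3,1,\eta_1)]\mapsto[(1,\eta_0,\eta_2)]$ is well defined and equivariant as a map of bimodule quotients, since this requires the same careful fibre-product and principality bookkeeping as Lemma~\ref{lemma:kan22} and the definition of $m_0$ in Section~\ref{sec:slie-2}, now carried out for the abstract groupoid $G_1\rra G_0$ rather than for $E_m$. Once that isomorphism is in place, items~\ref{itm:m1} and~\ref{itm:a-higher} are respectively formal and a direct re-reading of the associativity already established for the $m_i$.
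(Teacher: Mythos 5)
Your proposal takes essentially the same route as the paper: item \ref{itm:m1} is formal, the $2$-morphism $a$ is obtained by reversing the construction of Section \ref{sec:slie-2} via $[(\eta_3,1,\eta_1)]\mapsto[(1,m_0(\eta_1,\eta_2,\eta_3),\eta_2)]$, and the cube coherence is the associativity of the $m_i$'s read backwards. The well-definedness of $a$ that you flag as the main obstacle is indeed where the paper concentrates its effort; it is settled not only by principality bookkeeping but by invoking $\Kan(3,0)$ to compare two choices of the auxiliary face $\eta_2$ and then using the associativity of the $3$-multiplications, together with a $\cT$-local-section argument to see that $a$ is a morphism in $\cC$.
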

\begin{proof} By construction, it is clear that $\bbt \circ m =
\bbt\circ \pr_1$ and $\bbs \circ m = \bbs \circ \pr_2$, where $\pr_i:
\cG\times_{\bbs, X_0, \bbt} \cG \to \cG$ is the $i$-th projection (see
the picture below).
\[ 
\begin{xy}
  *\xybox{(0,0);<3mm,0mm>:<0mm,3mm>::
  ,0
  ,{\xylattice{-5}{0}{-4}{0}}}="S"
  ,{(-10,-10)*{\circ}}, {(-12, -12)*{_1}}
     ,{(0,0)*{\bullet}}, {(0, 2)*{^{0= \bbt m = \bbt \pr_1}}}, {(10, -10)*{\bullet}},
     {(15, -12)*{_{2= \bbs m = \bbs \pr_2}}},
     {(-10, -10) \ar@{->} (0,0)}, { (10, -10) \ar@{->} (-10,-10)} , { (10, -10) \ar@{->} (0,0)}
\end{xy}
\]
To show the associativity, we reverse the argument in Section
\ref{sec:slie-2}. There, we used the $2$-morphism $a$ to
construct the $3$-multiplications. Now we use the $3$-multiplications
and their associativity to construct $a$. Given the two H.S.
bibundles presenting $m\circ (m\times \id)$ and $m\circ (\id\times
m)$ respectively, we want to construct a map $a$ as in
\eqref{eq:a}, where $E_m=X_2$ and $M=X_0$. Given any element in
$(X_2\times_{G_0} G_1) \times_{G_0\times_{X_0} G_0}X_2
/G_1\times_{X_0} G_1$, as in Section \ref{sec:slie-2}, the idea is that we can
write it in the form of $[(\eta_3, 1,\eta_1 )]$, with $(\eta_1,
\eta_2, \eta_3)\in \hom(\Lambda[3, 0], X)$ for some $\eta_2$. Then
we define
\[a([(\eta_3, 1, \eta_1 )]):= [(1,\eta_0:= m_0(\eta_1, \eta_2,
\eta_3), \eta_2 )].\] As before, we need to strictify the proof via
diagram chasing. Similarly to \eqref{eq:m-a}, we have
\begin{equation} \label{eq:a-m}
\xymatrix{
\hom(\Lambda[3,0], X) \ar[rrrr]^{(\eta_1, \eta_2,
  \eta_3)\mapsto (\eta_0, \eta_2)} \ar[d]_{(\eta_1, \eta_2,
  \eta_3)\mapsto [(\eta_1, \eta_3)]} & & & & \tilde{R} \ar[r]  \ar@{->>}[d] & E_m \ar@{->>}[d]^{J_l }\\
L\ar@{.>}[rrrr]^{a} & & & &R \ar[r] & G_0 \times_M G_0
}
\end{equation}
Hence we should show that the definition of $a$ does not depend on the
choice of $\eta_1$, $\eta_3$ and $\eta_2$ set-theoretically, and then $a$ is a morphism in
$\cC$. We first show the first statement.
First of all  (see the picture below),
\[
\raise.7cm\hbox{
\begin{xy}
*\xybox{(0,0);<3mm,0mm>:<0mm,3mm>::
  ,0
  ,{\xylattice{-5}{0}{-4}{0}}}="S"
  ,{(-10,-10)*{\bullet}}, {(-10, -12)*{_1}},{(-8,
  -14)*{\bullet}},  {(-8,
  -16)*{_{1'<1}}},
     ,{(0,0)*{\bullet}}, {(0, 2)*{^{0}}}, {(10, -10)*{\bullet}},
     {(10, -12)*{^{2}}}, {(15, -4)*{\bullet}}, {(17,-5)*{^3}},
     {(-10, -10) \ar@{->} (0,0)}, {(-10, -10) \ar@{.>} (-8,-14)},{ (10, -10) \ar@{->} (-10,-10)},
      { (10, -10) \ar@{.>} (-8,-14)} , { (10, -10) \ar@{->}
     (0,0)}, {(15, -4)\ar@{->}(10, -10)}, {(15, -4)
     \ar@{.>}(-8,-14)},{(15, -4)\ar@{->}(0, 0)},{(15, -4)\ar@{->}(-10,
     -10)}, {(-8,-14)\ar@{.>} (0,0)}
\end{xy}}
\mspace{-100.0mu}
\begin{split} &\text{In this picture, $1\to 1'$ is a degenerate edge} \\ &\text{and
$\eta_{01'1}, \eta_{1'12}$ are
degenerate faces. }  
\end{split}
\]
if we choose a different $\teta_2$, since $(\eta_1,
\eta_2, \eta_3) $ and $(\eta_1, \teta_2, \eta_3) $ are both in
$\hom(\Lambda[3,0], X)$, we have $d^2_2(\eta_2)=d^2_2(\teta_2)$
and $d^2_1(\eta_2)=d^2_1(\teta_2)$. So $\eta_2=\eta_{013}$ and
$\teta_2=\eta_{0 1' 3}$ form a degenerate horn.  By $\Kan(3,0)$ there exists
$\gamma=\gamma_{1'13}\in G_1$ such that $(1, \gamma)\cdot
\teta_2(=\gamma \cdot \teta_2)=\eta_2$, that is
$\eta_{013}=m_1(\gamma, \eta_{01'3}, s^1_1(0\to 1))$. Then by
associativity and the definition of the right $G_1$ action
\eqref{eq:g1-r-action}, we have $m_0(\eta_{023}, \eta_{01'3},
\eta_{01'2}) = \eta_{1'23}=\eta_{123} \cdot \gamma$. Therefore we
have $[(1, \eta_{1'23}, \teta_2)]=[(1, \eta_{123}, \eta_2)]$. So
the choice of $\eta_2$ will not affect the definition of $a$. Secondly  (see the
following picture),
\[
\raise.7cm\hbox{
\begin{xy}
*\xybox{(0,0);<3mm,0mm>:<0mm,3mm>::
  ,0
  ,{\xylattice{-5}{0}{-4}{0}}}="S"
  ,{(-10,-10)*{\bullet}}, {(-10, -12)*{_1}},
     ,{(0,0)*{\bullet}}, {(0, 2)*{^{0}}}
,{(1,-7)*{^{0'>0}}}
,{(0,-4)*{\bullet}}
, {(10, -10)*{\bullet}}
 , {(10, -12)*{^{2}}}
 ,    {(15, -4)*{\bullet}}
 ,{(17,-5)*{^3}}
 , {(-10, -10) \ar@{->} (0,0)},
     { (10, -10) \ar@{->}(-10,-10)},
      { (10, -10) \ar@{->} (0,0)},
     {(15, -4)\ar@{->} (10, -10)},
     {(15, -4)\ar@{->}(0, 0)},
     {(15, -4)\ar@{->}(-10,-10)},
{(0,-4)\ar@{.>}(0,0)},{(-10,-10)\ar@{.>}(0,-4)},{(10,-10)\ar@{.>}(0,-4)},{(15,-4)\ar@{.>} (0,-4)}
\end{xy} }\mspace{-100.0mu}
\begin{split} &\text{In this picture, $0'\to 0$ is a degenerate edge} \\ &\text{and
$\eta_{00'1}, \eta_{00'3}$ are
degenerate faces.}  \end{split}
\] if
we choose a different $(\teta_3=\eta_{0'12},1,
\teta_1=\eta_{0'23})$, such that 
$\eta_3=\teta_3 \cdot \gamma_{00'2}$ and
$\teta_1=(\gamma_{00'2},1)\cdot\eta_1=\gamma_{00'2}\cdot \eta_1$ for a
$\gamma_{00'2} \in G_1$, then by
associativity  we have $(\teta_1, \eta_2,
\teta_3) \in \hom(\Lambda[3,0], X)$ and
\[m_0(\teta_1, \eta_2, \teta_3) =\eta_{123}=m_0(\eta_1,\eta_2,\eta_3).\]
So this choice will not affect $a$ either. 

In all our cases, for a set-theoretical map to be a morphism in $\cC$, we only have to
verify it $\cT$-locally. Luckily, our surjective projections do have
$\cT$-local sections and  $\hom(\Lambda[3,0], X) \to L $, being a
composition of two surjective projections $\hom(\Lambda[3,0], X)=
\tilde{L} \times_{G_0 \times_M G_0} E_m \to \tilde{L} $ and $\tilde{L} \to L$, is
a surjective projection.

Now the higher coherence of $a$ follows from the associativity by
the same argument as in Section \ref{sec:3-asso}.
\end{proof}

\subparagraph{Identity} Now we notice that $s_0: X_0
\hookrightarrow G_0$ and $e_G
\circ s_0: X_0 \hookrightarrow G_1$, with $e_G$  the identity of $G$, form a groupoid morphism from
$X_0\rra X_0$ to $G_1 \rra G_0$. This gives a morphism $\bar{e}:
X_0\to \cG$ on the level of stacks.
\begin{lemma} $\bar{e}$ is the identity of $\cG$.
\end{lemma}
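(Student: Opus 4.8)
The plan is to verify the identity axioms 4(a)--(f) of Definition \ref{def:sliegpd} for $\bar{e}$, by producing the structural $2$-morphisms $b_l,b_r$ out of the degeneracy maps $s^1_0,s^1_1$ of $X$ and then checking their coherences against the associator $a$ built in the previous lemma. First I would record that $\bar{e}$ is presented by the strict groupoid morphism $(s_0,\,e_G\circ s_0)\colon (X_0\rra X_0)\to(G_1\rra G_0)$, whose associated H.S. bibundle is $E_{\bar{e}}=G_1\times_{\bt_G,G_0,s_0}X_0$, exactly as in \eqref{eq:inv-construct}. Since $s_0=s^0_0$ is a section of both face maps $d^1_0,d^1_1\colon X_1\to X_0$, the relations $\bbs\circ\bar{e}=\id$ and $\bbt\circ\bar{e}=\id$ on $X_0$ are immediate; this also fixes the source/target data needed to form the composites $m\circ(\id\times\bar{e})$ and $m\circ(\bar{e}\times\id)$.

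Next I would construct $b_r$. Composing the bibundle $E_m=X_2$ with $E_{\bar{e}}$ along the \emph{second} leg $d^2_0\colon X_2\to G_0$ presents $m\circ(\id\times\bar{e})$, and I claim this composite is isomorphic to the identity bibundle $G_1$ presenting $\pr_1$. The isomorphism is furnished by the degeneracy $s^1_1\colon X_1=G_0\to X_2=E_m$: by \eqref{eq:face-degen} one has $d^2_1 s^1_1=d^2_2 s^1_1=\id$ while $d^2_0 s^1_1=e\circ d^1_0$ is degenerate, so $s^1_1(g)$ is a $2$-simplex whose second input ($d^2_0$) is a unit and whose first input ($d^2_2$) and output ($d^2_1$) both equal $g$. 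Thus $s^1_1$ identifies the restriction of $E_m$ along $\bar{e}$ on the second leg with $G_1$, giving $b_r\colon m\circ(\id\times\bar{e})\Rightarrow\pr_1$. Symmetrically, $s^1_0$, which satisfies $d^2_0 s^1_0=d^2_1 s^1_0=\id$ and $d^2_2 s^1_0=e\circ d^1_1$ degenerate, yields $b_l\colon m\circ(\bar{e}\times\id)\Rightarrow\pr_2$. This just reverses the identification $s^1_1=b_r^{-1}\circ e_G$, $s^1_0=b_l^{-1}\circ e_G$ recorded in \eqref{eq:stru-maps}.

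It then remains to check the coherences \ref{itm:b-on-M}, \ref{itm:br}, \ref{itm:bl} and \ref{itm:bl-br}. Axiom \ref{itm:b-on-M}, that $b_r$ and $b_l$ agree on $m\circ(\bar{e}\times\bar{e})\Rightarrow\bar{e}$, follows at once from the degeneracy identity $s^1_0 s^0_0=s^1_1 s^0_0$ (the case $i=j=0$ of $s^n_i s^{n-1}_j=s^n_{j+1}s^{n-1}_i$). The remaining three are precisely the statements obtained by running the ``coherence conditions'' argument of Section \ref{sec:slie-2} in reverse: there the first identity of \eqref{coco} was \emph{deduced} from axiom \ref{itm:bl}, whereas here, with $a$ now defined from $m_0$ via the isomorphism \eqref{eq:a} and with $b_r,b_l$ defined from the degeneracies, I would instead \emph{use} the compatibility equations \eqref{coco} of the $3$-multiplications with faces and degeneracies, together with their associativity, to conclude that each composite $2$-morphism appearing in \ref{itm:br}, \ref{itm:bl} and \ref{itm:bl-br} is the identity. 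Concretely, one traces representatives such as $[(s_0 d_2(\eta),1,\eta)]$ through the bibundle quotients defining $a$ and applies the relevant line of \eqref{coco}.

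The main obstacle will be the bookkeeping in this last step: matching the associator $a$, which is built from the multiplication bibundle through the quotient description \eqref{eq:a}, against $b_r,b_l$, which are built from single degeneracies, and showing that the resulting triple composites of $2$-morphisms collapse to the identity. As in Section \ref{sec:3-asso}, this is a finite diagram chase that reduces to \eqref{coco} and to associativity of the $m_i$; the only subtlety beyond the set-theoretic statement is representability in $\cC$, but since all the projections involved admit $\cT$-local sections, each set-theoretic identity lifts to a genuine identity of morphisms in $\cC$.
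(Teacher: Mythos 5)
Your proposal is correct and follows essentially the same route as the paper: present $\bar{e}$ by the strict morphism $(s_0,\,e_G\circ s_0)$, identify the restricted multiplication bibundles $d_2^{-1}(s_0(X_0))=G_1$ and $d_0^{-1}(s_0(X_0))=\tG_1$ with the identity bibundle to obtain $b_l$ and $b_r$, and verify the coherences by diagram chases that reduce to \eqref{coco} and to associativity of the $m_i$. The only differences are organizational: the paper notes that $b_l$ is literally the identity and takes $b_r=\varphi^{-1}$ (the isomorphism $\tG_1\cong G_1$ built from the $3$-multiplications, which agrees with your trivialization by the section $s^1_1$ of the principal bundle $\tG_1\to G_0$), and it invokes Remark \ref{rk:slgpd} to dismiss items \ref{itm:br} and \ref{itm:bl} as redundant, so that only item \ref{itm:bl-br} needs the explicit $4$-simplex argument you describe.
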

\begin{proof} Recall from Def.\ \ref{def:sliegpd} that we need to show that there is a $2$-morphism $b_l$
between the two maps $m\circ (\bar{e}\times \id)$ and $\pr_2: X_0
\times_{X_0, \bbt} \cG \to \cG$, and similarly a $2$-morphism $b_r$.
In our case, the H.S. bibundles presenting these two maps are
$X_2|_{X_0\times_{X_0, \bt} G_0}$ and $G_1$ respectively and they are
the same by construction, hence $b_l=\id$. For $b_r$, by Remark
\ref{rk:varphi}, we have $X_2|_{G_0\times_{\bs, X_0}X_0} = \tG_1$, so the isomorphism $
\varphi^{-1}: \tG_1 \to G_1$ is $b_r$. Item \ref{itm:b-on-M} is
implied by $s^1_0\circ s^0_0=s^1_1\circ s^0_0$.

By Remark \ref{rk:slgpd}, we only need to show item
\ref{itm:bl-br}. Translating it into the language of groupoids and bibundles, we
obtain
\[\xymatrix{
G_1\times_{X_0}X_0\times_{X_0}G_1 \ar[dd] \ar@<-1ex>[dd] & &
G_1\times_{X_0}G_1 \ar[dd] \ar@<-1ex>[dd] & & G_1 \ar[dd]
\ar@<-1ex>[dd]
\\
& \tG_1\times_{X_0}G_1 \ar[dl]\ar[dr] & & X_2 \ar[dl]\ar[dr] & \\
G_0\times_{X_0}X_0\times_{X_0}G_0 & & G_0\times_{X_0} G_0 & & G_0
\\
& G_1 \times_{X_0} G_1 \ar[ul]
\ar[ur] & & X_2 \ar[ul]
\ar[ur]\\
}
\]
Corresponding to item \ref{itm:bl-br}, we need to show that the following
diagrams commute:
$$ \xymatrix{ (\tG_1 \times_{X_0} G_1)
\mathop\times\limits_{G_0\times_{X_0} G_0}
X_2/{\sim} \ar[r]^a \ar[d]_{b_r} & (G_1\times_{X_0}
G_1 )\mathop\times\limits_{G_0\times_{X_0} G_0}
X_2 /{\sim} \ar[dl]^{b_l} \\
(G_1\times_{X_0} G_1
)\mathop\times\limits_{G_0\times_{X_0}G_0}X_2/{\sim} } $$
\[
 \xymatrix{ [(\eta_3=\eta_{012}, 1, \eta_1=\eta_{023})]
\ar@{|->}[r] \ar@{|->}[d]& [(1,  \eta_{123}=\eta_0, \eta_{013}=\eta_2)]
\ar@{|->}[dl]_{id?} \\
 [(\eta_{00'1}, 1, \eta_1 )] }.\]
Let us explain the diagram: An element $[(\eta_3, 1, \eta_1)]
\in(\tG_1 \times_{X_0} G_1) \times_{G_0\times_{X_0} G_0}
X_2/{\sim}$
fits inside picture \eqref{pic:br}\footnote{Now to avoid confusion, we call a face by its three vertices, for example now
$\eta_1=\eta_{023}$.}, 
and its image under $a$ is $[(1,
\eta_{123}, \eta_{013})]$.
\begin{equation}\label{pic:br}
\raise.7cm\hbox{
\begin{xy}
*\xybox{(0,0);<3mm,0mm>:<0mm,3mm>::
  ,0
  ,{\xylattice{-5}{0}{-4}{0}}}="S"
, {(5, 0)*{\bullet}},   {(7,2)*{^{0'>0}}}
     ,{(0,0)*{\bullet}}, {(0, 2)*{^{0}}}
,{(-8,  -14)*{\bullet}},  {(-8, -16)*{_{2}}}
, {(-10, -10) \ar@{<-} (-8,-14)}
, { (10, -10) \ar@{->} (-8,-14)} 
, {(-8,-14)\ar@{->} (0,0)}
,{(-10,-10)*{\bullet}}, {(-11, -12)*{_1}}
, {(10, -10)*{\bullet}},     {(10, -12)*{_{3}}}
, {(-10, -10) \ar@{->} (0,0)}, {(-10, -10) \ar@{->} (5,0)},{ (10, -10) \ar@{->} (-10,-10)} , { (10, -10) \ar@{->} (5,0)} , { (10, -10) \ar@{->}
     (0,0)}, {(5,0)\ar@{->} (0,0)}
\end{xy}}\mspace{-100.0mu}
\begin{split} &\text{In this picture, $0'\to 0, 2\to 1$ are
    degenerate edges} \\ &\text{and all the faces containing one of
    them} \\& \text{are degenerate except for 
$\eta_{00'1}, \eta_{012}$ and $\eta_{123}$.}  \end{split}
\end{equation} By the construction of $b_r$,
$\eta_{00'1}=b_r(\eta_{3})$. We only need to show that $\eta_{00'1}
\cdot \eta_2 = \eta_{0}\cdot \eta_1$. This is implied by the
following: We consider the 3-simplices
$(0, 0', 2, 3)$, $(0', 1, 2, 3)$ and $(0, 0', 1, 3)$, we have $\eta_1=\eta_{0'23}$, $\eta_0 \cdot
\eta_{0'23} =\eta_{0'13}$ and $\eta_{00'1} \cdot \eta_2=\eta_{0'13}$
accordingly.  

\end{proof}

\subparagraph{Inverse} By Prop.\ \ref{prop:inverse}, we only have
to show that the actions of $G$ and $G^{\op}$ on $X_2
\times_{d^2_1, G_0} M$, induced respectively by the first and
second components of the left action of $G_1 \times_M G_1 \rra
G_0\times_M G_0$, are principal (see \eqref{eq:inv-construct}). We
prove this for the first copy of $G_1$; the proof for the
second is similar. An element $(\eta_3, \eta_{1}, s_0 \circ s_0
     (d_1^1\circ d_1^2(\eta_1))) \in G_1 \times_{\bs_G,
G_0,d^2_2} X_2 \times_{d^2_1, G_0} M $ fits inside
the following picture:
\[
 \raise.4cm\hbox{
\begin{xy}
*\xybox{(0,0);<3mm,0mm>:<0mm,3mm>::
  ,0
  ,{\xylattice{-8}{0}{-2}{0}}}="S"
,{(0,0)*{\bullet}},    {(0, 2)*{^{0}}}  
,{(-4,-2)*{\bullet}},  {(-5, -4)*{^1}}
,{(15, -4)*{\bullet}}, {(17,-5)*{^2}}
,{(1,-5)*{\bullet}},   {(1,-8)*{^{3}}} 
,{(-4,-2)\ar@{->}(0,0)}, 
,{(15,-4)\ar@{->} (-4,-2)}, {(15, -4) \ar@{->} (0,0)}
,{(1,-5)\ar@{->}(0,0)} 
, {(1,-5)\ar@{->}(15,-4)},{(1,-5)\ar@{->}(-4,-2)},
\end{xy}} \mspace{-100.0mu}
 \begin{split} &\text{In this picture, $\eta_2=s_0 \circ s_0
     (d_1^1\circ d_1^2(\eta_1))  $ is the} \\ &\text{degenerate face
corresponding to the point $0\mathrel{(=}1=3)$. }\end{split}
\]
Then the freeness of the action is implied by
$\Kan!(3, 0)$, and the transitivity of the action is implied by
$\Kan(3, 0)$.

\subparagraph{Comments on the \'etale condition} If the $X_2 \to
\Lambda[2,j](X)$ are \'etale maps between smooth manifolds, then the moment map $J_l:
E_m\cong X_2 \to G_0\times_{\bs,M, \bt} G_0$ is \'etale. By
principality of the right action of the Lie groupoid $G_1 \rra G_0$, it is an
\'etale groupoid. This concludes the proof of the Theorem
\ref{2-to-slie}.

\subsection{One-to-one correspondence}

In this section, we use two lemmas to prove the following theorem:
\begin{thm}\label{thm:1-1} There is a $1$--$1$ correspondence between
 $2$-groupoid objects in $(\cC, \cT'')$
 modulo $1$-Morita
equivalence and those stacky groupoid objects in $(\cC, \cT, \cT') $ whose
identity maps have good charts.
\end{thm}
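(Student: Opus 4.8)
The plan is to show that the two constructions of the preceding subsections are mutually inverse once the $2$-groupoid side is taken modulo $1$-Morita equivalence. Write $\Phi$ for the assignment of Theorem \ref{thm:slie-2}, sending a stacky groupoid $\cG\rra M$ together with a good chart $G_0$ to a $2$-groupoid $\Phi(\cG,G_0)$, and $\Psi$ for the assignment of Theorem \ref{2-to-slie}, sending a $2$-groupoid $X$ to the stacky groupoid $\Psi(X)$ presented by $G_1\rra G_0=X_1$. Since every identity map admits a good chart (Lemmas \ref{lemma:embedding} and \ref{lemma:embedding-non-etale}), the content of the theorem is that $\Phi$ descends to a well-defined bijection $\{\text{stacky groupoids}\}/{\cong}\;\longleftrightarrow\;\{2\text{-groupoids}\}/{(1\text{-Morita})}$ with inverse induced by $\Psi$. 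I would isolate the two nontrivial points as two lemmas: that $\Phi(\cG,G_0)$ is independent of the good chart up to $1$-Morita equivalence, and that a $1$-Morita equivalence of $2$-groupoids induces an isomorphism of the associated stacky groupoids.

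First I would dispose of the compositions, which follow almost directly from the explicit constructions. Given a $2$-groupoid $X$, the multiplication bibundle of $\Psi(X)$ is exactly $(X_2,d^2_2\times d^2_0,d^2_1)$ by Lemma \ref{lemma:mul}; feeding this back through $\Phi$ with the chart $G_0=X_1$ reproduces $X_0=M$, $X_1=G_0$, $X_2=E_m=X_2$ together with the matching face, degeneracy and $3$-multiplication maps, so $\Phi(\Psi(X))\cong X$. Conversely, starting from $\cG\rra M$ with good chart $G_0$, the $2$-groupoid $\Phi(\cG,G_0)$ has $X_2=E_m$, and $\Psi$ reconstructs the groupoid $d_2^{-1}(s_0X_0)=J_l^{-1}(e(M)\times_M G_0)$, which is identified with $G_1$ by the structure isomorphism $b_l$; tracing the multiplication through $E_m$, the identity through $b_r$ and Remark \ref{rk:varphi}, and the inverse through Proposition \ref{prop:inverse} shows $\Psi(\Phi(\cG,G_0))\cong\cG$ as stacky groupoids.

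For the first lemma I would argue that refining a good chart along a cover $G_0'\thra G_0$ (keeping $M$ embedded) realizes $\Phi(\cG,G_0')$ as the pull-back $2$-groupoid of $\Phi(\cG,G_0)$ along $G_0'\thra G_0$, in the sense of the pull-back $2$-groupoid construction preceding Definition \ref{defi:gen-morp}: here $Z_0=M=X_0$ and $Z_1=G_0'\thra X_1=G_0$, and the forced layer $Z_2=\hom(\pD[2],Z)\times_{\hom(\pD[2],X)}X_2$ is precisely the pulled-back multiplication bibundle $E_m'$. By Remark \ref{rk:1t-m-equi} the resulting equivalence is a $1$-equivalence. Any two good charts admit a common good-chart refinement $G_0\thla G_0\times_\cG G_0'\thra G_0'$: the fibre product is representable and covers both charts because charts are representable projections, and $M$ embeds into it compatibly via the $2$-morphism relating the two presentations of $\bar{e}$, since its composite to $G_0$ is the embedding $e$. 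Composing the two $1$-equivalences with Lemmas \ref{lemma:comp-equi}, \ref{lemma:ngpd-fp} and \ref{lemma:inverse-comp-equi} then yields the required $1$-Morita equivalence.

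For the second lemma it suffices to treat a single $1$-equivalence $f:Z\to X$, since a general $1$-Morita equivalence is a span $X\thla Z\thra Y$. By Remark \ref{rk:1t-m-equi} one has $f_0:Z_0\cong X_0$ and $f_1:Z_1\thra X_1$, so the cover $Z_1\thra X_1$ of charts, together with the induced map on bigon groupoids $G_1^Z\subset Z_2\to X_2\supset G_1^X$, exhibits a Morita equivalence of the presentations; as $f$ commutes with all faces, degeneracies and $3$-multiplications, this Morita equivalence is compatible with the multiplication, identity and inverse, hence descends to an isomorphism $\cG(Z)\cong\cG(X)$ of stacky groupoids, giving $\cG(X)\cong\cG(Y)$. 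I expect the main obstacle to be the first lemma, specifically the identification of $\Phi(\cG,G_0')$ with the pull-back $2$-groupoid—matching $E_m'$ with $Z_2$ and verifying that the common refinement remains a \emph{good} chart, which leans on the chart-straightening techniques of Lemmas \ref{lemma:embedding} and \ref{lemma:embedding-non-etale}.
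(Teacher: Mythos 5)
Your proposal follows essentially the same route as the paper: the two constructions are mutually inverse on the nose for a fixed good chart, and the real content is isolated in the same two lemmas — that a $1$-\equivalence\ of $2$-groupoid objects induces an isomorphism of the associated stacky groupoids (the paper's Lemma \ref{lemma:equi-2gpd-slie}), and that two good presentations of the same stacky groupoid yield $1$-Morita equivalent $2$-groupoids (the paper's final lemma, which, like your common-refinement argument, reduces to the case of a strict groupoid equivalence between presentations). One small caution: the theorem is deliberately stated with the hypothesis ``whose identity maps have good charts'' because Lemmas \ref{lemma:embedding} and \ref{lemma:embedding-non-etale} establish the existence of good charts only in the differential category, so you should not invoke them in the general setting of $(\cC,\cT,\cT')$.
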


By Section \ref{sec:embedding}, good charts (respectively good
\'etale charts) always exist for SLie groupoids (respectively
W-groupoids), so we have
\begin{thm} 
There is a $1$--$1$ correspondence between Lie $2$-groupoids
(respectively $2$-\'etale Lie $2$-groupoids) modulo $1$-Morita
equivalence and SLie groupoids (respectively W-groupoids).
\end{thm}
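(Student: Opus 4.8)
The plan is to show that the two constructions of the previous sections are mutually inverse after passing to the appropriate equivalence relations. Write $\Phi$ for the assignment of Theorem \ref{thm:slie-2} sending a stacky groupoid $\cG\rra M$ together with a good chart $G_0$ to the $2$-groupoid $X$ with $X_0=M$, $X_1=G_0$, $X_2=E_m$, and write $\Psi$ for the assignment of Theorem \ref{2-to-slie} sending a $2$-groupoid $X$ to the stacky groupoid $\cG\rra X_0$ presented by the bigon groupoid $G_1\rra G_0=X_1$. The two lemmas promised before the statement will record the compatibility of $\Phi$ and $\Psi$ with, respectively, $1$-Morita equivalence of $2$-groupoids and isomorphism of stacky groupoids; the bijection then follows by combining them with two direct round-trip computations.

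First I would check the round trips on the nose. For $\Phi\circ\Psi$: given a $2$-groupoid $X$, the bibundle presenting the multiplication of $\Psi(X)$ is, by Lemma \ref{lemma:mul}, the space $X_2$ itself, with moment maps $J_l=d_2\times d_0$ and $J_r=d_1$. Feeding the good chart $G_0=X_1$ back into $\Phi$ therefore reproduces $X_2=E_m$, and the formulas $d^2_0=\pr_2\circ J_l$, $d^2_1=J_r$, $d^2_2=\pr_1\circ J_l$ recover exactly the original face maps; the degeneracies and the four $3$-multiplications match in the same way, so $\Phi\circ\Psi(X)\cong X$. For $\Psi\circ\Phi$: starting from $(\cG,G_0)$ and forming $X$ with $X_1=G_0$, $X_2=E_m$, the bigon groupoid $G_1'=d_2^{-1}(s_0(X_0))\subset E_m$ is precisely $J_l^{-1}(M\times_M G_0)$, which the isomorphism $b_l$ identifies with $G_1$. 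Hence $\Psi\circ\Phi(\cG,G_0)$ is presented by $G_1\rra G_0$ with multiplication again given by $X_2=E_m$, so it is isomorphic to $\cG$ as a stacky groupoid.

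Next I would prove the two lemmas. Lemma~I: a $1$-\equivalence\ $f:Z\to X$ of $2$-groupoids induces an isomorphism $\Psi(Z)\cong\Psi(X)$. By Remark \ref{rk:1t-m-equi} such an $f$ has $f_0$ an isomorphism and $f_1:Z_1\thra X_1$ a cover, so the bigon groupoids are Morita equivalent over the common base $Z_0=X_0=M$, and the level-$2$ data identify their multiplication bibundles; one then checks that the identity, the associator $a$, and (via Prop.\ \ref{prop:inverse}) the inverse are carried across, yielding an isomorphism of stacky groupoids. Consequently $1$-Morita equivalent $2$-groupoids have isomorphic associated stacky groupoids, so $\Psi$ descends. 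Lemma~II: any two good charts $G_0,G_0'$ of a fixed $\cG\rra M$ give $1$-Morita equivalent $2$-groupoids. Here I would pass to the common refinement $\hat G_0=G_0\times_{\cG}G_0'$, which is again a representable good chart (the lifts $e,e'$ of $\bar e$ patch to a lift $M\to\hat G_0$) with covers $\hat G_0\thra G_0$ and $\hat G_0\thra G_0'$. Each chart cover induces a strict map of the associated nerve $2$-groupoids that is the identity on $M$, a cover on the $X_1$-level, and an isomorphism on the $X_2$-level (the refined multiplication bibundle being the pullback of $E_m$), hence a $1$-\equivalence\ by Def.\ \ref{defi:1t-m-equi-2gpd}; the zig-zag $X\xleftarrow{\sim}\hat X\xrightarrow{\sim}X'$ is then the required $1$-Morita equivalence, and $\Phi$ descends to isomorphism classes of stacky groupoids. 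Combining the round trips with Lemmas~I and II produces mutually inverse maps between the two sets.

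The main obstacle is Lemma~II: one must confirm that a cover of good charts really produces an \equivalence\ at the top level, i.e.\ that the induced map on $X_2$ is genuinely an \emph{isomorphism}, and that it intertwines all four $3$-multiplications together with the associativity conditions of Prop-Def.\ \ref{def:finite-2gpd}, rather than being merely a cover. This is where the bibundle bookkeeping of Section \ref{sec:slie-2}---in particular the principality statements behind $E_m$ and the identifications \eqref{eq:eeg} and \eqref{eq:a}---has to be re-run for the refined chart, and it is the step most likely to conceal representability or coherence subtleties.
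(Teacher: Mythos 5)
Your proposal is correct and matches the paper's strategy: the paper likewise reduces the statement (via Theorem \ref{thm:1-1}) to two lemmas — one showing that a $1$-\equivalence\ of $2$-groupoids induces an isomorphism of the associated stacky groupoids (Lemma \ref{lemma:equi-2gpd-slie}), the other showing that a strict equivalence of good presentations induces a $1$-\equivalence\ of the associated $2$-groupoids — with the reduction to strict morphisms (your common-refinement chart $G_0\times_{\cG}G_0'$) and the on-the-nose round trips left implicit in the constructions of Sections \ref{sec:slie-2} and \ref{sec:2-slie}. Your explicit verification of the round trips and of Lemma~II is exactly what the paper's closing sentence compresses, so the two arguments are essentially the same.
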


W-groupoids are isomorphic if and only if they are isomorphic as
SLie groupoids, and $1$-Morita equivalent $2$-\'etale Lie $2$-groupoids
are $1$-Morita equivalent Lie $2$-groupoids. Therefore the \'etale
version of the theorem is implied by the general case and we only
have to prove the general case.

For the lemma below, we fix our notation: $X$ and $Y$ are
$2$-groupoid objects in $(\cC, \cT'')$ in the sense of Prop-Def.\ \ref{def:finite-2gpd};
$G_0=X_1$ and $H_0=Y_1$; $X_0=Y_0=M$. $G_1$ and $H_1$ are the
spaces of bigons in $X$ and $Y$, namely $d_2^{-1}(s_0(X_0))$ and
$d_2^{-1}(s_0(Y_0))$ respectively; both $G_1 \rra G_0$ and $H_1 \rra H_0$ are
groupoid objects, and they present presentable stacks $\cG$ and
$\cH$ respectively. Moreover $\cG \rra M$ and $\cH \rra M$ are
stacky groupoids.

\begin{lemma} \label{lemma:equi-2gpd-slie} If $f: Y\to X$ is \an \equivalence, then
\begin{enumerate}
\item \label{itm:i2s} the groupoid $H_1 \rra H_0$ constructed from $Y$ satisfies
$H_1 \cong G_1 \times_{G_0\times_M G_0} H_0\times H_0$ with the
pull-back groupoid structure (therefore $\cG\cong \cH$);
\item \label{itm:algd-morp} the above map $\bphi: \cH \cong \cG$ induces a stacky groupoid
isomorphism; that is,
\begin{enumerate}
\item \label{itm:ii2s}there are a $2$-morphism $a:\bphi \circ m_{\cH}
  \to m_{\cG} \circ (\bphi\times \bphi):
\cH\times_M\cH \to \cG$ and a
$2$-morphism $b: \bphi \circ \be_{\cH}\to \be_{\cG}: M \to \cG$;
\item \label{itm:iii2s} between maps $\cH\times_M\cH\times_M\cH \to \cG$, there is a commutative diagram of $2$-morphisms
\[
\xymatrix{
\bphi\circ m_\cH\circ (m_\cH\times \id) \ar[r]^{a_H} \ar[d]^{a} & \bphi
\circ m_\cH\circ
(\id\times m_\cH) \ar[d]^{a} \\  m_\cG\circ ( m_\cG \times \id) \circ
\bphi^{\times 3}
\ar[r]^{a_G}  & m_\cG\circ (\id\times m_\cG) \circ \bphi^{\times 3} ,}
\]
where by abuse of notation $a$ denotes the $2$-morphisms generated
by $a$ such as $a\odot (a\times \id)$;
\item \label{itm:iv2s} between maps $M\times_{M} \cH \to \cG$ and maps
$\cH\times_M M \to \cG$ there are commutative diagrams of
$2$-morphisms
\end{enumerate}
\end{enumerate}
\[
\xymatrix{\bphi \circ m_\cH\circ(\be_\cH \times \id) \ar[r]^-{b_l^H}
  \ar[d]^{a\odot b} & \bphi \circ \pr_2 \ar[d]^{\id} \\
 m_\cG \circ(\be_{\cG} \times \id) \circ (\id \times \bphi) \ar[r]^{b_l^G} & {\pr_2 \circ (\id \times \bphi) } } \quad \xymatrix{ m_\cH\circ (\id\times \be_\cH
 ) \ar[r]^-{b_r^H} \ar[d]^{a\odot b} &  \bphi \circ  \pr_1 \ar[d]^{\id} \\
 m_\cG \circ(\id\times \be_\cG) \circ (\bphi \times \id) \ar[r]^{b^G_r} & {\pr_2 \circ (\id \times \bphi)} .}
 \]
\end{lemma}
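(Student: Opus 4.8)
The plan is to read off everything from the single isomorphism that an equivalence forces in top degree. By Definition~\ref{defequivalence} with $n=2$, together with item~\ref{itm:n} (equation~\eqref{eq:ge-n}) of Lemma~\ref{lemma:proj-sub}, the map $f_2\colon Y_2\to\pb(\pD[2],Y,\D[2],X)=\hom(\pD[2],Y)\times_{\hom(\pD[2],X)}X_2$ is an isomorphism, while $f_1\colon Y_1\to X_1$ is a cover; since $X_0=Y_0=M$ the map $f_0$ is an isomorphism (this is the $1$-equivalence situation to which the lemma is applied in Theorem~\ref{thm:1-1}). Throughout I will use that $f$ is a strict simplicial map, so it commutes with all the face, degeneracy and $3$-multiplication maps of Prop-Def.~\ref{def:finite-2gpd}.

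For item~\ref{itm:i2s} I would restrict $f_2$ to bigons. Writing an element of $H_1=d_2^{-1}(s_0(Y_0))\subset Y_2$ through $f_2$, it becomes a boundary in $\hom(\pD[2],Y)$ whose $d_2$-edge is degenerate together with an $\eta_X\in X_2$ matching it; since $f$ commutes with degeneracies, $d_2(\eta_X)$ is degenerate, so $\eta_X\in G_1$, while the two surviving edges form a parallel pair in $H_0$. This identifies
\[
H_1\cong G_1\times_{(\bt_G,\bs_G),\,G_0\times_M G_0,\,f_1\times f_1}(H_0\times_M H_0),
\]
the pull-back groupoid of $G_1\rra G_0$ along $f_1\colon H_0=Y_1\to X_1=G_0$, with the groupoid structure inherited from the $3$-multiplications (Lemma~\ref{lemma:g1-g0}). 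As $f_1$ is a cover, this pull-back groupoid is Morita equivalent to $G_1\rra G_0$, and the Morita bibundle $P:=H_0\times_{f_1,G_0,\bt_G}G_1$ gives the isomorphism $\bphi\colon\cH\cong\cG$ of underlying stacks.

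For item~\ref{itm:ii2s} I would produce the comparison $2$-morphisms $a$ and $b$ from the bibundle pictures. By Theorem~\ref{2-to-slie}, $m_\cH$ and $m_\cG$ are presented by $Y_2$ and $X_2$, and $\bphi$ by $P$; the two composites $\bphi\circ m_\cH$ and $m_\cG\circ(\bphi\times\bphi)$ are therefore presented by bibundles assembled from $Y_2$, $X_2$ and $P$ over appropriate fibre products of $G_0$. The isomorphism $f_2$ above, combined with the compatibility of $f$ with the $3$-multiplications, supplies a canonical isomorphism between these two bibundles, which I take as $a$; restricting the same identification to the degenerate (identity) locus, where $s^1_0,s^1_1$ are intertwined by $f$, yields $b$.

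The hard part will be the coherences, items~\ref{itm:iii2s} and \ref{itm:iv2s}. I would argue as in the associativity proof of Section~\ref{sec:3-asso}: translating the bibundles back into horns $\hom(\L[3,j],-)$ and $\hom(\L[4,0],-)$, the square in \ref{itm:iii2s} becomes an instance of the cube (pentagon) identity~\ref{itm:a-higher}, with $a_G$ and $a_H$ the associators of Theorem~\ref{thm:slie-2} for $\cG$ and $\cH$ and $a$ the comparison $f_2$; its commutativity is then exactly the equality of the two ways of filling a $4$-simplex, which holds because $f$ respects $\Kan(4,0)$. The unit coherences in \ref{itm:iv2s} reduce in the same way to the relations~\eqref{coco} transported along $f$. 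The genuine difficulty is not conceptual but bookkeeping: one must track which copy of $G_1\times_M G_1$ acts on which leg of each composite bibundle and check that the relevant maps are equivariant and commute with the moment maps, so that the set-theoretic identifications descend to honest isomorphisms in $\cC$. This is where I expect essentially all of the work to lie, the representability and geometry having been isolated already in Lemma~\ref{lemma:proj-sub} and Theorem~\ref{2-to-slie}.
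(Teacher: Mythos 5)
Your proposal follows essentially the same route as the paper: item (i) via the top-degree isomorphism $Y_2\cong\hom(\pD[2],Y)\times_{\hom(\pD[2],X)}X_2$ restricted to bigons, giving $H_1$ the pull-back groupoid structure and $\bphi$ the bibundle $H_0\times_{f_1,G_0,\bt_G}G_1$; item (ii) by identifying the two composite bibundles through $f_2$ (the paper's $a\colon[(\eta,1)]\mapsto(d_2\eta,d_0\eta,f_2\eta)$); and items (iii), (iv) by reducing to the fact that $f$ strictly preserves the simplicial structure. Two caveats. First, in (iii2s) you point at the wrong identity: the naturality square for $a$ against $a_H$ and $a_G$ does \emph{not} reduce to the pentagon/cube coherence or to $\Kan(4,0)$ --- the paper's element chase shows it is equivalent to $f_2\circ m_0^Y=m_0^X\circ f_2^{\times 3}$, i.e.\ to $f$ commuting with the unique $\Kan!(3,j)$ fillers (a $3$-simplex statement), which holds because the $3$-multiplications of $Y$ are induced from those of $X$ via the pull-back description of $Y_2$; the $4$-simplex identity is never invoked here. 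Likewise (iv2s) reduces to $b_r^G\circ f_2=f_2\circ b_r^H$ (since $b_l=\id$ on both sides by construction), not directly to \eqref{coco}. Second, the ``bookkeeping'' you defer --- checking that $a$ is well defined on the quotients by the $H_1$ and $G_1\times_M G_1$ actions, is equivariant, and commutes with moment maps, and constructing $a^{-1}$ --- is precisely where the paper's proof does its real work, so a complete write-up must supply those verifications rather than gesture at them.
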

\begin{proof}
Since $Y_2\cong \hom(\partial \Delta^2, Y)\times_{\hom(\partial
\Delta^2, X)} X_2$, we have
\[
\begin{split}
H_1=d_2^{-1}(s_0(Y_0))&=d_2^{-1}(s_0(X_0)) \times_{d_1\times d_0,
X_1\times_M X_1 } Y_1\times_M Y_1 \\
&=G_1 \times_{\bt_G\times \bs_G, G_0\times_M G_0} H_0\times_M H_0 \\
&=G_1\times_{\bt_G\times \bs_G, G_0\times G_0} H_0\times H_0,
\end{split}
\]
where the last step follows from the facts that $(\bt_G \times
\bs_G)(G_1) \subset G_0\times_M G_0$ and that $f$ preserves simplicial
structures. The multiplication on $H_1$ (respectively $G_1$) is
given by $3$-multiplications on $Y_2$ (respectively $X_2$).
Therefore $H$ has the pull-back groupoid structure since $Y$ is
the pull-back of $X$. So item \ref{itm:i2s} is proven. We denote
by $\phi_i: H_i\to G_i$ the groupoid morphism. Here $\phi_0=f_1$, and
$\phi_1$ is a restriction of $f_2$.

To prove \ref{itm:ii2s}, we translate it into the following groupoid
diagram:
\[
\xymatrix{ & H_1\times_M H_1 \ar[d] \ar@<-1ex>[d] \ar[ld] & & E_{m_\cH}=Y_2 \ar[dll]\ar[drr] & & H_1 \ar[d] \ar@<-1ex>[d] \ar[ld]& \\
G_1\times_M G_1 \ar[d] \ar@<-1ex>[d] & H_0\times_M H_0  \ar[ld] & E_{m_\cG}= X_2 \ar[dll]\ar[drr] & & G_1 \ar[d] \ar@<-1ex>[d] & H_0  .\ar[ld] \\
G_0\times_M G_0  & & & & G_0 & }
\]
We need to show that the following compositions of  bibundles are
isomorphic:
\begin{equation}\label{eq:2-morp-Em}
\begin{split} &  ( Y_2\times_{G_0}G_1 ) /H_1 \mathrel{(=} \big( Y_2 \times_{H_0} H_0 \times_{G_0} G_1 \big) /H_1)
 \\
\overset{a}{\cong}\null  &H_0\times_M H_0\times_{G_0\times_M G_0} X_2
\mathrel{(=}\big( H_0 \times_M H_0 \times_{G_0\times_M G_0}  G_1\times_M G_1
\times_{G_0\times_M G_0} X_2\big) / G_1\times_M G_1) .
\end{split}
\end{equation}
By item \ref{itm:i2s}, any element in $(Y_2\times_{G_0}G_1)/H_1$
can be written as $[(\eta, 1)]$ with $\eta\in Y_2$, and we
construct $a$ by $[(\eta, 1)] \mapsto (d_2(\eta), d_0(\eta),
f_2(\eta))$. First of all, we need to show that $a$ is
well-defined. For this we only have to notice that any element in
$Y_2$ has the form 
$\eta =(\bareta, h_0, h_1, h_2)$ with $\bareta=f_2(\eta)\in X_2$
and $h_i=d_i(\eta)$, since $f_2$ preserves degeneracy maps. Also
$\gamma \in H_1$ can be written as $\gamma=( \bgamma,h_1, h'_1)$
with $\bgamma=f_2(\gamma) \in G_1$; then the $H_1$ action on $Y_2$
is induced in the following way,
\[ (\bareta, h_0, h_1, h_2) \cdot(
\bgamma,h_1, h'_1) = (\bareta \cdot \bgamma, h_0, h'_1, h_2) \]
where $h_i=d_i(\eta)$. Hence if $(\eta',1)=(\eta,1)\cdot(
\bgamma,h_1, h'_1)$, then $\bgamma=1$ and
$a([(\eta',1)])=(h_2,h_0,\bareta)=a([(\eta,1)]$. Given $(h_2, h_0,
\bareta) \in H_0\times_M H_0\times_{G_0\times_M G_0} X_2$, take
any $h_1$ such that $f_1(h_1)=d_1(\bareta)$; then $(h_0, h_1, h_2)
\in \hom(\partial\Delta^2, Y)$. Thus we construct $a^{-1}$ by
$(h_2, h_0, \bareta)\mapsto [((\bareta, h_0, h_1, h_2), 1)]$. By
the action of $H_1$, it is easy to see that $a^{-1}$ is also
well-defined. For the $2$-morphism $b$, the proof is much easier,
since in this case all the H.S. morphisms are strict morphisms of
groupoids. So we only have to use the commutative diagram
\[
\xymatrix{M\ar[r] \ar[d] & G_0 \\
H_0 \ar[ru]}
\]

Recall that the $3$-multiplications on $Y_2$ are induced from those
of $X_2$ in the following way:
\[ m^Y_0((\bareta_1, h_2, h_5, h_3), (\bareta_2, h_4, h_5, h_0), (\bareta_3, h_1, h_3, h_0))= (m^X_0(\bareta_1, \bareta_2, \bareta_3), h_2, h_4, h_1), \]
and similarly for other $m^Y$'s.
\[
\begin{xy}
*\xybox{(0,0);<3mm,0mm>:<0mm,3mm>::
  ,0
  ,{\xylattice{-5}{0}{-4}{0}}}="S"
  ,{(-10,-10)*{\bullet}}, {(-10, -12)*{_1}},
     ,{(0,0)*{\bullet}}, {(0, 2)*{^{0}}}, {(10, -10)*{\bullet}},
     {(10, -12)*{_2}}, {(15, -4)*{\bullet}}, {(17,-5)*{^3}},
     {(-10, -10) \ar@{->}^{h_0} (0,0)},
     { (10, -10) \ar@{->}^{h_1} (-10,-10)},
      { (10, -10) \ar@{->} (0,0)}, {(2, -5)*{^{h_3}}}, 
     {(15, -4)\ar@{->}^{h_2} (10, -10)},
     {(15, -4)\ar@{->}_{h_5} (0, 0)},
     {(15, -4)\ar@{.>} (-10,-10)}, {(-2,-7)*{^{h_4}}},
\end{xy} 
\]
Then we have a diagram of $2$-morphisms between composed bibundles
\[
\xymatrix{
{\big( (Y_2\times_M H_1 \times_{H_0^2} Y_2 / H_1^2)\times_{G_0}G_1 \big) / H_1} \ar[d]^{a} \ar[r]^{a_H} & {\big( (H_1\times_M Y_2 \times_{G_0^2} Y_2  \big) / H_1^2)\times_{G_0} G_1/ H_1} \ar[d]^{a} \\ \big( H_0^3\times_{G_0^3}(X_2 \times_{M}G_1 \times_{G_0^2} X_2)  \big) /G_1^2
\ar[r]^{a_G} &
\big( (H_0^3\times_{G_0^3}( G_1\times_M X_2 \times_{G_0^2} X_2 ) \big) /G_1^2
,}
\]which is
\[ \xymatrix{[(\eta_3, 1, \eta_2, 1)] \ar@{|->}[r]^{a_H} \ar@{|->}[d]^{a} & [(1, \eta_0, \eta_2, 1)] \ar@{|->}[d]^{a} \\
[(h_0, h_1, h_2, \bareta_3, 1, \bareta_1)]  \ar@{|.>}[r]^{a_G ?} &
[(h_0, h_1, h_2, 1, \bareta_0, \bareta_2)]}\] where
$\bareta_i=f_2(\eta_i)$. Here $\square^k$ denotes a suitable $k$-times fibre
product of $\square$ over $M$.  Notice that $f_2$ preserves the $3$-multiplications if and only if
$m_0(\bareta_1, \bareta_2, \bareta_3)=\bareta_0$. Since $a_G([\bareta_3, 1,
\bareta_1])=[(1, m_0(\bareta_1, \bareta_2, \bareta_3),
\bareta_2)]$, we conclude that $f_2$ preserves the $3$-multiplications  if and only
if the above diagram commutes. So \ref{itm:iii2s} is also proven.

Translating the right diagram in item \ref{itm:iv2s} into groupoid
language, we have
\begin{equation}\label{diag:a-bl-br}
\xymatrix@C=2cm{ \big( J_l^{-1}(H_0\times_M M) \times_{G_0} G_1
\big) / H_1 \ar[d]^{b^H_r} \ar[r]^{\text{restriction of
$a$}\;\;\;\;\;\;\;\;\;\;\;\;\;\;\;\;\;} &
H_0 \times_M M \times_{G_0 \times_M M} J_l^{-1}(G_0 \times_M M )/ G_1
\times_M M \ar[d]^{b^G_r} \\
(H_1\times_{G_0}G_1)/H_1 \ar[r]^{\phi_1} & H_0\times_M M
\times_{ G_0 \times_M M} G_1/G_1 \times_M M }
\end{equation}
where $J_l$ denotes the left moment map of $X_2$ or $Y_2$ to
$G_0\times_M G_0$ or $H_0\times_M H_0$. The maps are explicitly: $[(\eta, 1)]
\overset{a}{\mapsto} [(h_2, s_0(x), f_2(\eta))]
\overset{b^G_r}{\mapsto} [(h_2, s_0(x), b_r^G(f_2(\eta)))]$ and
$[(\eta, 1)] \overset{b_r^H}{\mapsto} [(b_r^H(\eta), 1)]
\overset{\phi_1}{\mapsto} [(h_2, s_0(x), \phi_1(b_r^H(\eta)))]$, where
$x=d_1(h_2)$. 
To show the commutativity of the diagram, we need to show that these two maps are the same;  that is, we need to show $b_r^G
(f_2(\eta))=f_2(b_r^H(\eta))$, since $\phi_1$ is a restriction of $f_2$. Since $b_r=\varphi^{-1}$ is
constructed by $m$'s as in Section \ref{sec:2-slie}, $f_2$
commutes with the $b_r$'s.  We have a similar diagram for the left diagram of
\ref{itm:iv2s}, which is trivially commutative since $b^{H,G}_l=\id$ by the
construction in Section \ref{sec:2-slie}. So we proved item \eqref{itm:iv2s}.
\end{proof}

To establish the inverse argument, we fix again the notation:
$\cG\rra M$ is a stacky groupoid object in $(\cC, \cT, \cT') $; $G_1\rra G_0$ and $H_1 \rra H_0$
are two groupoids  in $(\cC, \cT'') $ presenting $\cG$; $X$ and $Y$ are the
$2$-groupoids corresponding to $G$ and $H$ as constructed in Section
\ref{sec:slie-2}.

\begin{lemma}If there is a groupoid equivalence $\phi_i: H_i \to G_i$ in $(\cC, \cT'')$,
then there is a $2$-groupoid $1$-\equivalence\ $Y\to X$ in $(\cC, \cT'')$.
\end{lemma}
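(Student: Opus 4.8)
The plan is to build the required $1$-\equivalence\ $f\colon Y\to X$ directly from $\phi$ and to recognize, through the change-of-chart formula for the bibundle presenting $m$, that its top layer realizes exactly the pullback demanded by Definition \ref{defequivalence}. This is the converse situation to Lemma \ref{lemma:equi-2gpd-slie}, so I expect the verifications to mirror (and reverse) those given there. I will use that the relevant groupoid equivalence is of the type dual to item \ref{itm:i2s}: namely $\phi_0\colon H_0\thra G_0$ is a cover and $H$ is the pullback groupoid, so $H_1\cong G_1\times_{G_0\times_M G_0} H_0\times H_0$. Recalling the construction of Section \ref{sec:slie-2}, I have $X_0=Y_0=M$, $X_1=G_0$, $Y_1=H_0$, $X_2=E_m$, and $Y_2=E_m^H$, the two H.S.\ bibundles presenting the single morphism $m\colon\cG\times_M\cG\to\cG$ through the charts $G$ and $H$. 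I would set $f_0=\id_M$ and $f_1=\phi_0$.

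To produce $f_2$, I would invoke the standard dictionary between presentable stacks and groupoids (Section \ref{sect:sgpd}) applied to $m$: since $H=\phi_0^\ast G$, the bibundle presenting $m$ through $H$ is the pullback of the one presenting it through $G$, namely
\[
Y_2=E_m^H\;\cong\;(H_0\times_M H_0)\times_{\phi_0\times\phi_0,\,G_0\times_M G_0,\,J_l} E_m \times_{J_r,\,G_0,\,\phi_0} H_0 ,
\]
and I let $f_2\colon Y_2\to X_2$ be the projection onto the $E_m$-factor. By construction $f_2$ intertwines the moment maps, $J_l\circ f_2=(\phi_0\times\phi_0)\circ J_l^H$ and $J_r\circ f_2=\phi_0\circ J_r^H$, so $f$ commutes with the face maps $d^2_i$ of \eqref{eq:stru-maps}; commutation with $\bs,\bt$ is exactly that $\phi$ is a groupoid morphism, and commutation with $s^0_0,s^1_i$ follows since $\phi_1\circ e_H=e_G\circ\phi_0$ and the trivializations $b_l^H,b_r^H$ of $Y$ are the pullbacks of $b_l,b_r$. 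These checks are routine.

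The one nontrivial point is that $f$ respects the $3$-multiplications, that is $f_2\circ m_i^Y=m_i^X\circ\L(f)$ on $\L(Y)_{3,i}$, where $\L(f)$ is the map induced by $f_2$. This is the main obstacle. I would argue exactly as for item \ref{itm:iii2s} of Lemma \ref{lemma:equi-2gpd-slie}: by the construction of Section \ref{sec:slie-2} each $m_i$ is read off from the associator $2$-morphism $a$ of $\cG$ as presented through the ambient chart, and since $a^H$ is the pullback of $a^G$ along the refinement $\phi$, the square relating $a^H$ and $a^G$ commutes. Translating this commutativity back through the identification of the $m_i$'s with $a$ gives the desired equality; concretely, one tracks a single representative through both composites, just as in \eqref{eq:5gon-g}.

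Finally I would verify that the strict map $f$ is a $1$-\equivalence. Since $f_0=\id$ is an isomorphism, by Remark \ref{rk:1t-m-equi} it suffices to see that $f_1$ is a cover and that $Y_2\to\pb(\pD[2],Y,\D[2],X)$ is an isomorphism. The former is $\phi_0\colon H_0\thra G_0$, a cover by hypothesis. For the latter, using $f_0=\id$ one computes
\[
\pb(\pD[2],Y,\D[2],X)=\hom(\partial\Delta^2,Y)\times_{\hom(\partial\Delta^2,X)} X_2
=(H_0\times_M H_0)\times_{G_0\times_M G_0} E_m \times_{G_0} H_0 ,
\]
where the three fibre conditions match $\phi_0$ of the three edges in $H_0$ against the three edges $d^2_2,d^2_0,d^2_1$ of an element of $E_m$. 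This is precisely $E_m^H=Y_2$, and the natural map is the identity under this identification. Hence $f$ is a $1$-\equivalence, as claimed.
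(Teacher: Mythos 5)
Your proposal is correct and follows essentially the same route as the paper: both take $f_0=\id$, $f_1=\phi_0$, obtain $f_2$ from the identification of $Y_2$ with $(H_0\times_M H_0)\times_{G_0\times_M G_0}X_2\times_{G_0}H_0$ (your ``change-of-chart formula'' is exactly the $2$-morphism $a$ of Lemma \ref{lemma:equi-2gpd-slie}, item \ref{itm:ii2s}), defer compatibility with the $3$-multiplications to item \ref{itm:iii2s} of that lemma, and read off the $1$-\equivalence\ conditions from this identification. The only cosmetic difference is that you verify the degree-$2$ isomorphism condition by inspecting the pull-back formula, where the paper runs an explicit injectivity/surjectivity argument.
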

\begin{proof}
Since both $H$ and $G$ present $\cG$, which is a stacky groupoid
over $M$, we are in the situation described in item \ref{itm:algd-morp} in Lemma
\ref{lemma:equi-2gpd-slie}; that is, we have a $2$-morphism $a$
satisfying various commutative diagrams as in items
\ref{itm:ii2s}, \ref{itm:iii2s}, \ref{itm:iv2s}. We take
$f_0$ to be the isomorphism $M\cong M$, $f_1$ the map $\phi_0$, $f_2: Y_2\to X_2$ the map $\eta \mapsto [(\eta, 1)]
\overset{a}{\mapsto} (h_2, h_0, \bareta) \mapsto \bareta$ (see \eqref{eq:2-morp-Em}). Since
$f_2$ is made up of composition of morphisms, it is a morphism in $\cC$. Since $d_2\times d_0$ is the moment map and $a$ preserves the
moment map, we have $h_i=d_i(\eta)$ for $i=0,2$. It is clear that
$f_0$ and $f_1$ preserve the structure maps since they preserve
$\be, \bbs, \bbt$ of $\cG\rra M$. It is also clear that $d_i
f_2(\eta)=f_1(h_i)$ for $i=2,0$ since $(h_2, h_0,
\bareta=f_2(\eta)) \in H_0^2\times_{G_0^2} X_2$. Since $a$
preserves moment maps, $f_1(d_1(\eta))=J_r([(\eta,
1)])=J_r(h_2,h_0,\bareta)=d_1 (\bareta)$, where $J_r$ is the
moment map to $G_0$ of the corresponding bibundles. Hence $f_2$
preserves the degeneracy maps.

For the face maps $s^1_0, s^1_1: \square_1\to \square_2$, we recall
that $s^1_1(h)=(b^H_r)^{-1} e_H (h)$. Using  the commutative diagram
\eqref{diag:a-bl-br}, by the definition of $f_2$ and the fact that $\phi_1
e_H=e_G \phi_0$,  we have
\[ f_2(s^1_1(h))= \pr_X a([((b_r^H)^{-1} e_H(h), 1)]) = (b_r^G)^{-1} \phi_1 e_H
(h) = (b_r^G)^{-1} e_G \phi_0(h) = s^1_1 f_1(h), \]where $\pr_X$ is the natural map
$H_0^2\times_{G_0^2} X_2 \to X_2$.  We treat $s^1_0$ similarly using
the diagram for $b_l$. Hence $f_2$ preserves the face maps.

The fact that $f_2$ preserves the $3$-multiplications follows from
the proof of item \eqref{itm:iii2s} of Lemma
\ref{lemma:equi-2gpd-slie}.

Then the induced map $\phi: Y_2\to \hom(\partial \Delta^2, Y)
\times_{\hom(\partial \Delta^2, X)} X_2$ is $\eta \mapsto
[(\eta,1)]\overset{a}{\mapsto} (h_2, h_0, \bareta) \mapsto (h_0,
h_1, h_2, \bareta)$, where $\bareta=f_2(\eta)$ and
$h_i=d_i(\eta)$. As a composition of morphisms, $\phi$ is a morphism in
$\cC$.
Moreover $\phi$ is injective since $a$ is injective. For any
$(h_0, h_1, h_2,\bareta)\in \hom(\partial\Delta^2, Y)
\times_{\hom(\partial\Delta^2, X)} Y_2$, we have $(h_0, h_2,
\bareta)\in H_0\times_M H_0\times_{G_0\times_M G_0} X_2$. Then
there is an $\eta$ such that $[(\eta, 1)]=a^{-1}(h_0, h_2,
\bareta)$. Thus $\phi (\eta)=(h_0, d_1(\eta), h_2, \bareta) $,
which implies that $f_1 (d_1(\eta))=d_1(\bareta)=f_1(h_1)$.
Therefore $(1, d_1(\eta), h_1)\in H_1$ and $d_i(\eta \cdot (1,
d_1(\eta),h_1))=h_i$, $i=0,1,2$, since $d_1$ is the moment map to
$H_0$ of the bibundle $Y_2$. So $\phi(\eta \cdot (1, d_1(\eta),
h_1))= (\bareta, h_0, h_1, h_2)$, which shows the surjectivity.
Therefore $\phi$ is an isomorphism.
\end{proof}

The theorem is now proven, since we only have to consider the case
when ($1$-) Morita equivalence is given by strict ($2$-) groupoid
morphisms.

\def\cprime{$'$} \def\cprime{$'$} \def\cprime{$'$}


\begin{thebibliography}{10}

\bibitem{SGA4}
{\em Th\'eorie des topos et cohomologie \'etale des sch\'emas. {T}ome 1: {T}h\'eorie des topos}.
\newblock Lecture Notes in Mathematics, Vol. 269. Springer-Verlag, Berlin, 1972.
\newblock S{\'e}minaire de G{\'e}om{\'e}trie Alg{\'e}brique du Bois-Marie 1963--1964 (SGA 4), Dirig{\'e} par M. Artin, A. Grothendieck, et J. L. Verdier. Avec la collaboration de N. Bourbaki, P. Deligne et B. Saint-Donat.

\bibitem{am1}
R.~Almeida and P.~Molino.
\newblock Suites d'{A}tiyah, feuilletages et quantification g\'eom\`etrique.
\newblock {\em Universit\'e des Sciences et Techniques de Languedoc, Montpellier, S\'eminaire de g\'eom\`etrie diff\'erentielle}, pages 39--59, 1984.

\bibitem{am2}
R.~Almeida and P.~Molino.
\newblock Suites d'{A}tiyah et feuilletages transversalement complets.
\newblock {\em C. R. Acad. Sci. Paris S\'er. I Math.}, 300(1):13--15, 1985.

\bibitem{baez:str-gp}
J.~C. Baez, A.~S. Crans, D.~Stevenson, and U.~Schreiber.
\newblock {From Loop Groups to 2-Groups}, arxiv:math.QA/0504123.

\bibitem{bala:2gp}
J.~C. Baez and A.~D. Lauda.
\newblock Higher-dimensional algebra. {V}. 2-groups.
\newblock {\em Theory Appl. Categ.}, 12:423--491 (electronic), 2004.

\bibitem{bx1}
K.~Behrend and P.~Xu.
\newblock {Differentiable Stacks and Gerbes}, arxiv:math.DG/0605694.

\bibitem{breen}
L.~Breen.
\newblock Bitorseurs et cohomologie non ab\'elienne.
\newblock In {\em The Grothendieck Festschrift, Vol.\ I}, volume~86 of {\em Progr. Math.}, pages 401--476. Birkh\"auser Boston, Boston, MA, 1990.

\bibitem{br-sp}
R.~Brown and C.~B. Spencer.
\newblock Double groupoids and crossed modules.
\newblock {\em Cahiers Topologie G\'eom. Diff\'erentielle}, 17(4):343--362, 1976.

\bibitem{bry-mc1}
J.-L. Brylinski and D.~A. McLaughlin.
\newblock The geometry of degree-four characteristic classes and of line bundles on loop spaces. {I}.
\newblock {\em Duke Math. J.}, 75(3):603--638, 1994.

\bibitem{bz}
H.~Bursztyn and C.~Zhu.
\newblock Morita equivalence of weinstein groupoids.
\newblock in preparation.

\bibitem{cw}
A.~Cannas~da Silva and A.~Weinstein.
\newblock {\em Geometric models for noncommutative algebras}, volume~10 of {\em Berkeley Mathematics Lecture Notes}.
\newblock American Mathematical Society, Providence, RI, 1999.

\bibitem{cafe}
A.~S. Cattaneo and G.~Felder.
\newblock Poisson sigma models and symplectic groupoids.
\newblock In {\em Quantization of singular symplectic quotients}, volume 198 of {\em Progr. Math.}, pages 61--93. Birkh\"auser, Basel, 2001.

\bibitem{cf}
M.~Crainic and R.~L. Fernandes.
\newblock Integrability of {L}ie brackets.
\newblock {\em Ann. of Math. (2)}, 157(2):575--620, 2003.

\bibitem{duskin}
J.~Duskin.
\newblock Higher-dimensional torsors and the cohomology of topoi: the abelian theory.
\newblock In {\em Applications of sheaves (Proc. Res. Sympos. Appl. Sheaf Theory to Logic, Algebra and Anal., Univ. Durham, Durham, 1977)}, volume 753 of {\em Lecture Notes in Math.}, pages 255--279. Springer, Berlin, 1979.

\bibitem{duskin2}
J.~W. Duskin.
\newblock Simplicial matrices and the nerves of weak {$n$}-categories. {I}. {N}erves of bicategories.
\newblock {\em Theory Appl. Categ.}, 9:198--308 (electronic), 2001/02.
\newblock CT2000 Conference (Como).

\bibitem{friedlander}
E.~M. Friedlander.
\newblock {\em \'{E}tale homotopy of simplicial schemes}, volume 104 of {\em Annals of Mathematics Studies}.
\newblock Princeton University Press, Princeton, N.J., 1982.

\bibitem{gep-hen}
D.~Gepner and A.~Henriques.
\newblock {Homotopy Theory of Orbispaces}, arXiv:math/0701916v1 [math.AT].

\bibitem{getzler}
E.~Getzler.
\newblock {Lie theory for nilpotent L-infinity algebras}, arxiv:math.AT/0404003.

\bibitem{glenn}
P.~G. Glenn.
\newblock Realization of cohomology classes in arbitrary exact categories.
\newblock {\em J. Pure Appl. Algebra}, 25(1):33--105, 1982.

\bibitem{henriques:l-infty-v1}
A.~Henriques.
\newblock {Integrating L-infinity algebras}, arXiv:math.AT/0603563v1.

\bibitem{henriques}
A.~Henriques.
\newblock Integrating {$L\sb \infty$}-algebras.
\newblock {\em Compos. Math.}, 144(4):1017--1045, 2008.

\bibitem{lmb}
G.~Laumon and L.~Moret-Bailly.
\newblock {\em Champs alg\'ebriques}.
\newblock Springer-Verlag, Berlin, 2000.

\bibitem{maclane:cat-math}
S.~MacLane.
\newblock {\em Categories for the working mathematician}.
\newblock Springer-Verlag, New York, 1971.
\newblock Graduate Texts in Mathematics, Vol. 5.

\bibitem{metzler}
D.~Metzler.
\newblock {Topological and smooth stacks}, arxiv:math.DG/0306176.

\bibitem{mw}
K.~Mikami and A.~Weinstein.
\newblock Moments and reduction for symplectic groupoids.
\newblock {\em Publ. Res. Inst. Math. Sci.}, 24(1):121--140, 1988.

\bibitem{m:bibundle}
I.~Moerdijk.
\newblock Toposes and groupoids.
\newblock In {\em Categorical algebra and its applications (Louvain-La-Neuve, 1987)}, volume 1348 of {\em Lecture Notes in Math.}, pages 280--298. Springer, Berlin, 1988.

\bibitem{m-orbi}
I.~Moerdijk.
\newblock Orbifolds as groupoids: an introduction.
\newblock In {\em Orbifolds in mathematics and physics (Madison, WI, 2001)}, volume 310 of {\em Contemp. Math.}, pages 205--222. Amer. Math. Soc., Providence, RI, 2002.

\bibitem{hs}
J.~Mr{\v{c}}un.
\newblock {\em Stablility and invariants of {H}ilsum-{S}kandalis maps}.
\newblock Dissertation, Utrecht University, Utrecht, 1996.

\bibitem{noohi:top}
B.~Noohi.
\newblock {Foundations of Topological Stacks I}, arXiv:math.AG/0503247.

\bibitem{noohi}
B.~Noohi.
\newblock Notes on 2-groupoids, 2-groups and crossed modules.
\newblock {\em Homology, Homotopy Appl.}, 9(1):75--106 (electronic), 2007.

\bibitem{pronk}
D.~A. Pronk.
\newblock Etendues and stacks as bicategories of fractions.
\newblock {\em Compositio Math.}, 102(3):243--303, 1996.

\bibitem{quillen:ha}
D.~G. Quillen.
\newblock {\em Homotopical algebra}.
\newblock Lecture Notes in Mathematics, No. 43. Springer-Verlag, Berlin, 1967.

\bibitem{Rogers-Zhu}
C.~L. Rogers and C.~Zhu.
\newblock On the homotopy theory for {L}ie {$\infty$}-groupoids, with an application to integrating {$L_\infty$}-algebras.
\newblock {\em Algebr. Geom. Topol.}, 20(3):1127--1219, 2020.

\bibitem{stolz}
S.~Stolz.
\newblock A conjecture concerning positive {R}icci curvature and the {W}itten genus.
\newblock {\em Math. Ann.}, 304(4):785--800, 1996.

\bibitem{st}
S.~Stolz and P.~Teichner.
\newblock What is an elliptic object?
\newblock In {\em Topology, geometry and quantum field theory}, volume 308 of {\em London Math. Soc. Lecture Note Ser.}, pages 247--343. Cambridge Univ. Press, Cambridge, 2004.

\bibitem{twz}
X.~Tang, A.~Weinstein, and C.~Zhu.
\newblock Hopfish algebras.
\newblock {\em Pacific J. Math.}, 231(1):193--216, 2007.

\bibitem{tz}
H.-H. Tseng and C.~Zhu.
\newblock Integrating {L}ie algebroids via stacks.
\newblock {\em Compos. Math.}, 142(1):251--270, 2006.

\bibitem{tz2}
H.-H. Tseng and C.~Zhu.
\newblock Integrating {P}oisson manifolds via stacks.
\newblock {\em Travaux math\'ematique}, 15:285--297, 2006.

\bibitem{v1}
A.~Vistoli.
\newblock Intersection theory on algebraic stacks and on their moduli spaces.
\newblock {\em Invent. Math.}, 97(3):613--670, 1989.

\bibitem{s:funny}
P.~\v{S}evera.
\newblock {Some title containing the words ``homotopy'' and ``symplectic'', e.g. this one}, arXiv:math.SG/0105080.

\bibitem{w-poisson}
A.~Weinstein.
\newblock The local structure of {P}oisson manifolds.
\newblock {\em J. Differential Geom.}, 18(3):523--557, 1983.

\bibitem{wx}
A.~Weinstein and P.~Xu.
\newblock Extensions of symplectic groupoids and quantization.
\newblock {\em J. Reine Angew. Math.}, 417:159--189, 1991.

\bibitem{z:lie2}
C.~Zhu.
\newblock {Lie II theorem for Lie algebroids via stacky Lie groupoids}, arxiv:math/0701024 [math.DG].

\end{thebibliography}

\end{document}